\def\cl@chapter{}
\newtheorem{theorem}{Theorem}
\newtheorem{definition}{Definition}
\newtheorem{proposition}{Proposition}
\newtheorem{lemma}{Lemma}
\newtheorem{remark}{Remark}
\newtheorem{example}{Example}
\newtheorem{exercise}{Exercise}
\Crefname{enumi}{}{}
\setlist[enumerate,1]{label=(\roman*)}
\crefname{exercise}{Exercise}{Exercises}
\newcommand{\qede}{\hspace*{\fill}$\Diamond$\medskip}
\newcommand*{\doi}[1]{DOI: \href{http://dx.doi.org/#1}{#1}}
\newcommand{\Fix}{\operatorname{Fix}}
\newcommand{\Id}{\operatorname{Id}}
\newcommand{\tto}{\rightrightarrows}
\newcommand{\wto}{\rightharpoonup}
\DeclareMathAlphabet\mbc{OMS}{cmsy}{b}{n}
\newcommand{\Hi}{\mathcal{H}}
\newcommand{\Eu}{E}
\newcommand{\R}{\mathbb{R}}
\newcommand{\B}{\mathbb{B}}
\newcommand{\Rnn}{\mathbb{R}^{n\times n}}
\newcommand{\bin}{\{0,1\}}
\newcommand{\Bnn}{\bin^{n\times n}}
\newcommand{\bs}{\boldsymbol}
\begin{document}

\title{\textbf{The Douglas--Rachford Algorithm for Convex and Nonconvex Feasibility Problems}}

\author{Francisco J. Arag\'on Artacho\thanks{Department of Mathematics, University of Alicante, \textsc{Spain}. Email:~\href{mailto:francisco.aragon@ua.es}{francisco.aragon@ua.es}}
	   \and
	   Rub\'en Campoy\thanks{Department of Mathematics, University of Alicante, \textsc{Spain}.
	   	                     Email:~\href{mailto:ruben.campoy@ua.es}{ruben.campoy@ua.es}}
	   \and
	   Matthew K. Tam\thanks{Inst.\ for Num.\ and Appl.\ Math.,
	   	                     University of G\"ottingen, \textsc{Germany}.
	   	                     Email:~\href{mailto:m.tam@math.uni-goettingen.de}{m.tam@math.uni-goettingen.de}}}

\maketitle

\begin{abstract}
  The Douglas--Rachford method, a projection algorithm designed to solve continuous optimization problems, forms the basis of a useful heuristic for solving combinatorial optimization problems. In order to successfully use the method, it is necessary to formulate the problem at hand as a feasibility problem with constraint sets having efficiently computable nearest points. In this self-contained tutorial, we develop the convergence theory of projection algorithms within the framework of fixed point iterations, explain how to devise useful feasibility problem formulations, and demonstrate the application of the Douglas--Rachford method to said formulations. The paradigm is then illustrated on two concrete problems: a generalization of the ``eight queens puzzle'' known as the ``$(m,n)$-queens problem'', and the problem of constructing a probability distribution with prescribed moments.

\paragraph*{Keywords} {Projection methods $\cdot$ Douglas--Rachford $\cdot$ Feasibility problem $\cdot$ Eight queens problem}
\paragraph*{MSC2010}{65K05 
          $\cdot$
          90C27 
          $\cdot$
          90-01 
          $\cdot$
          65-01 
         }

\end{abstract}

\section{Introduction}
The so-called \emph{feasibility problem} asks for a point contained in the intersection of a finite collection of constraint sets. Precisely, given a family of sets $C_1,C_2,\ldots,C_r$ contained in a Euclidean space $E$, the corresponding {feasibility problem} takes the form
\begin{equation}\label{eq:FeasibilityProblem}
\text{Find } x\in C:= \bigcap_{i=1}^r C_i.
\end{equation}
A feasibility problem is said to be \emph{consistent} when it has solution (i.e.\ when $\cap_{i=1}^r C_i\neq\emptyset$), otherwise it is said to be \emph{inconsistent}.
This seemingly simple problem provides a modeling framework with great flexibility and power. For the purpose of numerical schemes however, devising computationally tractable formulations is often a nontrivial task and some creativity is required.

In many situations of practical interest, it is difficult to find a point in $C$ directly. On the other hand, the individual constraint sets, $C_i$, can often be chosen to have relatively simple structure. \emph{Projection methods} are a family of iterative algorithms for solving  \eqref{eq:FeasibilityProblem} which aim to exploit this observation with a solution to the problem being obtained in the limit. In this context, ``simple'' is understood in the sense of easy-to-compute \emph{projection mappings}.

\begin{definition}[Projection mapping]\label{def:proj}
Given a nonempty subset $C\subseteq\Eu$, the \emph{projection mapping} (or \emph{projector}) onto $C$ is the possibly set-valued operator, $P_C:\Eu\tto C$, defined at each $x\in\Eu$ by
\begin{equation*}
P_C(x):=\left\{p\in C : \|x-p\|=d_C(x):=\inf_{c\in C}\|c-x\|\right\}.
\end{equation*}
\end{definition}

It is a straightforward exercise to verify that the projection mapping $P_C$ has nonempty values (i.e.\ $P_C(x)\neq\emptyset$ for all $x\in\Eu$) when $C$ is nonempty and closed. If $P_C(x)$ is a singleton set for all $x\in\Eu$, then $C$ is said to be \emph{Chebyshev}. In this case, the projector $P_C$ is a single-valued mapping that sends each point $x\in\Eu$ to its unique nearest point in $C$. Amongst other names, the projector is sometimes also called the \emph{projection operator}, the \emph{metric projection}, \emph{nearest point mapping} or the \emph{best approximation operator}.

\begin{exercise}
  Let $C\subseteq\Eu$ be a nonempty, closed set. Show that $P_C(x)\neq\emptyset$ for all $x\in\Eu$.
\end{exercise}	

The simplest projection algorithm for solving \eqref{eq:FeasibilityProblem} is the \emph{method of cyclic projections} which iterates by successively applying the projectors onto each of the constraint sets. More precisely, given an initial point $x_0\in\Eu$, it generates a sequence ${(x_k)}_{k=0}^\infty$ according to
\begin{equation}\label{eq:AP_iteration}
x_{k+1} \in \left(P_{C_r} P_{C_{r-1}} \cdots  P_{C_1}\right)(x_k),\quad\text{for } k=0,1,2,\ldots,
\end{equation}
where we note that, for an operator $T$ and a set $C$, $T(C)$ is defined by $T(C):=\cup_{x\in C}T(x)$.

In the case that the sets $C_1,\dots,C_r$ are Chebyshev, \eqref{eq:AP_iteration} can be written with equality. However, in general, the inclusion is required due to potential set-valuedness of the projection mapping. In such cases, we will be interested in any sequence ${(x_k)}_{k=0}^\infty$ satisfying \eqref{eq:AP_iteration}. Two illustrations of the method, where the limit of ${(x_k)}_{k=0}^\infty$ is contained in the intersection, are provided in \Cref{fig:APscenarios}.

\begin{figure}[ht!]
\centering
\subfigure[The method of cyclic projections for two halfspaces]{\includegraphics[width=0.45\textwidth]{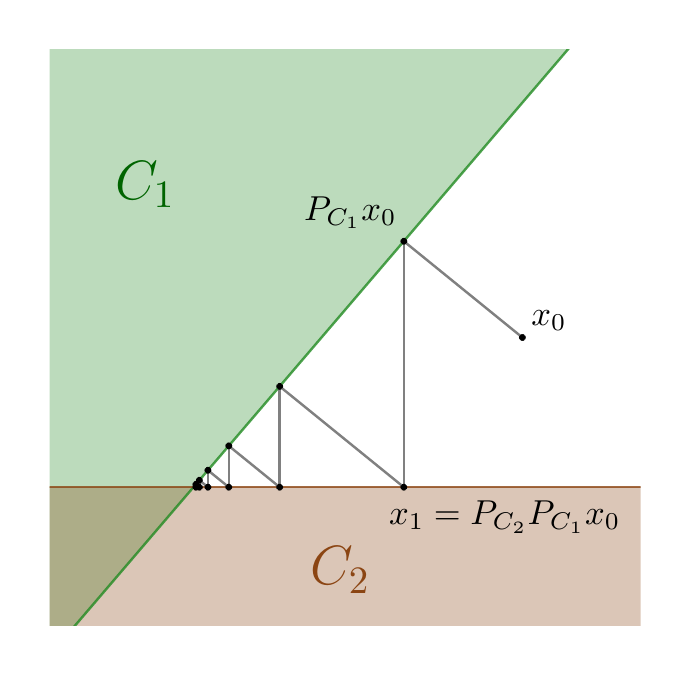}}\hspace{0.05\textwidth}
\subfigure[The method of cyclic projections for three balls]{\includegraphics[width=0.45\textwidth]{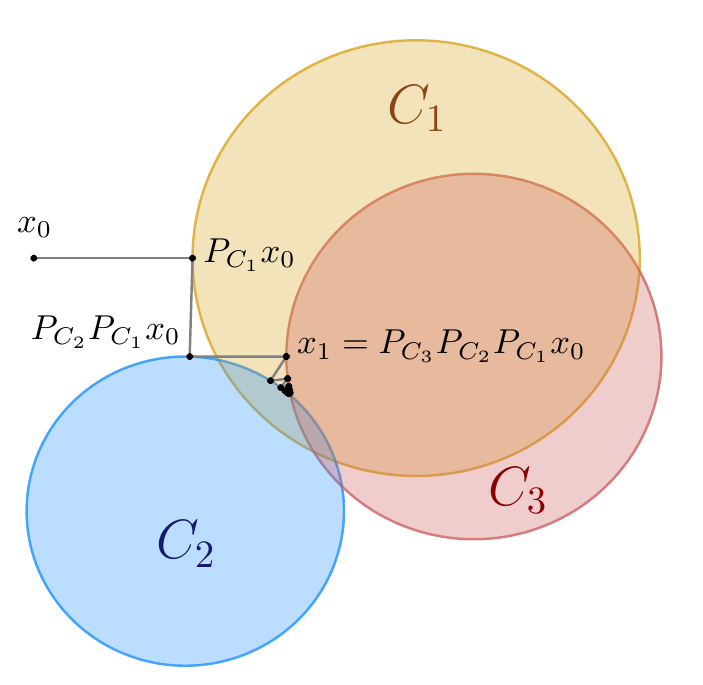}}
\caption{The method of cyclic projections applied to families of closed and convex sets with nonempty intersections.}\label{fig:APscenarios}
\end{figure}

The origins of the method of cyclic projections can be traced back at least to the work of \cite{VN50}, who proved convergence of  \eqref{eq:AP_iteration} when $r=2$ and the sets are subspaces. \cite{H62} subsequently showed that the result remains true for $r>2$. \cite{Breg65} established convergence of the method when the constraints are closed and convex sets.

The following familiar example illustrates the potential advantages of projection methods and the feasibility problem paradigm.
\begin{example}[Linear systems]\label{example:linear_systems}
Let $A:\mathbb{R}^n\to\mathbb{R}^m$ be a linear map and $b\in\mathbb{R}^m$. Consider the problem of solving the linear equation $Ax=b$.
By letting $a_i^T$ denote the $i$th row of the matrix $A$, this can be expressed as
   $$ \begin{pmatrix} a_1^Tx \\ \vdots \\ a_m^Tx \\ \end{pmatrix} = \begin{pmatrix} a_1^T \\  \vdots \\ a_m^T \\ \end{pmatrix}x = Ax = b = \begin{pmatrix} b_1 \\  \vdots \\ b_m \end{pmatrix}. $$
In other words, $Ax=b$ if and only if
    \begin{equation}
    \label{eq:linear_system}
    x\in\bigcap_{i=1}^mC_i\text{~~\color{black}where~~}C_i:=\left\{x\in\mathbb{R}^n:a_i^Tx=b_i\right\}.
    \end{equation}
The projector onto the hyperplane $C_i$ is given by
\begin{equation}\label{eq:proj_hyperplane}
P_{C_i}(x) = x + \left(b_i-a_i^Tx\right)\frac{a_i}{\|a_i\|^2},
\end{equation}
which can be easily computed using only vector arithmetic. The well-known \emph{Kaczmarz's method} (\cite{K37}) is precisely the method of cyclic projections applied to \eqref{eq:linear_system}.

One utility of viewing this problem as a feasibility problem is that additional constraints can be easily incorporated without changing the algorithm used. For instance, a nonnegative solution to the linear system $Ax=b$ can be found by augmenting \eqref{eq:linear_system} with the constraint
  $ C_{m+1} := \{x\in\mathbb{R}^n:x\geq 0\}, $
whose projection is given by $P_{C_{m+1}}(x)=\max\{0,x\}$ (understood in the pointwise sense).

Another possible variant of the problem involves considering the linear inequality system $Ax\leq b$. This too can be cast in terms of the feasibility problem \eqref{eq:linear_system} with constraint sets $C_i:=\{x\in\mathbb{R}^n:a_i^Tx\leq b_i\}$. The projection onto these sets are given by
\begin{equation}\label{eq:proj_halfspace}
P_{C_i}(x) = x + \min\left\{0,b_i-a_i^Tx\right\}\frac{a_i}{\|a_i\|^2},
\end{equation}
which also only requires vector arithmetic.\qede
\end{example}

This manuscript is intended to be a condensed tutorial on the Douglas--Rachford algorithm for solving convex and nonconvex feasibility problems for the unacquainted. For general reviews on projection algorithms, we refer the reader to~\cite{BC17,C12,C84,CC15,D01}. The remainder of this tutorial is organized as follows. In~\Cref{sec:Preliminaries}, we discuss mathematical preliminaries including general properties of nonexpansive operators, fixed point iterations and projection mappings. In~\Cref{sec:ProjAlg}, we use these preliminaries to analyze the convergence behavior of numerous projection methods including the Douglas--Rachford algorithm. In~\Cref{sec:Nonconvex}, we apply the Douglas--Rachford method to solve a combinatorial problem known as the \emph{$(m,n)$-queens puzzle}. For simplicity, this tutorial mainly focuses on the finite-dimensional setting, although appropriately modified, most of the results presented remain true in infinite dimensions. Indeed, the technical difficulties associated with the infinite-dimensional setting are explained in \Cref{sec:infinite}, where the problem of constructing a probability distribution with prescribed moments is also discussed. We finish with some conclusions and open questions
in~\Cref{sec:conslusions}.

\section{Preliminaries}\label{sec:Preliminaries}

Throughout this paper, $\Eu$ denotes a \emph{Euclidean space} equipped with inner product~$\langle\cdot,\cdot\rangle$ and induced norm~${\|\cdot\|}$.
Given a non-empty set $D\subseteq\Eu$, $T:D\tto\Eu$ denotes a \emph{set-valued operator} that maps a point in $D$ to a subset of $\Eu$ (i.e.\ $T(x)\subseteq \Eu$ for all $x\in D$). In the case when $T(x)=\{u\}$ for all $x\in D$, $T$ is said to be a \emph{single-valued mapping}, which is denoted as $T:D\to\Eu$. In an abuse of notation, we write $T(x)=u$ whenever $T(x)=\{u\}$. The set of \emph{fixed points} of an operator $T$, denoted by $\Fix T$, is given by
$$\Fix T:=\left\{x\in\Eu \mid x\in T(x)\right\}.$$
The \emph{identity operator} is the mapping $\Id:\Eu\to\Eu$ that maps every point to itself.

\subsection{Convergence of fixed point algorithms}
The mathematical analysis of projection algorithms can be performed within the framework of \emph{fixed-point theory}. In this section, we establish general results concerning convergence of \emph{Banach--Picard iterations}, which we will later specialize to projection algorithms.

We take the following abstract representation of an iterative algorithm: Given a mapping $T:\Eu\tto\Eu$ and a point $x_0\in\Eu$, we consider the scheme defined by
\begin{equation}\label{eq:Picard_iteration}
x_{k+1}\in T(x_k),  \quad \text{for } k=0,1,2,\ldots.
\end{equation}
We refer to \eqref{eq:Picard_iteration} as the \emph{fixed point iteration} or \emph{Banach--Picard iteration} defined by $T$. Within this setting, the operator $T$ is an abstract representation of the algorithm and its fixed points are assumed to provide solutions to the problem at hand. A classical result due to \cite{banach1922operations}, known as \emph{Banach's contraction principle}, is perhaps the most well-known criteria for ensuring convergence of \eqref{eq:Picard_iteration}.

 \begin{theorem}[Banach's contraction principle]\label{th:banach contraction}
 Let $T\colon \Eu\to \Eu$ be a contraction mapping, i.e., there exists $\kappa\in[0,1[$ such that
 \begin{equation}
 \label{eq:contraction}
 \|T(x)-T(y)\|\leq\kappa\|x-y\|\quad\forall x,y\in\Eu.
 \end{equation}
 Then $T$ has exactly one fixed point $x^\star\in\Eu$. Furthermore, for any $x_0\in X$ the sequence $x_{k+1}=T(x_k)$ converges to $x^\star$ with the linear rate $\kappa$; i.e., $\|x_k-x^\star\|\leq \kappa^k \|x_0-x^\star\|$ for all $k=0,1,2,\ldots$.
 \end{theorem}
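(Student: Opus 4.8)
The plan is to establish the three assertions in the order: uniqueness of a fixed point, then existence together with convergence of the iterates, and finally the quantitative rate. Uniqueness is immediate from the contraction inequality~\eqref{eq:contraction}: if $x^\star$ and $y^\star$ both satisfy $T(x^\star)=x^\star$ and $T(y^\star)=y^\star$, then
\[
\|x^\star-y^\star\|=\|T(x^\star)-T(y^\star)\|\leq\kappa\|x^\star-y^\star\|,
\]
and since $\kappa<1$ this forces $\|x^\star-y^\star\|=0$, hence $x^\star=y^\star$. This step is routine and carries no difficulty.

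For existence and convergence, I would fix an arbitrary starting point $x_0\in\Eu$ and consider the Banach--Picard iterates $x_{k+1}=T(x_k)$. The key estimate is obtained by applying~\eqref{eq:contraction} to consecutive iterates and iterating: $\|x_{k+1}-x_k\|\leq\kappa\|x_k-x_{k-1}\|\leq\cdots\leq\kappa^k\|x_1-x_0\|$. Summing a geometric series then bounds the distance between arbitrary iterates: for $m>n$,
\[
\|x_m-x_n\|\leq\sum_{j=n}^{m-1}\|x_{j+1}-x_j\|\leq\|x_1-x_0\|\sum_{j=n}^{m-1}\kappa^j\leq\frac{\kappa^n}{1-\kappa}\|x_1-x_0\|.
\]
Because $\kappa<1$, the right-hand side tends to $0$ as $n\to\infty$, so $(x_k)$ is a Cauchy sequence. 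Since $\Eu$ is a Euclidean space, hence complete, the sequence converges to some limit $x^\star\in\Eu$. The contraction property makes $T$ Lipschitz continuous, so passing to the limit in $x_{k+1}=T(x_k)$ yields $x^\star=T(x^\star)$, establishing that $x^\star$ is a fixed point.

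The quantitative rate is then a short induction using that $x^\star$ is a fixed point: since $x_k-x^\star=T(x_{k-1})-T(x^\star)$, inequality~\eqref{eq:contraction} gives $\|x_k-x^\star\|\leq\kappa\|x_{k-1}-x^\star\|$, and iterating down to $k=0$ produces $\|x_k-x^\star\|\leq\kappa^k\|x_0-x^\star\|$. The main conceptual step is recognising that the geometric decay of consecutive differences forces the Cauchy property; everything else is bookkeeping. The only place completeness of the underlying space is genuinely used is in extracting the limit $x^\star$, and this is automatic here since $\Eu$ is finite-dimensional Euclidean.
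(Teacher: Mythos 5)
Your proof is correct and complete: uniqueness follows from the contraction inequality, existence from the geometric decay of consecutive differences (which yields the Cauchy property), completeness of $\Eu$ to extract the limit, Lipschitz continuity of $T$ to pass to the limit in $x_{k+1}=T(x_k)$, and a one-line induction for the rate $\|x_k-x^\star\|\leq\kappa^k\|x_0-x^\star\|$. Note that the paper states this theorem without proof, citing only Banach's 1922 paper, so there is no in-paper argument to compare against; yours is the standard classical proof and fills that gap correctly.
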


For the purposes of projection algorithms for feasibility problem, Banach's contraction principle is too restricted. We will rarely be fortunate enough for the operator $T$ to be contractive and it will usually be the case that our problem has more than one solution (i.e., $\Fix T$ is not singleton). On the other hand, nonexpansivity of $T$ (i.e., \eqref{eq:contraction} with $\kappa=1$) is also not sufficient to ensure convergence of its Banach--Picard iteration. For a simple example, consider the mapping $T=-\Id$ and any $x_0\neq 0$. Therefore, notions stronger than nonexpansivity but weaker than contractivity are required. In what follows, we explore such notions.

\begin{definition}[Notions of nonexpansiveness]
	Let $D$ be a nonempty subset of $\Eu$ and let $T:D\to\Eu$. The operator $T$ is said to be
	\begin{enumerate}
		\item \emph{nonexpansive} if
		\begin{equation*}
		\|T(x)-T(y)\|\leq\|x-y\|, \quad \forall x,y\in D;
		\end{equation*}
		\item \emph{firmly nonexpansive} if
		\begin{equation*}
		\|T(x)-T(y)\|^2+\|(\Id-T)(x)-(\Id-T)(y)\|^2\leq\|x-y\|^2, \quad \forall x,y\in D,
		\end{equation*}
		or, equivalently,
		\begin{equation}\label{eq:alt fne}
		\langle x-y,T(x)-T(y)\rangle\geq \|T(x)-T(y)\|^2, \quad \forall x,y\in D;
		\end{equation}
		\item $\alpha$\emph{-averaged} for  $\alpha\in\,]0,1[$ if there exists a nonexpansive operator $R:D\to\Eu$~such~that
		\begin{equation*}
		T=(1-\alpha)\Id+\alpha R.
		\end{equation*}
	\end{enumerate}
\end{definition}

It follows immediately from the respective definitions that both firm nonexpansiveness and $\alpha$-averaged\-ness imply nonexpansiveness, while the converse implications are not true in general. Moreover, it also follows that if $T$ is $\alpha$-averaged, then it is also $\alpha'$-averaged for all $\alpha'\in[\alpha,1[$. Further relationships between the notions are shown in the following proposition. For more, see, e.g.,~\cite[Chapter~4]{BC17}.

\begin{proposition}\label{fact:firmly_nonexp}
	Let $D\subseteq\Eu$ be nonempty and let $T:D\to\Eu$. The following hold:
	\begin{enumerate}
		\item If $T$ is $\alpha$-averaged for $\alpha\in{]0,1[}$, then
		$$\|T(x)-T(y)\|^2\leq \|x-y\|^2-\frac{1-\alpha}{\alpha}\|(\Id-T)(x)-(\Id-T)(y)\|^2,\quad\forall x,y,\in D.$$\label{fact:firmly_nonexp0}
		\item $T$ is firmly nonexpansive if and only if $2T-\Id$ is nonexpansive.\label{fact:firmly_nonexpI}
		\item If $T$ is $\alpha$-averaged with $\alpha\in\left]0,\frac{1}{2}\right]$, then $T$ is firmly nonexpansive.\label{fact:firmly_nonexpII}
	\end{enumerate}
\end{proposition}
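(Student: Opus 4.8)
The plan is to prove the three items in the order (i), (ii), (iii), obtaining (iii) as a quick consequence of (i) together with the equivalent characterization established in (ii). Items (i) and (ii) are both direct computations with inner products; the only care needed is bookkeeping of the coefficients $\alpha$ and $1-\alpha$.

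For (i), I would write the averagedness hypothesis as $T=(1-\alpha)\Id+\alpha R$ with $R$ nonexpansive, fix $x,y\in D$, and abbreviate $a:=x-y$ and $b:=R(x)-R(y)$. Then $T(x)-T(y)=(1-\alpha)a+\alpha b$, while $(\Id-T)(x)-(\Id-T)(y)=a-\left[(1-\alpha)a+\alpha b\right]=\alpha(a-b)$. The crux is the convex-combination identity $\|(1-\alpha)a+\alpha b\|^2=(1-\alpha)\|a\|^2+\alpha\|b\|^2-\alpha(1-\alpha)\|a-b\|^2$, which follows by expanding the inner product. Using that $R$ is nonexpansive, i.e.\ $\|b\|=\|R(x)-R(y)\|\leq\|x-y\|=\|a\|$, lets me replace $\alpha\|b\|^2$ by $\alpha\|a\|^2$ and merge it with the first term to leave $\|a\|^2-\alpha(1-\alpha)\|a-b\|^2$. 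Finally, rewriting $\|a-b\|^2=\alpha^{-2}\|(\Id-T)(x)-(\Id-T)(y)\|^2$ turns $\alpha(1-\alpha)\|a-b\|^2$ into $\frac{1-\alpha}{\alpha}\|(\Id-T)(x)-(\Id-T)(y)\|^2$, giving exactly the claimed inequality. I expect the coefficient tracking to be the only place one can slip.

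For (ii), I would set $R:=2T-\Id$, fix $x,y$, put $t:=T(x)-T(y)$ and $u:=x-y$, and expand $\|R(x)-R(y)\|^2=\|2t-u\|^2=4\|t\|^2-4\langle u,t\rangle+\|u\|^2$. Then the nonexpansiveness condition $\|R(x)-R(y)\|^2\leq\|u\|^2$ for all $x,y$ holds if and only if $4\|t\|^2-4\langle u,t\rangle\leq 0$, i.e.\ $\langle u,t\rangle\geq\|t\|^2$, which is precisely the characterization \eqref{eq:alt fne} of firm nonexpansiveness. Since every step is an equivalence, both implications are obtained simultaneously.

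For (iii), I would simply invoke (i): when $\alpha\in\,]0,\tfrac12]$ the factor $\frac{1-\alpha}{\alpha}=\frac1\alpha-1$ is at least $1$ by monotonicity of $\alpha\mapsto(1-\alpha)/\alpha$ on $]0,1[$, so the inequality from (i) yields $\|T(x)-T(y)\|^2\leq\|x-y\|^2-\|(\Id-T)(x)-(\Id-T)(y)\|^2$. Rearranging gives $\|T(x)-T(y)\|^2+\|(\Id-T)(x)-(\Id-T)(y)\|^2\leq\|x-y\|^2$, which is the defining inequality of firm nonexpansiveness. There is no genuine obstacle here beyond observing the threshold $\alpha\leq\tfrac12$ is exactly what makes the coefficient reach $1$.
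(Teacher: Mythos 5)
Your proposal is correct and takes essentially the same route as the paper for items (i) and (iii): the paper likewise writes $T=(1-\alpha)\Id+\alpha R$, expands the norms, and uses nonexpansiveness of $R$ (your packaging of the algebra via the convex-combination identity $\|(1-\alpha)a+\alpha b\|^2=(1-\alpha)\|a\|^2+\alpha\|b\|^2-\alpha(1-\alpha)\|a-b\|^2$ is just a tidier bookkeeping of the same computation), and (iii) is deduced from (i) exactly as you do, since $\frac{1-\alpha}{\alpha}\geq 1$ when $\alpha\leq\frac12$. The only divergence is item (ii), which the paper leaves as an exercise; your expansion $\|2t-u\|^2=4\|t\|^2-4\langle u,t\rangle+\|u\|^2$ with the observation that $\|2t-u\|^2\leq\|u\|^2$ is equivalent to $\langle u,t\rangle\geq\|t\|^2$, i.e.\ to \eqref{eq:alt fne}, is a correct and complete way to fill that gap.
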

\begin{proof}
\Cref{fact:firmly_nonexp0}~By definition, there exists a nonexpansive operator $R:D\to\Eu$ such that $T=(1-\alpha)\Id+\alpha R$. We therefore have that
$$ \|T(x)-T(y)\|^2 = (1-\alpha)^2\|x-y\|^2 + 2\alpha(1-\alpha)\langle x-y,R(x)-R(y)\rangle + \alpha^2\|R(x)-R(y)\|^2, $$
and that
\begin{align*}
\frac{1}{\alpha}\|(\Id-T)(x)-(\Id-T)(y)\|^2 &= \alpha\|(\Id-R)(x)-(\Id-R)(y)\|^2 \\
&= \alpha\left(\|x-y\|^2 - 2\langle x-y,R(x)-R(y)\rangle + \|R(x)-R(y)\|^2\right).
\end{align*}
By summing these two equalities together, we obtain
\begin{align*}
\|T(x)-T(y)\|^2+\frac{1-\alpha}{\alpha}\|(\Id-T)(x)-(\Id-T)(y)\|^2
&= (1-\alpha)\|x-y\|^2 + \alpha\|R(x)-R(y)\|^2.
\end{align*}
The result then follows by using the nonexpansivity of $R$.
\Cref{fact:firmly_nonexpI}~Left as an exercise for the reader.
\Cref{fact:firmly_nonexpII}~Follows immediately from \Cref{fact:firmly_nonexp0}.
\end{proof}

\begin{exercise}
	Prove \Cref{fact:firmly_nonexp}\Cref{fact:firmly_nonexpI}.
\end{exercise}	

Our next result provides a useful criteria for convergence of the Banach--Picard iteration in~\eqref{eq:Picard_iteration}; namely, it suffices to assume that $T$ is averaged and possesses a fixed point. This result is actually a special case of \emph{Opial's Theorem} (\cite{opial1967weak}), which remains true in Banach spaces satisfying \emph{Opial's property}.
\begin{theorem}[Convergence of averaged iterations]\label{th:Opial}
Let $D$ be a nonempty, closed and convex subset of $\Eu$ and let $T:D\to D$ be an $\alpha$-averaged operator with $\Fix T\neq\emptyset$. Given any $x_0\in D$, set
	$$
	x_{k+1}=T(x_k), \quad\text{for } k=0,1,2\ldots.
	$$
	Then ${(x_k)}_{k=0}^\infty$ converges to a point $x^\star\in\Fix T$.
\end{theorem}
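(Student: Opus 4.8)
The plan is to prove convergence in three stages: first establish that the sequence is \emph{Fej\'er monotone} with respect to $\Fix T$, then extract a subsequence converging to some point, and finally upgrade subsequential convergence to convergence of the whole sequence by identifying the limit as a fixed point. Throughout, fix an arbitrary $z\in\Fix T$ (nonempty by assumption).

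\emph{Step 1 (Fej\'er monotonicity and asymptotic regularity).} Since $T$ is $\alpha$-averaged and $z=T(z)$, I apply \Cref{fact:firmly_nonexp}\Cref{fact:firmly_nonexp0} with $y=z$. Using $(\Id-T)(z)=0$, this yields
\begin{equation*}
\|x_{k+1}-z\|^2=\|T(x_k)-z\|^2\leq\|x_k-z\|^2-\frac{1-\alpha}{\alpha}\|(\Id-T)(x_k)\|^2.
\end{equation*}
In particular $\|x_{k+1}-z\|\leq\|x_k-z\|$, so $(\|x_k-z\|)_k$ is nonincreasing and hence convergent; this shows $(x_k)$ is bounded. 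Summing the telescoping inequality over $k$ gives $\tfrac{1-\alpha}{\alpha}\sum_{k=0}^\infty\|(\Id-T)(x_k)\|^2\leq\|x_0-z\|^2<\infty$, whence $\|(\Id-T)(x_k)\|\to 0$, i.e.\ $\|x_{k+1}-x_k\|\to 0$. This asymptotic regularity is the key quantitative consequence of averagedness.

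\emph{Step 2 (subsequential limits are fixed points).} Since $(x_k)$ is bounded in the finite-dimensional space $\Eu$, by Bolzano--Weierstrass it has a convergent subsequence $x_{k_j}\to x^\star$; because $D$ is closed, $x^\star\in D$. Now $R:=\frac{1}{\alpha}(T-(1-\alpha)\Id)$ is nonexpansive, hence continuous, and so is $T=(1-\alpha)\Id+\alpha R$. Passing to the limit in $\|(\Id-T)(x_{k_j})\|\to 0$ and using continuity of $\Id-T$ gives $(\Id-T)(x^\star)=0$, so $x^\star\in\Fix T$.

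\emph{Step 3 (uniqueness of the limit).} The remaining point is that the \emph{whole} sequence converges, not merely a subsequence. Here I use that Fej\'er monotonicity holds with respect to \emph{every} element of $\Fix T$; in particular, applying Step 1 with $z=x^\star$ shows that $(\|x_k-x^\star\|)_k$ is convergent. Along the subsequence $x_{k_j}\to x^\star$ we have $\|x_{k_j}-x^\star\|\to 0$, so the limit of the full monotone sequence $(\|x_k-x^\star\|)_k$ must be $0$, giving $x_k\to x^\star$.

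The only genuinely delicate point is Step 3: one must resist concluding convergence directly from boundedness, and instead exploit that the Fej\'er inequality is available for the subsequential limit itself once it is known to lie in $\Fix T$. (In the infinite-dimensional version this is exactly where Opial's property enters, to rule out two distinct weak subsequential limits; in finite dimensions the monotonicity argument above suffices.)
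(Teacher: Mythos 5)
Your proposal is correct and follows essentially the same argument as the paper's own proof: Fej\'er monotonicity and asymptotic regularity via \Cref{fact:firmly_nonexp}\Cref{fact:firmly_nonexp0}, extraction of a cluster point which is shown to be a fixed point by continuity of $\Id-T$, and then convergence of the whole sequence by re-applying the monotone distance argument with the cluster point itself. The only cosmetic differences are that you obtain $\|x_{k+1}-x_k\|\to 0$ by telescoping the sum rather than from convergence of the distances, and that you make explicit the continuity of $T$ and the role of the closedness of $D$, which the paper leaves implicit.
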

\begin{proof}
Let $x\in\Fix T$. Since $T$ is $\alpha$-averaged, by~\Cref{fact:firmly_nonexp}\Cref{fact:firmly_nonexp0}, we have
	  $$\|x_{k+1}-x\|^2 + \frac{1-\alpha}{\alpha}\|x_k-x_{k+1}\|^2 \leq \|x_k-x\|^2\quad\text{for all } k=0,1,2,\ldots.$$
It then follows that $(\|x_k-x\|)_{k=1}^\infty$ is non-increasing, $(x_k)_{k=0}^\infty$ is bounded and $x_k-x_{k+1}\to 0$. Now, as a bounded sequence, $(x_k)_{k=0}^\infty$ has a cluster point $x^\star$. Let $(x_{k_n})_{n=0}^{\infty}$ be a subsequence such that $x_{k_n}\to x^\star$. We therefore have that
  $$ (\Id-T)(x^\star) = \lim_{n\to\infty}(\Id-T)(x_{k_n}) = \lim_{n\to\infty}(x_{k_n}-x_{k_n+1})=0, $$
which shows that $x^\star\in\Fix T$. Since $(\|x_k-x^\star\|)_{k=1}^\infty$ is non-increasing and $\|x_{k_n}-x^\star\|\to 0$, it follows that $\|x_k-x^\star\|\to 0$, which completes the proof.
\end{proof}

Clearly, a Banach--Picard iteration~\eqref{eq:Picard_iteration} can only converge when the operator $T$ possesses at least one fixed point. Nevertheless, when $\Fix T=\emptyset$, it is still possible to say something about the asymptotic behavior when $T$ is averaged. In this tutorial, we prove a simplified version of the asymptotic behavior known from (\cite{P71,BBR78}). In order to do so, we first recall the following \emph{summability method}.

\begin{lemma}[Ces\'aro summation]\label{lem:cesaro}
Let $(x_k)\subseteq\Eu$ be a convergent sequence with limit $x^\star\in\Eu$. Then
 $$\frac{1}{k}\sum_{n=1}^kx_n\to x^\star\text{~~as~~}k\to\infty.$$
\end{lemma}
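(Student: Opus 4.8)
The plan is to use the standard $\varepsilon$-splitting argument for Cesàro means. Writing $s_k := \frac{1}{k}\sum_{n=1}^k x_n$, the key preliminary observation is that $\frac{1}{k}\sum_{n=1}^k x^\star = x^\star$, so the quantity of interest can be rewritten as
$$s_k - x^\star = \frac{1}{k}\sum_{n=1}^k (x_n - x^\star).$$
This reduces the problem to showing that the Cesàro means of the sequence $(x_n - x^\star)$, which converges to $0$, themselves converge to $0$.

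First I would fix $\varepsilon > 0$ and invoke the convergence $x_n \to x^\star$ to obtain an index $N$ such that $\|x_n - x^\star\| < \varepsilon$ for all $n > N$. Then, applying the triangle inequality and splitting the sum at $N$, I would bound
$$\|s_k - x^\star\| \leq \frac{1}{k}\sum_{n=1}^N \|x_n - x^\star\| + \frac{1}{k}\sum_{n=N+1}^k \|x_n - x^\star\|.$$

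Next I would estimate the two terms separately. The second term contains at most $k$ summands, each bounded by $\varepsilon$, so it is at most $\varepsilon$. The first term has numerator $M := \sum_{n=1}^N \|x_n - x^\star\|$, a fixed constant independent of $k$, so it is at most $M/k$, which can be made smaller than $\varepsilon$ by choosing $k$ larger than some threshold $K \geq N$. Combining these bounds for $k > K$ yields $\|s_k - x^\star\| < 2\varepsilon$; since $\varepsilon > 0$ was arbitrary, this establishes $s_k \to x^\star$.

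I expect this argument to present no genuine obstacle beyond the usual care in ordering the quantifiers. The only subtle point is that $N$ must be fixed \emph{before} $k$ is sent to infinity, so that the ``head'' of the sum, $\sum_{n=1}^N \|x_n - x^\star\|$, is a true constant whose contribution is annihilated by the factor $1/k$; the ``tail'' is then controlled uniformly by $\varepsilon$ because every one of its terms already lies below that bound.
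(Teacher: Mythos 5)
Your proof is correct and complete: the reduction to the sequence $(x_n-x^\star)\to 0$, the split of the sum at a fixed index $N$, and the bounds $M/k$ on the head and $\varepsilon$ on the tail (with the quantifiers in the right order) constitute the standard Ces\`aro argument with no gaps. Note that the paper itself leaves this lemma as an exercise for the reader, so there is no proof in the text to compare against; yours is exactly the argument the authors intend.
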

\begin{proof}
Left as an exercise for the reader.
\end{proof}

\begin{exercise}
	Prove \Cref{lem:cesaro} and give an example of a non-convergent sequence $(x_k)\subseteq\Eu$ which is \emph{Ces\'aro summable}, i.e., $\frac{1}{k}\sum_{n=1}^kx_n\to x^\star$ for some $x^\star\in\Eu$. Is a Ces\'aro summable sequence necessarily bounded?
\end{exercise}	

The following lemma will help to streamline the proof of \Cref{fact:pazy}

\begin{lemma}\label{lem:asympt_averaged}
	Suppose $T\colon\Eu\to\Eu$ is an $\alpha$-averaged operator for some $\alpha\in\,]0,1[$. Let $x_0,y_0\in\Eu$ and denote $ x_k=T^k(x_0)$ and $y_k=T^k(y_0)$ for $k=1,2,\ldots$. Then
	\begin{equation} \frac{1-\alpha}{\alpha}\sum_{k=0}^n\|(\Id-T)(x_k)-(\Id-T)(y_k)\|^2 \leq \|x_0-y_0\|^2-\|x_{n+1}-y_{n+1}\|^2,\quad\text{for all } n=0,1,2,\ldots.
	\end{equation}
	In particular,  $(\Id-T)(x_k)-(\Id-T)(y_k) \to 0$ as $k\to\infty$.
\end{lemma}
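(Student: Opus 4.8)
The plan is to reduce the stated inequality to a telescoping sum driven by the single-step estimate already available in \Cref{fact:firmly_nonexp}\Cref{fact:firmly_nonexp0}. The crucial observation is that, because $x_{k+1}=T(x_k)$ and $y_{k+1}=T(y_k)$, applying that estimate with the choices $x=x_k$ and $y=y_k$ yields precisely
\begin{equation*}
\|x_{k+1}-y_{k+1}\|^2 \leq \|x_k-y_k\|^2 - \frac{1-\alpha}{\alpha}\|(\Id-T)(x_k)-(\Id-T)(y_k)\|^2.
\end{equation*}
This is the only analytic input required; everything else is bookkeeping.

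First I would rearrange the displayed inequality to isolate the summand on the left:
\begin{equation*}
\frac{1-\alpha}{\alpha}\|(\Id-T)(x_k)-(\Id-T)(y_k)\|^2 \leq \|x_k-y_k\|^2 - \|x_{k+1}-y_{k+1}\|^2.
\end{equation*}
Next I would sum this over $k=0,1,\ldots,n$. The right-hand side telescopes, leaving only the first and last terms $\|x_0-y_0\|^2-\|x_{n+1}-y_{n+1}\|^2$, which is exactly the claimed bound. This establishes the finite inequality for every $n$.

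For the asymptotic ``in particular'' assertion, I would argue that the series $\sum_{k=0}^\infty\|(\Id-T)(x_k)-(\Id-T)(y_k)\|^2$ converges. Indeed, its partial sums are nonnegative and, by the inequality just proved together with $\|x_{n+1}-y_{n+1}\|^2\geq 0$, bounded above by $\frac{\alpha}{1-\alpha}\|x_0-y_0\|^2$ (here I use $\alpha\in\,]0,1[$ so that the factor $\frac{1-\alpha}{\alpha}$ is strictly positive). A monotone, bounded sequence of partial sums converges, so the general term must tend to $0$; hence $(\Id-T)(x_k)-(\Id-T)(y_k)\to 0$.

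I do not anticipate a genuine obstacle here: the entire argument is a telescoping estimate followed by the standard fact that the terms of a convergent nonnegative series vanish. The only points demanding minor care are confirming that the iterates satisfy $T(x_k)=x_{k+1}$ so that \Cref{fact:firmly_nonexp}\Cref{fact:firmly_nonexp0} applies verbatim, and tracking the sign of the constant $\frac{1-\alpha}{\alpha}$ to justify both the rearrangement and the boundedness of the partial sums.
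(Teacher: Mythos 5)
Your proposal is correct and follows essentially the same route as the paper: the single-step estimate from \Cref{fact:firmly_nonexp}\Cref{fact:firmly_nonexp0} applied along the iterates, a telescoping sum, and the standard fact that the terms of a convergent nonnegative series tend to zero. Nothing to add.
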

\begin{proof}
Since $T$ is $\alpha$-averaged, \Cref{fact:firmly_nonexp}\Cref{fact:firmly_nonexp0} implies that
  $$ \|x_{k+1}-y_{k+1}\|^2 + \frac{1-\alpha}{\alpha}\|(\Id-T)(x_k)-(\Id-T)(y_k)\|^2 \leq \|x_k-y_k\|^2,\quad\text{for all } k=0,1,2,\ldots,$$
which telescopes to yield
  $$\|x_{n+1}-y_{n+1}\|^2+\frac{1-\alpha}{\alpha}\sum_{k=0}^n\|(\Id-T)(x_k)-(\Id-T)(y_k)\|^2 \leq \|x_0-y_0\|^2,\quad\text{for all } n=0,1,2,\ldots.$$
The series in the previous equation therefore converges as $n\to\infty$, from which the result follows.
\end{proof}

\begin{theorem}[Asymptotic behavior of averaged iterations]\label{fact:pazy}
	Let $T:\Eu\to\Eu$ be an $\alpha$-averaged operator for some $\alpha\in\,]0,1[$. Then
	$$ \Fix T=\emptyset \quad\iff\quad \|T^k(x)\|\to\infty \text{~~for any~}x\in\Eu.$$
\end{theorem}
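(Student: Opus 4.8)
The plan is to prove the two implications separately, with the reverse one being routine and the forward one carrying all the weight. For the reverse implication I argue by contraposition: if $x^\star\in\Fix T$, then nonexpansiveness of $T$ (which follows from $\alpha$-averagedness) gives $\|T^k(x)-x^\star\|=\|T^k(x)-T^k(x^\star)\|\leq\|x-x^\star\|$ for every $k$, so the orbit $(T^k(x))$ is bounded and hence $\|T^k(x)\|\not\to\infty$. For the forward implication I again argue by contraposition: assuming $\|T^k(x_0)\|\not\to\infty$ for some $x_0$, I will manufacture a fixed point. Write $x_k:=T^k(x_0)$ and $v_k:=(\Id-T)(x_k)=x_k-x_{k+1}$. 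Nonexpansiveness gives $\|v_{k+1}\|=\|T(x_k)-T(x_{k+1})\|\leq\|v_k\|$, so $(\|v_k\|)$ is non-increasing and converges to some $\ell\geq0$. Since the orbit does not diverge, $\liminf_k\|x_k\|<\infty$, so by finite-dimensionality (Bolzano--Weierstrass) a subsequence converges, $x_{k_j}\to\bar x$.

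The crux is to understand the orbit of the cluster point $\bar x$. For each fixed $m$, continuity of $T$ gives $x_{k_j+m}=T^m(x_{k_j})\to T^m(\bar x)$, whence $\|T^m(\bar x)-T^{m+1}(\bar x)\|=\lim_j\|x_{k_j+m}-x_{k_j+m+1}\|=\ell$, because $\|x_n-x_{n+1}\|\to\ell$ and $k_j+m\to\infty$. Thus \emph{every} displacement along the orbit of $\bar x$ equals $\ell$. Feeding the consecutive points $T^m(\bar x),T^{m+1}(\bar x)$ into \Cref{fact:firmly_nonexp}\Cref{fact:firmly_nonexp0} and using that both the left-hand side and the leading right-hand term contribute $\ell^2$ forces $(\Id-T)(T^m(\bar x))=(\Id-T)(T^{m+1}(\bar x))$ for all $m$; that is, the displacement vector is a constant $w:=(\Id-T)(\bar x)$ with $\|w\|=\ell$, and the orbit of $\bar x$ is the arithmetic progression $T^m(\bar x)=\bar x-mw$.

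With $w$ identified, I pin down the asymptotics of the original orbit. Applying \Cref{lem:asympt_averaged} to the two starting points $x_0$ and $\bar x$, and using that $(\Id-T)(T^k(\bar x))\equiv w$, yields $v_k=(\Id-T)(x_k)\to w$. Since $\sum_{k=0}^{n-1}v_k=x_0-x_n$ telescopes, Ces\'aro summation (\Cref{lem:cesaro}) applied to $\frac1n\sum_{k=0}^{n-1}v_k$ gives $\frac{x_n}{n}\to-w$, so $\frac{\|x_n\|}{n}\to\|w\|=\ell$. If $w\neq0$, this forces $\|x_n\|\to\infty$, contradicting the existence of the bounded subsequence $(x_{k_j})$; hence $w=0$, i.e.\ $\bar x=T(\bar x)\in\Fix T$, so $\Fix T\neq\emptyset$. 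This proves the contrapositive of the forward implication and completes the equivalence.

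The main obstacle is exactly the step that produces $w$. The norms $\|v_k\|$ converge, and (by \Cref{lem:asympt_averaged} applied with $y_0=x_1$) consecutive differences $v_k-v_{k+1}$ vanish, but this does \emph{not} by itself force $(v_k)$ to converge: the vectors could drift around a sphere of radius $\ell$. The device that resolves this is to extract the cluster point $\bar x$, show its orbit is a genuine arithmetic progression so that its displacement vector is exactly constant, and then transfer this constancy back to the original orbit through the two-orbit estimate of \Cref{lem:asympt_averaged}. I should also flag that the compactness extraction $x_{k_j}\to\bar x$ uses finite-dimensionality; in infinite dimensions one would obtain only weak cluster points and would need a demiclosedness argument in place of plain continuity.
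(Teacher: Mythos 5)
Your proof is correct, and it shares the overall architecture of the paper's argument: both directions are handled via the contrapositive, a cluster point $\bar x$ of the orbit is extracted, the displacement along the orbit of $\bar x$ is shown to be a constant vector, \Cref{lem:asympt_averaged} transfers this back to the original orbit, and telescoping plus Ces\'aro summation (\Cref{lem:cesaro}) finishes the argument. The differences lie in the execution of two steps. For the easy direction, you use plain nonexpansiveness to bound $\|T^k(x)-x^\star\|$, which is more elementary than the paper's appeal to \Cref{th:Opial}. More substantively, at the crux --- constancy of the displacement along the orbit of the cluster point --- the paper applies \Cref{lem:asympt_averaged} a second time, with starting points $x^\star$ and $T(x^\star)$, and kills the resulting telescoped sum by taking limits along the subsequence; you instead exploit the monotone convergence $\|v_k\|\downarrow\ell$ (a consequence of nonexpansiveness), use continuity of $T^m$ to see that \emph{every} displacement of the $\bar x$-orbit has norm exactly $\ell$, and then feed consecutive orbit points into \Cref{fact:firmly_nonexp}\Cref{fact:firmly_nonexp0} so that the two $\ell^2$ terms cancel and force the correction term to vanish. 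Your device buys a more geometric picture (the orbit of $\bar x$ is literally an arithmetic progression $\bar x - mw$), at the cost of invoking the averagedness inequality once more outside the packaged lemma; the paper's double use of \Cref{lem:asympt_averaged} keeps all the averagedness bookkeeping inside one lemma. Your finish also differs slightly: you compute $x_n/n\to-w$ along the full sequence and rule out $w\neq0$ by contradiction with the bounded subsequence, whereas the paper evaluates the Ces\'aro limit along the subsequence, where $x_{k_n}/k_n\to0$, and reads off $(\Id-T)(x^\star)=0$ directly; these are equivalent. Your closing remark about the failure of the compactness extraction in infinite dimensions is accurate and consistent with the caveats in \Cref{sec:infinite}.
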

\begin{proof}
To prove the result, we establish its contrapositive: $\Fix T\neq\emptyset $ if and only if there exists an $x\in\Eu$ such that $\|T^k(x)\|\not\to\infty$. To this end, first take any $x\in\Eu$ and suppose that $\Fix T\neq\emptyset$. Then the sequence $T^k(x)$ is convergent by \Cref{th:Opial} and hence, in particular, also bounded.

To prove the reverse implication, suppose there exists an $x\in\Eu$ such that $x_k=T^k(x)$ contains a bounded subsequence. The sequence $(x_k)$ then possesses a convergent subsequence, say $x_{k_n}\to x^\star$. Since $T$ is $\alpha$-averaged, applying \Cref{lem:asympt_averaged} with $x_0=x$ and $y_0=x^\star$ yields
  $$ (\Id-T)(x_k)-(\Id-T)T^k(x^\star)\to 0 \implies (\Id-T)T^{k_n}(x^\star)\to (\Id-T)(x^\star). $$
Also, applying  \Cref{lem:asympt_averaged} with $x_0=x^\star$ and $y_0=T(x^\star)$ yields
$$\frac{1-\alpha}{\alpha}\sum_{j=1}^{k}\|(\Id-T)T^{j-1}(x^\star)-(\Id-T)T^j(x^\star)\|^2 \leq \|(\Id-T)(x^\star)\|^2- \|(\Id-T)T^{k}(x^\star)\|^2. $$
Taking the limit along the subsequence $(k_n)$ gives
$$\frac{1-\alpha}{\alpha}\sum_{j=1}^{\infty}\|(\Id-T)T^{j-1}(x^\star)-(\Id-T)T^j(x^\star)\|^2 \leq \|(\Id-T)(x^\star)\|^2- \|(\Id-T)(x^\star)\|^2=0, $$
which shows that all terms in the summation are identically zero.  Consequently, we have
$$ (\Id-T)T^j(x^\star)=(\Id-T)(x^\star)\implies (\Id-T)(x_k)\to(\Id-T)(x^\star). $$ Denoting $x_0:=x$, we observe that
$$ x_0 = \sum_{n=0}^{k-1}(\Id-T)(x_n) + x_k \implies \frac{1}{k}x_0 = \frac{1}{k}\sum_{n=0}^{k-1}(\Id-T)(x_n) + \frac{1}{k}x_k. $$
Taking the limit along the subsequence $x_{k_n}$ and using \Cref{lem:cesaro} gives
$$ 0\cdot x_0 = (\Id-T)(x^*) + 0\cdot x^\star \implies x^\star = T(x^\star).  $$
In order words, $x^\star\in\Fix T$ which shows that $\Fix T\neq\emptyset$.  The proof is now complete.
\end{proof}

\subsection{The projection mapping}
In this section, we examine  properties of the projector onto closed and convex sets.
\begin{proposition}\label{fact:projection}
	Let $C\subseteq\Eu$ be nonempty, closed and convex. Then $C$ is a Chebyshev set and, for every $x\in\Eu$, we have
		\begin{equation}\label{eq:projection}
		p=P_C(x) \quad \iff \quad p\in C  \text{~~and~~}  \langle c-p,x-p \rangle \leq 0 \text{~~for all~} c\in C.
		\end{equation}
\end{proposition}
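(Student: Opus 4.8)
The plan is to first prove the variational characterization \eqref{eq:projection} as an equivalence for membership $p\in P_C(x)$, and then to extract the Chebyshev property (existence together with uniqueness) from it. Existence of at least one nearest point is already guaranteed for the nonempty closed set $C$ by the earlier exercise, so the genuinely new content is the characterization itself and the uniqueness that it entails.

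For the reverse implication, I would suppose $p\in C$ satisfies $\langle c-p,x-p\rangle\leq 0$ for all $c\in C$ and compute, for an arbitrary $c\in C$, that $\|x-c\|^2=\|(x-p)-(c-p)\|^2=\|x-p\|^2-2\langle x-p,c-p\rangle+\|c-p\|^2\geq\|x-p\|^2$, where the final inequality uses the hypothesis together with $\|c-p\|^2\geq 0$. This shows $\|x-p\|=d_C(x)$, so $p\in P_C(x)$; moreover the inequality is strict whenever $c\neq p$, which will later deliver uniqueness essentially for free.

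The forward implication is where convexity does the real work, and I expect it to be the crux of the argument. Given $p\in P_C(x)$ and any $c\in C$, convexity ensures that the entire segment $p+t(c-p)=(1-t)p+tc$ lies in $C$ for $t\in[0,1]$. Minimality of $p$ then gives $\|x-p\|^2\leq\|x-p-t(c-p)\|^2=\|x-p\|^2-2t\langle x-p,c-p\rangle+t^2\|c-p\|^2$; cancelling $\|x-p\|^2$, dividing by $t>0$, and letting $t\to 0^+$ yields $\langle c-p,x-p\rangle\leq 0$. The key idea is to perturb the minimizer toward an arbitrary feasible point and read off a first-order condition in the limit; this is precisely the step that fails without convexity, since $p+t(c-p)$ need not belong to $C$.

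Finally, I would settle the Chebyshev claim. Existence is supplied by the exercise, so it remains to prove uniqueness. I would take two nearest points $p_1,p_2\in P_C(x)$, apply the now-established inequality in \eqref{eq:projection} to $p_1$ with $c=p_2$ and to $p_2$ with $c=p_1$, and add the two resulting inequalities; the cross terms combine to give $\langle p_2-p_1,p_2-p_1\rangle=\|p_1-p_2\|^2\leq 0$, forcing $p_1=p_2$. (Alternatively, the strict inequality noted in the reverse implication already yields uniqueness directly.) With uniqueness in hand, $P_C(x)$ is a singleton, which both establishes that $C$ is Chebyshev and justifies the equality notation $p=P_C(x)$ appearing in \eqref{eq:projection}.
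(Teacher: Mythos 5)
Your proof is correct, and it differs from the paper's in an instructive way. The forward implication is argued exactly as in the paper: perturb the minimizer along the segment $(1-\lambda)p+\lambda c\in C$ and pass to the limit in the resulting first-order inequality, with convexity entering precisely there. The two proofs diverge on uniqueness. The paper establishes the Chebyshev property \emph{before}, and independently of, the characterization, via the parallelogram law: for $p,q\in P_C(x)$ it writes $\|p-q\|^2=4d_C^2(x)-4\left\|\tfrac{p+q}{2}-x\right\|^2\leq 0$, where convexity enters only through the midpoint $\tfrac{p+q}{2}\in C$. You instead derive uniqueness \emph{from} the characterization, either by adding the two variational inequalities for $p_1,p_2$ to obtain $\|p_1-p_2\|^2\leq 0$ (the same cancellation the paper later reuses to prove firm nonexpansiveness of $P_C$), or from the strict form of your reverse-implication computation, $\|x-c\|^2\geq\|x-p\|^2+\|c-p\|^2$ for $c\neq p$. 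This obliges you to prove the backward implication, which the paper leaves as an exercise, so your argument is the more complete and more unified one: everything flows from \eqref{eq:projection}, and the quantitative inequality $\|x-c\|^2\geq\|x-p\|^2+\|c-p\|^2$ comes for free. What the paper's parallelogram computation buys in exchange is a template that generalizes: applied to a minimizing sequence rather than to two minimizers, the same identity shows the sequence is Cauchy, which is how existence of projections is proved in infinite-dimensional Hilbert spaces. Both proofs take existence of a nearest point from the earlier exercise on nonempty closed sets, so neither is circular.
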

\begin{proof}
Let $p,q\in P_C(x)$, which is nonempty because $C$ is closed. By the parallelogram law and the definition of the projector, we have
\begin{align*}
\|p-q\|^2&=\|(p-x)-(q-x)\|^2\\
&=2\|p-x\|^2+2\|q-x\|^2-\|p+q-2x\|^2\\
&=4d_C^2(x)-4\left\|\frac{p+q}{2}-x\right\|^2\leq 0,
\end{align*}
where the last inequality follows because convexity of $C$ yields $(p+q)/2\in C$. This implies $p=q$, so we conclude that the projection must be unique.

To prove the forward implication in~\eqref{eq:projection}, let $x\in E$ and $p=P_C(x)$. Pick any $c\in C$, and define $z_\lambda:=(1-\lambda)p+\lambda c\in C$, for all $\lambda\in {]0,1[}$. Then,
\begin{align*}
\|x-p\|^2\leq \|x-z_\lambda\|^2&=\|x-p-\lambda(c-p)\|^2\\
&=\|x-p\|^2+\lambda^2\|c-p\|^2-2\lambda\langle x-p,c-p\rangle.
\end{align*}
Since $\lambda>0$, we deduce $\langle x-p,c-p\rangle\leq \frac{\lambda}{2}\|c-p\|^2$, from which $\langle x-p,c-p\rangle\leq 0$ follows by letting $\lambda\rightarrow 0$. The backward implication in~\eqref{eq:projection} is left as an exercise for the reader.
\end{proof}

\begin{exercise}
	Complete the proof of \Cref{fact:projection}. That is, prove the backward implication of \eqref{eq:projection}.
\end{exercise}	

\begin{exercise}[Affine projectors]
A non-empty set $C\subseteq\Eu$ is an \emph{affine subspace} if $\lambda c_1+(1-\lambda)c_2\in C$ for all $\lambda\in\mathbb{R}$ and $c_1,c_2\in C$. Show that the projection onto an affine subspace satisfies~\eqref{eq:projection} with equality. Use this equality to verify the formula for the projection onto the hyperplane given in \Cref{example:linear_systems}.
\end{exercise}

In the subsequent sections, it will sometimes be easier to work with the following operator, which is defined in terms of the projector.
\begin{definition}[Reflection mapping]
Given a nonempty subset $C\subseteq\Eu$, the \emph{reflection mapping} (or \emph{reflector}) with respect to $C$ is the operator $R_C:\Eu\tto\Eu$ given by
$$R_C:=2P_C-\Id.$$
Each element $r\in R_C(x)$ is called a \emph{reflection} of $x$ with respect to $C$.
\end{definition}
It is immediate from the definition that the reflector is single-valued precisely when the corresponding projector is. Illustrations of reflection mappings are shown in \Cref{fig:projector_example}.

\begin{figure}[ht!]
\centering
\subfigure[A closed and convex set $C$ (Chebyshev). The projection and reflection of the point $x$ are unique]{\includegraphics[width=0.4\textwidth]{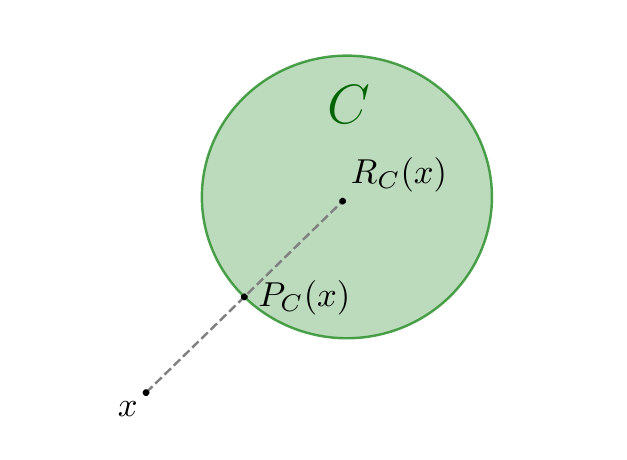}}\hspace{0.05\textwidth}
\subfigure[A closed but nonconvex set $C$. The projector and reflector are multivalued at $x$, with $P_C(x)=\{p_1,p_2\}$~and~$R_C(x)=\{r_1,r_2\}$]{{\includegraphics[width=0.4\textwidth]{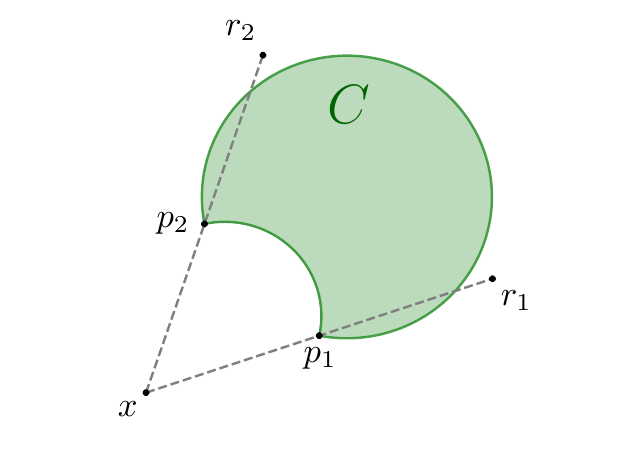}}}
\caption{Examples of projectors and reflectors onto convex and nonconvex sets.}\label{fig:projector_example}
\end{figure}

We conclude this section with the following fundamental result, which is responsible for the good behavior of projection algorithms in the convex setting.

\begin{proposition}\label{fact:projection_firmly_nonexapnsive}
Let $C\subseteq\Eu$ be nonempty, closed and convex. Then the projector, $P_C$, is firmly nonexpansive and the reflector, $R_C$, is nonexpansive.
\end{proposition}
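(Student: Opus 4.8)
The plan is to derive firm nonexpansiveness of $P_C$ directly from the variational characterization of the projector established in \Cref{fact:projection}, and then to obtain nonexpansivity of $R_C$ as an immediate consequence of the relationship between the two notions recorded earlier.

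First I would recall that, since $C$ is nonempty, closed and convex, \Cref{fact:projection} guarantees that $C$ is Chebyshev (so $P_C$ is single-valued) and that, for any $z\in\Eu$, the point $p=P_C(z)$ is characterized by $p\in C$ together with $\langle c-p,z-p\rangle\le 0$ for all $c\in C$. The strategy is then to verify the inner-product form \eqref{eq:alt fne} of firm nonexpansiveness, namely $\langle x-y,P_C(x)-P_C(y)\rangle\ge\|P_C(x)-P_C(y)\|^2$ for all $x,y\in\Eu$.

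To this end, fix $x,y\in\Eu$ and set $p:=P_C(x)$ and $q:=P_C(y)$. Applying the characterization at $x$ with the admissible choice $c=q\in C$ gives $\langle q-p,x-p\rangle\le 0$, and applying it at $y$ with the admissible choice $c=p\in C$ gives $\langle p-q,y-q\rangle\le 0$. I would add these two inequalities and rearrange the resulting inner product, using $(x-p)-(y-q)=(x-y)+(q-p)$, so that the cross terms collapse into $\langle q-p,x-y\rangle+\|q-p\|^2\le 0$. Moving $\|q-p\|^2=\|P_C(x)-P_C(y)\|^2$ to the other side yields exactly \eqref{eq:alt fne}, establishing that $P_C$ is firmly nonexpansive. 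Finally, nonexpansivity of the reflector is immediate: since $R_C=2P_C-\Id$ by definition and $P_C$ has just been shown to be firmly nonexpansive, \Cref{fact:firmly_nonexp}\Cref{fact:firmly_nonexpI} gives at once that $R_C=2P_C-\Id$ is nonexpansive.

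I expect the only delicate point to be the algebraic bookkeeping of signs when summing the two variational inequalities and regrouping the inner product; once the admissible test points $c=q$ and $c=p$ are chosen correctly, the rest is a direct appeal to results already in hand, with no further estimates required.
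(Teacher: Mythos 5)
Your proposal is correct and follows essentially the same route as the paper: apply the variational characterization of \Cref{fact:projection} at both points with the other projection as the test point, sum the two inequalities to obtain \eqref{eq:alt fne}, and then invoke \Cref{fact:firmly_nonexp}\Cref{fact:firmly_nonexpI} to conclude that $R_C=2P_C-\Id$ is nonexpansive. The algebraic regrouping $(x-p)-(y-q)=(x-y)+(q-p)$ is exactly the rearrangement carried out (implicitly) in the paper's proof, so there is nothing to add.
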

\begin{proof}
Let $x,y\in\Eu$. By applying~\Cref{fact:projection} to $P_C(x)$ and $P_C(y)$, respectively, we obtain
\begin{align*}
\langle P_C(y)-P_C(x), x-P_C(x) \rangle \leq 0 \quad\text{and}\quad
\langle P_C(x)-P_C(y), y-P_C(y) \rangle \leq 0.
\end{align*}
The addition of these inequalities gives
$$\langle P_C(x)-P_C(y),P_C(x)-P_C(y)-(x-y)\rangle \leq 0,$$
which, by \eqref{eq:alt fne}, implies that $P_C$ is firmly nonexpansive. The fact that $R_C$ is nonexpansive now follows from \Cref{fact:firmly_nonexp}\Cref{fact:firmly_nonexpI}.
\end{proof}

Although projectors onto convex sets are firmly nonexpansive by \Cref{fact:projection_firmly_nonexapnsive}, in general, their compositions need not be.
\begin{example}
Let $\Eu=\R^2$. Consider the constraint sets $C_1:=\R\times\{0\}$ and $C_2:=\{(\lambda,\lambda):\lambda\in\mathbb{R}\}$. Then
	$P_{C_2}P_{C_1}(x,y)=P_{C_2}(x,0)=(x,x)/2$
	and hence
	$$\langle (1,-2)-(0,0),P_{C_2}P_{C_1}(1,-2)-P_{C_2}P_{C_1}(0,0)\rangle=-1/2.$$
Consequently, the composition $P_{C_2}P_{C_1}$ is not firmly nonexpansive, according to \eqref{eq:alt fne}.\qede
\end{example}

\begin{exercise}[Further examples of projectors]
By appealing to \Cref{fact:projection}, show that each of the following projector formulae holds.
\begin{enumerate}
\item (Closed balls) Suppose $r>0$ and denote $\B_r=\{x\in\Eu:\|x\|\leq r\}$. Then
  $$ P_{\B_r}(x) = \frac{r}{\max\{\|x\|,r\}}x. $$
\item (Nonnegative orthant) Suppose $\Eu=\mathbb{R}^n$ and denote $\mathbb{R}^n_+=\{x\in\Eu:x_i\geq 0\text{ for }i=1,\dots,n\}$. Then
  $$ p=P_{\mathbb{R}_+^n}(x)\text{~~where~~}p_i=\max\{0,x_i\}\text{~for all~}i\in\{1,\dots,n\}. $$
\item (Linear systems) Consider a matrix $A\in\mathbb{R}^{m\times n}$  such that $A^TA$ is invertible and suppose {$b\in\operatorname{range}(A)\subseteq\mathbb{R}^m$}. Denote $C=\{x\in\mathbb{R}^n:Ax=b\}$, which is nonempty by assumption. Then
  \begin{equation}\label{eq:linear system proj}
   P_C(x) = x - (A^TA)^{-1}A^T(Ax-b).
  \end{equation}
Compare the computational advantages and disadvantages of using \eqref{eq:linear system proj} with the feasibility problem formulation (for the same problem) outlined in \Cref{example:linear_systems}.
\end{enumerate}
\end{exercise}

\begin{exercise}
Let $C\subseteq\Eu$ be a nonempty, closed set. Prove that the following formulae hold.
\begin{enumerate}
	\item (Translation formula) $P_{y+C}(x)=y+P_C(x-y)$ for all $x,y\in\Eu$.
	\item (Dilution formula) $P_{\alpha C}(x)=\alpha P_C(x/\alpha)$ for all $x\in\Eu$ and $\alpha\in\mathbb{R}\setminus\{0\}$.
\end{enumerate}	
\end{exercise}

\section{Projection algorithms for solving feasibility problems}\label{sec:ProjAlg}

\subsection{The method of cyclic projections}
In this section, we prove convergence of the method of cyclic projections, discussed in \Cref{sec:Preliminaries}, for convex feasibility problems. Given nonempty, closed and convex sets $C_1,\dots,C_r\subseteq\Eu$, we recall that this algorithm is the fixed point iteration defined by
$$x_{k+1} = (P_{C_r}P_{C_{r-1}}\dots P_{C_1})(x_k),\quad\text{for } k=0,1,2,\ldots. $$
In light of \Cref{th:Opial}, the general recipe for proving convergence is as follows: (i) establish that the underlying fixed operator is averaged, (ii) characterize its fixed points in a \emph{meaningful} way, and (iii) apply \Cref{th:Opial} to deduce convergence of the sequence.  To this end, we begin with the following result.

\begin{proposition}\label{prop:cyclic_proj_averaged}
  Let $C_1,\dots,C_r\subseteq\Eu$ be nonempty, closed and convex. Then the operator $P_{C_r}P_{C_{r-1}}\cdots P_{C_1}$ is $\alpha-$averaged with $\alpha=1-2^{-r}$.
\end{proposition}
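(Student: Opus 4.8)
The plan is to reduce the claim to two ingredients: the fact that each individual projector is firmly nonexpansive, and a composition lemma for averaged operators which I would prove by exhibiting the nonexpansive part of the composition explicitly, after which the exponent $1-2^{-r}$ falls out of a short induction.

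First I would record that, since each $C_i$ is nonempty, closed and convex, \Cref{fact:projection} guarantees it is Chebyshev, so each $P_{C_i}$ is single-valued and the composition is a genuine operator on $\Eu$. By \Cref{fact:projection_firmly_nonexapnsive} every $P_{C_i}$ is firmly nonexpansive, which by \Cref{fact:firmly_nonexp}\Cref{fact:firmly_nonexpI} means $2P_{C_i}-\Id$ is nonexpansive; equivalently, each $P_{C_i}$ is $\tfrac{1}{2}$-averaged. This is the base material for the induction.

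The heart of the argument is the following composition lemma: if $T_1=(1-\alpha_1)\Id+\alpha_1 R_1$ and $T_2=(1-\alpha_2)\Id+\alpha_2 R_2$ are averaged with $R_1,R_2$ nonexpansive and $\alpha_1,\alpha_2\in\,]0,1[$, then $T_2T_1$ is $\beta$-averaged with $1-\beta=(1-\alpha_1)(1-\alpha_2)$. To see this, I would simply expand
$$T_2T_1 = (1-\alpha_1)(1-\alpha_2)\Id + \alpha_1(1-\alpha_2)R_1 + \alpha_2 R_2T_1,$$
and observe that, setting $\beta:=1-(1-\alpha_1)(1-\alpha_2)=\alpha_1(1-\alpha_2)+\alpha_2\in\,]0,1[$, this rearranges to $T_2T_1=(1-\beta)\Id+\beta R$ with
$$R = \frac{\alpha_1(1-\alpha_2)}{\beta}R_1 + \frac{\alpha_2}{\beta}R_2T_1.$$
Since the two coefficients are nonnegative and sum to $1$, this is a \emph{convex} combination of $R_1$ and $R_2T_1$. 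As $R_1$ is nonexpansive and $R_2T_1$ is nonexpansive (being a composition of the nonexpansive $R_2$ with the nonexpansive, because averaged, $T_1$), the triangle inequality immediately shows that $R$ is nonexpansive, which proves the lemma.

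Finally I would induct on the number of factors. Writing $S_j:=P_{C_j}\cdots P_{C_1}$, the base case $S_1=P_{C_1}$ is $\tfrac{1}{2}$-averaged, i.e.\ it has averaging ``defect'' $1-\alpha=2^{-1}$. For the inductive step, applying the lemma to $T_1=S_{j-1}$ (defect $2^{-(j-1)}$) and $T_2=P_{C_j}$ (defect $2^{-1}$) shows that $S_j$ has defect $2^{-(j-1)}\cdot 2^{-1}=2^{-j}$, so $S_j$ is $(1-2^{-j})$-averaged; taking $j=r$ gives the claim. The only delicate point is the composition lemma, and the realization that makes it painless is that the nonexpansive part of $T_2T_1$ is a convex combination of two nonexpansive maps. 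Attempting instead to verify averagedness directly through the quadratic inequality of \Cref{fact:firmly_nonexp}\Cref{fact:firmly_nonexp0}, by combining the two per-operator estimates and regrouping the residual terms, leads to a considerably messier (though feasible) computation, so I would avoid that route.
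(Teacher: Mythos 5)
Your proof is correct and follows essentially the same route as the paper's: both expand the composition so that its nonexpansive part appears as a convex combination of the inner operator's nonexpansive part with (outer nonexpansive part)$\circ$(inner operator), and then induct on the number of factors. The only difference is cosmetic: you state the composition lemma for arbitrary averaging constants via the multiplicative defect formula $(1-\beta)=(1-\alpha_1)(1-\alpha_2)$, whereas the paper proves exactly the special case $\alpha_2=\tfrac12$, $\alpha_1=1-2^{-k}$ needed for the induction.
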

\begin{proof}
For each $i\in\{1,\dots,r\}$, the projector $P_{C_i}$ is $1/2$-averaged by \Cref{fact:projection_firmly_nonexapnsive}. To prove the result it therefore suffices to establish the following: if $T$ is $1/2$-averaged and $Q$ is $(1-2^{-k})$-averaged for some $k\in\{1,2,\ldots\}$, then $TQ$ is $(1-2^{-(k+1)})$-averaged. To this end, we first write
  $$T=\frac{1}{2}\Id+\frac{1}{2}R\quad\text{and}\quad Q=\frac{1}{2^k}\Id+\left(1-\frac{1}{2^k}\right)S,$$
for nonexpansive mappings $R$ and $S$. Next, observe that
 \begin{align*}
  TQ  =\frac{1}{2}Q+\frac{1}{2}RQ&=\frac{1}{2^{k+1}}\Id+\frac{2^k-1}{2^{k+1}}S+\frac{2^k}{2^{k+1}}RQ\\
     & =\frac{1}{2^{k+1}}\Id+\frac{2^{k+1}-1}{2^{k+1}}\left(\frac{2^k-1}{2^{k+1}-1}S+\frac{2^k}{2^{k+1}-1}RQ\right) \\
     & =\frac{1}{2^{k+1}}\Id+\left(1-\frac{1}{2^{k+1}}\right)\left(\frac{2^k-1}{2^{k+1}-1}S+\frac{2^k}{2^{k+1}-1}RQ\right).
 \end{align*}
 Since $S$ and $RQ$ are nonexpansive, so is their convex combination. It then follows that $TQ$ is $(1-2^{-(k+1)})$-averaged. 
\end{proof}

\begin{proposition}\label{prop:fixed_points_cyclic_proj}
Let $C_1,C_2,\dots,C_r\subseteq\Eu$ be closed and convex sets with nonempty intersection. Then $$\Fix \left(P_{C_r}P_{C_{r-1}}\cdots P_{C_1}\right) = C_1\cap\ldots\cap C_r. $$
\end{proposition}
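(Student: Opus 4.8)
The plan is to prove the two inclusions separately, the essential tool being the firm nonexpansiveness of each projector established in \Cref{fact:projection_firmly_nonexapnsive}. The inclusion $C_1\cap\cdots\cap C_r\subseteq\Fix(P_{C_r}\cdots P_{C_1})$ is immediate: if $x$ lies in every $C_i$, then $x$ is its own nearest point in each $C_i$, so $P_{C_i}(x)=x$ for all $i$; applying the projectors successively then shows that the composition fixes $x$.

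For the reverse inclusion, the key device is to exploit the assumption that the intersection is nonempty. I would fix an arbitrary $c\in C:=\bigcap_{i=1}^r C_i$ and, given a fixed point $x$ of $T:=P_{C_r}\cdots P_{C_1}$, introduce the intermediate iterates $x_0:=x$ and $x_i:=P_{C_i}(x_{i-1})$ for $i=1,\dots,r$, so that $x_r=T(x)=x=x_0$. Since each $P_{C_i}$ is firmly nonexpansive and satisfies $P_{C_i}(c)=c$, the defining inequality of firm nonexpansiveness, applied with the pair $(x_{i-1},c)$, gives
$$\|x_i-c\|^2+\|x_{i-1}-x_i\|^2\leq\|x_{i-1}-c\|^2\quad\text{for each }i\in\{1,\dots,r\}.$$
Summing these inequalities telescopes the distance-to-$c$ terms and leaves
$$\|x_r-c\|^2+\sum_{i=1}^r\|x_{i-1}-x_i\|^2\leq\|x_0-c\|^2.$$

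Now the crucial observation is that $x_r=x_0$, so the two distance terms cancel and the sum of squared step lengths is forced to be nonpositive, whence $x_{i-1}=x_i$ for every $i$. Consequently $x=x_0=x_1=\cdots=x_r$, and in particular $x=P_{C_i}(x_{i-1})=P_{C_i}(x)$ for each $i$, which means $x\in C_i$ for all $i$; that is, $x\in C$.

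The main obstacle is the reverse inclusion, and it is precisely here that the hypothesis of a nonempty intersection is used: without a common point $c$ to anchor the Fej\'er-type telescoping estimate, one cannot conclude that a fixed point of the composition is simultaneously fixed by each individual projector. The firm nonexpansiveness of the projectors, rather than mere nonexpansiveness, is essential, since it supplies the additional terms $\|x_{i-1}-x_i\|^2$ whose collective vanishing pins down the chain of equalities.
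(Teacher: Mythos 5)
Your proposal is correct and follows essentially the same argument as the paper: the forward inclusion is immediate, and the reverse inclusion uses firm nonexpansiveness of each projector against a common point $c$, telescoped along the intermediate iterates, with the fixed-point condition $x_r=x_0$ forcing all increments $\|x_{i-1}-x_i\|^2$ to vanish. The paper writes this as a single chained inequality rather than a summed system, but the content is identical.
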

\begin{proof}
The inclusion $\Fix \left(P_{C_r}P_{C_{r-1}}\cdots P_{C_1}\right) \supseteq \cap_{i=1}^rC_i$ is immediate, so we need only consider the other inclusion. To this end, let $x\in \Fix \left(P_{C_r}P_{C_{r-1}}\cdots P_{C_1}\right)$ and let $c\in\cap_{i=1}^rC_i$. Since $P_{C_i}$ is firmly nonexpansive by \Cref{fact:projection_firmly_nonexapnsive}, we have
\begin{align*}
\|x-c\|^2 &\geq \|(\Id-P_{C_1})x\|^2 + \|P_{C_1}(x)-c\|^2 \\
          &\geq \left(\|(\Id-P_{C_1})x\|^2+\|(\Id-P_{C_2})P_{C_1}(x)\|^2\right)  + \|P_{C_2}P_{C_1}(x)-c\|^2 \\
          &\geq \ddots \\
          &\geq\left( \|(\Id-P_{C_1})x\|^2+\dots+\|(\Id-P_{C_r})(P_{C_{r-1}}\cdots P_{C_1})(x)\|^2 \right) + \|(P_{C_r}P_{C_{r-1}}\cdots P_{C_1})(x)-c\|^2.
\end{align*}
Since $(P_{C_r}P_{C_{r-1}}\cdots P_{C_1})(x)=x$ by assumption, it follows that
 $$ \|(\Id-P_{C_1})x\|=\dots=\|(\Id-P_{C_r})(P_{C_{r-1}}\cdots P_{C_1})(x)\|=0, $$
which implies that $x\in\cap_{i=1}^rC_i$, as claimed.
\end{proof}

\begin{theorem}[Method of cyclic projections]
Let $C_1,C_2,\dots,C_r\subseteq\Eu$ be closed and convex sets with nonempty intersection. Given an initial point $x_0\in\Eu$, consider the iteration generated by
$$x_{k+1} = \left(P_{C_r}P_{C_{r-1}}\cdots P_{C_1}\right)(x_k),\quad\text{for } k=0,1,2,\ldots.$$
Then $(x_k)_{k=0}^\infty$ converges to a point $x^\star\in\cap_{i=1}^rC_i$.
\end{theorem}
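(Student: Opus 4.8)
The plan is to follow verbatim the three-step recipe laid out just before \Cref{prop:cyclic_proj_averaged}, since the two preceding propositions have already done all of the substantive work. Write $T := P_{C_r}P_{C_{r-1}}\cdots P_{C_1}$, so that the stated iteration is precisely the Banach--Picard iteration $x_{k+1}=T(x_k)$ defined by $T$, with $T$ regarded as a mapping from $\Eu$ into itself. Note that each $C_i$, being nonempty, closed and convex, is Chebyshev by \Cref{fact:projection}, so every $P_{C_i}$ is single-valued; this is what justifies writing the iteration with equality rather than as an inclusion.

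First I would invoke \Cref{prop:cyclic_proj_averaged} to conclude that $T$ is $\alpha$-averaged with $\alpha = 1-2^{-r}\in{]0,1[}$. Next, since $\bigcap_{i=1}^r C_i\neq\emptyset$ by hypothesis, \Cref{prop:fixed_points_cyclic_proj} identifies $\Fix T = \bigcap_{i=1}^r C_i$, which is therefore nonempty. This step does double duty: it guarantees that the iteration has something to converge to and simultaneously ensures that any such limit is a genuine solution of the feasibility problem, not merely a fixed point of an auxiliary operator.

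Finally, I would apply \Cref{th:Opial} with $D=\Eu$. The ambient space $\Eu$ is trivially nonempty, closed and convex, the operator $T$ maps $\Eu$ into $\Eu$, and the two hypotheses of that theorem—$\alpha$-averagedness of $T$ and $\Fix T\neq\emptyset$—have just been verified. The conclusion is that $(x_k)_{k=0}^\infty$ converges to some $x^\star\in\Fix T = \bigcap_{i=1}^r C_i$, which is exactly the assertion.

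There is essentially no obstacle remaining at this stage: the genuine difficulty of the result was entirely absorbed into establishing averagedness (the induction on the averagedness constant in \Cref{prop:cyclic_proj_averaged}) and into the fixed-point characterization (\Cref{prop:fixed_points_cyclic_proj}). The only points demanding a moment's care are purely bookkeeping—confirming that the composite is a self-map of $\Eu$ so that \Cref{th:Opial} applies with $D=\Eu$, and recording the single-valuedness observation above—so the proof amounts to little more than assembling the three ingredients in order.
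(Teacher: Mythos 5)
Your proposal is correct and follows exactly the paper's own argument: the paper likewise combines \Cref{prop:cyclic_proj_averaged}, \Cref{prop:fixed_points_cyclic_proj}, and \Cref{th:Opial} (with $D=\Eu$) in that order. The extra remarks on single-valuedness of the projectors and on $T$ being a self-map of $\Eu$ are fine bookkeeping but add nothing beyond what the paper's two-line proof already contains.
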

\begin{proof}
 The operator $T=P_{C_r}P_{C_{r-1}}\cdots P_{C_1}$ is $(1-2^{-r})$-averaged by \Cref{prop:cyclic_proj_averaged} with $\Fix T=\cap_{i=1}^rC_i\neq\emptyset$ by \Cref{prop:fixed_points_cyclic_proj}. The result immediately follows by applying \Cref{th:Opial}.
\end{proof}

\subsection{The Douglas--Rachford algorithm}\label{sec:DR}

In this section, we introduce another projection algorithm known as the \emph{Douglas--Rachford (DR) algorithm}. It was originally proposed by \cite{DR56} for solving a system of linear equations arising in heat conduction problems, and later extended to solving feasibility problems with arbitrary closed and convex sets by \cite{LM79}. For further details regarding the connection between the original algorithm and the extension of Lions and Mercier, the reader is referred to~\cite[Appendix]{BLM17} as well as \cite{EB92,Svaiter}.  A recent survey on Douglas--Rachford can be found in~\cite{LS18}.

Given two nonempty, closed and convex sets $A,B\subseteq\Eu$, the DR algorithm is the fixed point iteration generated by the \emph{Douglas--Rachford operator}, $T_{A,B}$, defined by
\begin{equation}\label{eq:DRoperator}
x_{k+1}=T_{A,B}(x_k),\quad \text{with } T_{A,B}:=\frac{\Id+R_BR_A}{2}.
\end{equation}
The algorithm iterates by computing an average between the current point and the composition of two reflectors (see~\Cref{fig:DRoperator}). Due to this geometric interpretation, the DR algorithm is sometimes called the \emph{averaged alternating reflections (AAR) method}. In the imaging community, it is also known as the \emph{difference-map algorithm}~(\cite{BCL02}).

\begin{figure}[ht!]
\centering
\includegraphics[width=0.45\textwidth]{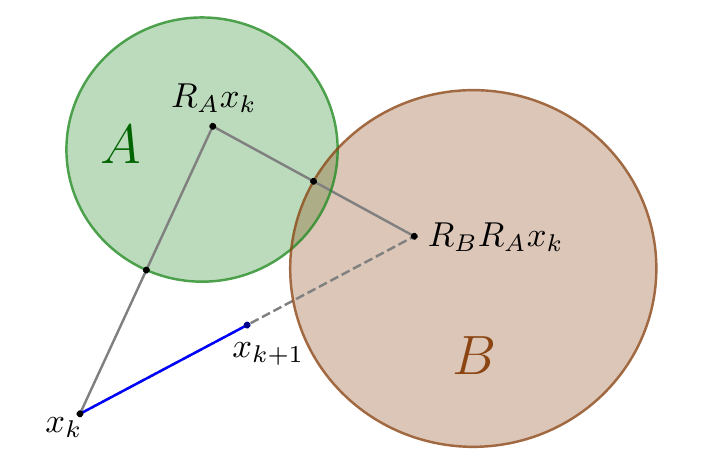}
\caption{Geometric interpretation of the Douglas--Rachford iteration.}\label{fig:DRoperator}
\end{figure}

As in the previous section, we follow the same general recipe with the aim of applying \Cref{th:Opial}. The next result analyzes the nonexpansiveness properties of the DR operator for closed and convex sets.

\begin{proposition}[Nonexpansiveness properties of the DR operator]\label{prop:DRaveraged}
Let $A$ and $B$ be two nonempty, closed and convex subsets of $\Eu$. Then the DR operator $T_{A,B}$ in~\eqref{eq:DRoperator} is $\frac{1}{2}$-averaged and, thus, firmly nonexpansive.
\end{proposition}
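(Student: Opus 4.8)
The plan is to read off the averagedness directly from the structure of the operator and then invoke the results already established in the excerpt, so almost no computation is required. Observe that the definition~\eqref{eq:DRoperator} can be rewritten as
$$
T_{A,B} = \frac{\Id + R_B R_A}{2} = \frac{1}{2}\Id + \frac{1}{2}\bigl(R_B R_A\bigr),
$$
which is exactly the form $T_{A,B} = (1-\alpha)\Id + \alpha R$ with $\alpha = \tfrac{1}{2}$ and $R = R_B R_A$. Hence, to conclude that $T_{A,B}$ is $\tfrac{1}{2}$-averaged, it suffices to show that the operator $R_B R_A$ is nonexpansive.

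To establish this, I would first recall that, since $A$ and $B$ are nonempty, closed and convex, \Cref{fact:projection_firmly_nonexapnsive} guarantees that both reflectors $R_A$ and $R_B$ are nonexpansive. The composition of two nonexpansive operators is itself nonexpansive, since for any $x,y\in\Eu$ one has $\|R_B R_A(x) - R_B R_A(y)\| \leq \|R_A(x) - R_A(y)\| \leq \|x-y\|$. Therefore $R_B R_A$ is nonexpansive, and the displayed decomposition exhibits $T_{A,B}$ as a $\tfrac{1}{2}$-averaged operator by definition.

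Finally, the firm nonexpansiveness follows immediately: having shown that $T_{A,B}$ is $\tfrac{1}{2}$-averaged, I would apply \Cref{fact:firmly_nonexp}\Cref{fact:firmly_nonexpII}, which asserts that every $\alpha$-averaged operator with $\alpha\in\left]0,\tfrac{1}{2}\right]$ is firmly nonexpansive. Since $\tfrac{1}{2}$ lies in this range, the conclusion is immediate.

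I do not anticipate any genuine obstacle here; the entire argument is mechanical once one recognizes the decomposition $T_{A,B} = \tfrac{1}{2}\Id + \tfrac{1}{2}R_B R_A$. The only point requiring care is correctly identifying $R_B R_A$ (rather than $R_A$ or $R_B$ alone) as the nonexpansive operator $R$ in the definition of averagedness, and then relying on the already-proven nonexpansiveness of the individual reflectors so that no inner-product estimate needs to be reproduced.
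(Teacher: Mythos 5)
Your proof is correct and follows essentially the same route as the paper: both arguments invoke \Cref{fact:projection_firmly_nonexapnsive} to get nonexpansiveness of $R_A$ and $R_B$, note that the composition $R_BR_A$ is therefore nonexpansive, read off the $\tfrac{1}{2}$-averaged decomposition $T_{A,B}=\tfrac{1}{2}\Id+\tfrac{1}{2}R_BR_A$, and conclude firm nonexpansiveness via \Cref{fact:firmly_nonexp}\Cref{fact:firmly_nonexpII}. No gaps; your explicit inequality chain for the composition is a minor elaboration of a step the paper states without computation.
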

\begin{proof}
Since $A$ and $B$ are nonempty, closed and convex, the reflectors $R_A$ and $R_B$ are nonexpansive by~\Cref{fact:projection_firmly_nonexapnsive} and, consequently, so too is their composition. We therefore have that $T_{A,B}=\left(1-\frac{1}{2}\right)\Id+$ $\frac{1}{2}R_BR_A$ with  $R_BR_A$ nonexpansive; i.e., $T_{A,B}$ is $\frac{1}{2}$-averaged. The remaining assertion follows from \Cref{fact:firmly_nonexp}\Cref{fact:firmly_nonexpII}.
\end{proof}

Therefore, according to the previous result and taking into account \Cref{th:Opial}, the convergence of the DR algorithm only depends on the nonemptiness of the set of fixed points of the operator. Furthermore, when they exist, these fixed points need to be useful for solving the feasibility problem. Let us show that this is indeed the case.

\begin{proposition}[Fixed points of the DR operator]\label{prop:DRfixedpoints}
Let $A,B\subseteq\Eu$ be nonempty, closed and convex, and let $T_{A,B}$ be the DR operator. Then
\begin{equation}\label{eq:FixTnonemtpy}
\Fix T_{A,B}\neq\emptyset \quad\iff\quad A\cap B\neq\emptyset,
\end{equation}
and, moreover, $P_A(x)\in A\cap B$ for all $x\in\Fix T$.
\end{proposition}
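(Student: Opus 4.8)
The plan is to reduce everything to a single algebraic identity characterizing the fixed points, from which both the equivalence \eqref{eq:FixTnonemtpy} and the ``moreover'' claim will fall out together. First I would observe that the averaging in $T_{A,B}=\tfrac{1}{2}(\Id+R_BR_A)$ is harmless for detecting fixed points: one has
$$x\in\Fix T_{A,B}\iff x=T_{A,B}(x)\iff R_BR_A(x)=x,$$
so that $\Fix T_{A,B}=\Fix(R_BR_A)$. Note that $P_A$ and $P_B$ are single-valued here by \Cref{fact:projection}, since $A$ and $B$ are nonempty, closed and convex; hence all the manipulations below involve genuine points rather than sets.

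The heart of the argument is to unravel the condition $R_BR_A(x)=x$ in terms of projectors. Writing $a:=P_A(x)$, so that $R_A(x)=2a-x$, the defining relation $R_B=2P_B-\Id$ converts $R_BR_A(x)=x$ into
$$2P_B(2a-x)-(2a-x)=x\iff P_B(2a-x)=a.$$
This is the key identity: a point $x$ is fixed if and only if $P_B(R_A(x))=P_A(x)$. From it the ``moreover'' claim is immediate, since the common value $a=P_A(x)$ lies in $A$ (as a projection onto $A$) and simultaneously in $B$ (as a projection onto $B$), whence $P_A(x)=a\in A\cap B$. In particular this already yields the forward implication of \eqref{eq:FixTnonemtpy}: if $\Fix T_{A,B}\neq\emptyset$, pick any fixed point $x$ and conclude that $P_A(x)\in A\cap B$, so the intersection is nonempty.

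For the reverse implication I would simply exhibit a fixed point whenever $A\cap B\neq\emptyset$. Choosing $z\in A\cap B$, the fact that $z$ is its own nearest point in each set gives $P_A(z)=P_B(z)=z$, hence $R_A(z)=R_B(z)=z$ and therefore $T_{A,B}(z)=\tfrac{1}{2}(z+R_BR_A(z))=z$; thus $z\in\Fix T_{A,B}$. The main subtlety — and the one point worth stating carefully — is that the fixed point $x$ itself need \emph{not} belong to $A\cap B$; it is its projection $P_A(x)$, not $x$, that solves the feasibility problem. Identifying this correct candidate is the whole content of the result, and once the reflector algebra above is carried out cleanly there is no further obstacle.
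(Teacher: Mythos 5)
Your proposal is correct and follows essentially the same route as the paper's proof: both reduce $\Fix T_{A,B}$ to $\Fix(R_BR_A)$, derive the key identity $P_B(2P_A(x)-x)=P_A(x)$ characterizing fixed points, and obtain the ``moreover'' claim and the forward implication from it, with the reverse implication coming from the inclusion $A\cap B\subseteq\Fix(R_BR_A)$. The only difference is one of exposition — you verify that inclusion explicitly via $P_A(z)=P_B(z)=z$, which the paper simply asserts.
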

\begin{proof}
Observe that $A\cap B\subseteq\Fix(R_BR_A)=\Fix T_{A,B}.$ Moreover, $x\in\Fix(R_BR_A)$ if and only if
	\begin{equation*}
	x=(2P_B-\Id)(2P_A-\Id)(x)=2P_B(2P_A(x)-x)-2P_A(x)+x;
	\end{equation*}
that is, $x\in\Fix T_{A,B}$ if and only if $P_B(2 P_A(x)-x)=P_A(x)$.
Consequently, we have $P_A(\Fix T_{A,B})=A\cap B$ and the result follows.
\end{proof}

We are now in position to derive the following theorem which establishes the convergence of the DR algorithm for convex feasibility problems.

\begin{theorem}[Douglas--Rachford method]\label{th:DR}
Let $A,B\subseteq\Eu$ be nonempty, closed and convex sets, and consider the DR operator $T_{A,B}$. Given any initial point $x_0\in\Eu$, consider the iteration generated by
\begin{equation}\label{eq:DRiterationth}
x_{k+1}=T_{A,B}(x_k),\quad\text{for }k=0,1,2,\ldots.
\end{equation}
Exactly one of the following alternatives holds.
\begin{enumerate}
\item $A\cap B\neq\emptyset$ and $(x_k)_{k=0}^{\infty}$ converges to a point $x^\star\in\Fix T_{A,B}$ with $P_A(x^\star)\in A\cap B$.\label{th:DR_I}
\item $A\cap B=\emptyset$ and $\|x_k\|\to+\infty$.\label{th:DR_II}
\end{enumerate}
\end{theorem}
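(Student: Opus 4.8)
The plan is to prove the theorem as a direct corollary of the machinery already assembled, essentially by combining \Cref{prop:DRaveraged}, \Cref{prop:DRfixedpoints}, \Cref{th:Opial}, and \Cref{fact:pazy}. The first observation is that the two alternatives are mutually exclusive and exhaustive precisely because $A\cap B\neq\emptyset$ and $A\cap B=\emptyset$ partition all possibilities; by \eqref{eq:FixTnonemtpy} of \Cref{prop:DRfixedpoints}, these correspond exactly to $\Fix T_{A,B}\neq\emptyset$ and $\Fix T_{A,B}=\emptyset$ respectively. So the real content is to verify that each hypothesis on $\Fix T_{A,B}$ yields the claimed asymptotic behavior.

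First I would handle alternative \Cref{th:DR_I}. By \Cref{prop:DRaveraged}, the operator $T_{A,B}$ is $\tfrac{1}{2}$-averaged (in particular $\alpha$-averaged with $\alpha=\tfrac12\in\,]0,1[$) and maps $\Eu$ into $\Eu$, so the domain $D=\Eu$ is trivially nonempty, closed and convex. Assuming $A\cap B\neq\emptyset$, \Cref{prop:DRfixedpoints} gives $\Fix T_{A,B}\neq\emptyset$, so the hypotheses of \Cref{th:Opial} are met. Applying it to the iteration \eqref{eq:DRiterationth} yields convergence of $(x_k)_{k=0}^\infty$ to some $x^\star\in\Fix T_{A,B}$. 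The final clause $P_A(x^\star)\in A\cap B$ is then immediate from the ``moreover'' part of \Cref{prop:DRfixedpoints}.

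Next I would treat alternative \Cref{th:DR_II}. Assuming $A\cap B=\emptyset$, \eqref{eq:FixTnonemtpy} gives $\Fix T_{A,B}=\emptyset$. Since $T_{A,B}$ is $\alpha$-averaged with $\alpha=\tfrac12$, \Cref{fact:pazy} applies directly and yields $\|T_{A,B}^k(x_0)\|\to\infty$; because $x_k=T_{A,B}^k(x_0)$ by \eqref{eq:DRiterationth}, this is exactly $\|x_k\|\to+\infty$. Finally, since the dichotomy $A\cap B\neq\emptyset$ versus $A\cap B=\emptyset$ is genuinely exclusive, and the two cases produce genuinely different behaviors (a convergent, hence bounded, sequence in the first case versus an unbounded sequence in the second), exactly one alternative holds; I would note that \Cref{fact:pazy} is what guarantees these two cases do not overlap, since a bounded orbit forces $\Fix T_{A,B}\neq\emptyset$.

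I do not expect any genuine obstacle here: every ingredient has already been established, and the proof is a matter of correctly invoking the earlier results and checking their hypotheses. The only point requiring minor care is the trivial verification that $\Eu$ itself serves as the closed convex domain $D$ required by \Cref{th:Opial}, and the bookkeeping that identifies the iterates $x_k$ with the iterated images $T_{A,B}^k(x_0)$ so that \Cref{fact:pazy} applies verbatim.
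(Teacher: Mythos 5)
Your proposal is correct and follows essentially the same route as the paper's own proof: invoke \Cref{prop:DRaveraged} for $\tfrac{1}{2}$-averagedness, \Cref{prop:DRfixedpoints} to translate $A\cap B\neq\emptyset$ (resp.\ $=\emptyset$) into $\Fix T_{A,B}\neq\emptyset$ (resp.\ $=\emptyset$), then \Cref{th:Opial} in the consistent case and \Cref{fact:pazy} in the inconsistent case. Your additional remarks on exclusivity of the alternatives and on $D=\Eu$ being a valid domain are fine bookkeeping that the paper leaves implicit.
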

\begin{proof}
The operator $T_{A,B}$ is $\frac{1}{2}$-averaged by~\Cref{prop:DRaveraged}. Suppose first that $A\cap B\neq\emptyset$. Then, according to \Cref{prop:DRfixedpoints}, we have $\Fix T_{A,B}\neq\emptyset$ and therefore we may apply \Cref{th:Opial} to deduce that $$x_k\to x^\star\in\Fix T_{A,B}.$$
Furthermore, \Cref{prop:DRfixedpoints} yields that $P_A(x^\star)\in A\cap B$ and thus \Cref{th:DR_I} is proved. Consider now the case when $A\cap B=\emptyset$. Again, by applying \Cref{prop:DRfixedpoints}, we deduce that $\Fix T_{A,B}=\emptyset$ and hence \Cref{th:DR_II} immediately follows from \Cref{fact:pazy}.
\end{proof}

\begin{remark}[Shadow sequence]\label{rem:DR_shadow}
We emphasize that, in general, the limit point $x^\star$ in~\Cref{th:DR}\Cref{th:DR_I} need not be contained the intersection. Indeed, this is a consequence of the fact that $A\cap B$ can be a strict subset of $\Fix T_{A,B}$. To generate a solution to the feasibility problem using \Cref{th:DR} is therefore necessary to compute $P_A(x^\star)$ (see \Cref{fig:DRscenarios1,fig:DRscenarios2}). In other words, the sequence of interest is not the DR sequence $(x_k)_{k=0}^{\infty}$ itself but rather $(P_A(x_k))_{k=0}^{\infty}$, which is known as the \emph{shadow~sequence}. Also note that, as the projector is a continuous mapping, it immediately follows that
\begin{equation*}
P_A(x_k) \to a^\star=P_A(x^\star)\in A\cap B.
\end{equation*}
\end{remark}

\begin{remark}[Displacement vector]
In the inconsistent case, one typically looks for appropriate surrogate solutions. In the two set case with $A\cap B=\emptyset$, the notion of a \emph{best approximation pair} provides one possibility. A pair of points $(a,b)\in A\times B$ is said to be a \emph{best approximation pair} for $(A,B)$ if
$$\|a-b\|=d(A,B):=\inf\left\{ \|x-y\|: (x,y)\in A\times B\right\}.$$
The finer properties of the DR operator in the potentially inconsistent case have been studied by \cite{BCL04}. Their analysis relies on the notion of \emph{displacement vector}, which is defined as
$v:=P_{\overline{A-B}}(0)$ (i.e., $v$ is the element of minimum norm in the closure of $A-B$).
One can easily check that $v$ measures the gap between the sets, since $\|v\|=d(A,B)$. In addition, $v\in A-B$ if and only if the distance $d(A,B)$ is attained, which means that there exists a best approximation pair. In this case, any inconsistent feasibility problem can be turned into a consistent one described as $A\cap(v+B)$ and the sequence $(x_k)_{k=0}^{\infty}$ generated by the DR algorithm~\eqref{eq:DRiterationth} satisfies
$x_{k+1}-x_k \to v$.
Furthermore, \cite{BM17} showed that when $v\in A-B$, we have
\begin{equation*}
P_A(x_k)  \to a^\star\in A\cap(v+B).
\end{equation*}

Finally, recall that, according to \Cref{th:DR}\Cref{th:DR_II}, the DR sequence is unbounded when the sets do not intersect. Despite this, the latter result asserts that the shadow sequence remains convergent to a point $a^\star$ so long as the distance between the sets, $d(A,B)$, is attained. In this case, $(a^\star, P_B(a^\star))$ would constitute a best approximation pair (see~\Cref{fig:DRscenarios3}).
\end{remark}

\begin{figure}[ht!]
\centering
\subfigure[$A\cap B\neq\emptyset$ and $x^\star\in A\cap B$\label{fig:DRscenarios1}]{\includegraphics[width=0.325\textwidth]{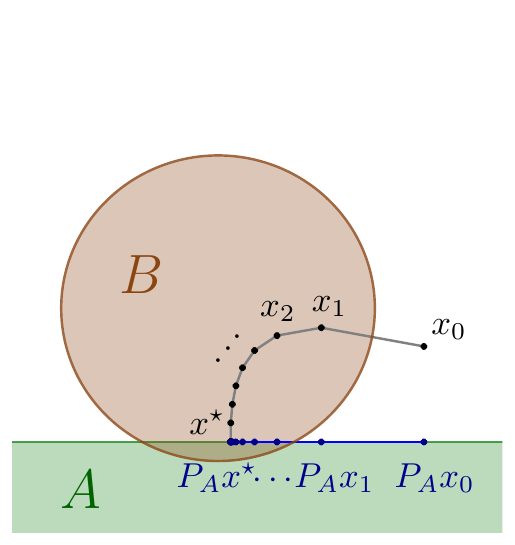}}
\subfigure[$A\cap B\neq\emptyset$ but  $x^\star\not\in A\cap B$\label{fig:DRscenarios2}]{\includegraphics[width=0.325\textwidth]{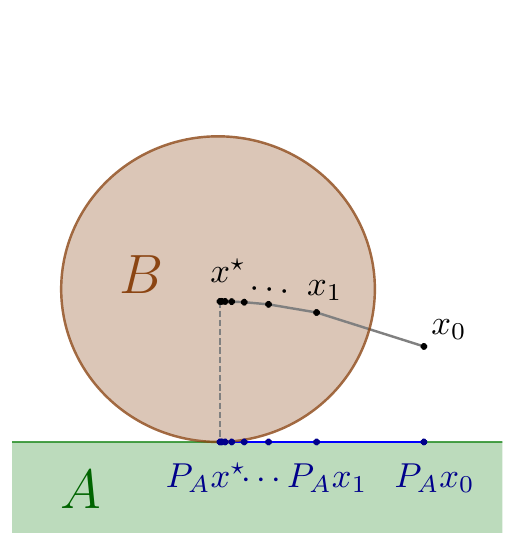}}
\subfigure[$A\cap B=\emptyset$\label{fig:DRscenarios3}]{\includegraphics[width=0.325\textwidth]{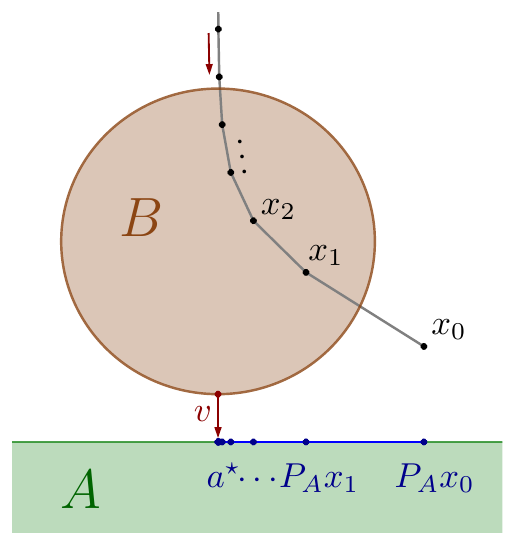}}
\caption{Behavior of the Douglas--Rachford algorithm in three possible scenarios.}\label{fig:DRscenarios}
\end{figure}

\subsection{Product space reformulations and Douglas--Rachford variants}\label{subsec:ProductSpace}
Of the algorithms presented in the previous two sections, only the method of cyclic projections can be applied to the feasibility problem~\eqref{eq:FeasibilityProblem} (without modification) when $r\geq 3$. A naive approach to extending the DR algorithm to the case of three sets $A, B, C\subseteq\Eu$ would be to consider the fixed point iteration generated by
\begin{equation}\label{eq:DRoperator3sets}
T_{A,B,C}:=\frac{\Id+R_CR_BR_A}{2}.
\end{equation}
The corresponding iteration is convergent, since the operator $T_{A,B,C}$ in~\eqref{eq:DRoperator3sets} is firmly nonexpansive with a nonempty set of fixed points, provided that the three sets intersect (see \Cref{exercise:3setDRop}). Unfortunately, the operator's fixed points can not always be used to produce an intersection point, as is shown in \Cref{fig:DRmanysets_fail}.
\begin{figure}[ht]
\centering
\includegraphics[width=0.45\textwidth]{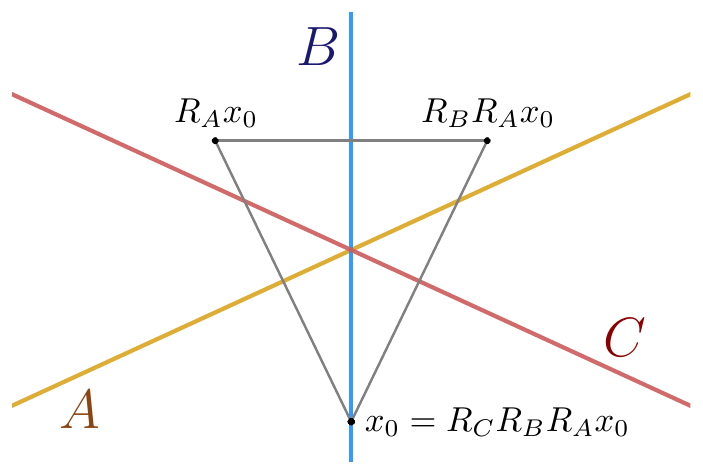}
\caption{Failure of the $3$-sets Douglas--Rachford iteration.}\label{fig:DRmanysets_fail}
\end{figure}

\begin{exercise}\label{exercise:3setDRop}
Let $A,B,C\subseteq\Eu$ be nonempty, closed and convex sets.	
Show that the operator $T_{A,B,C}$, defined in \eqref{eq:DRoperator3sets}, is firmly nonexpansive with $A\cap B\cap C\subseteq\Fix T_{A,B,C}$. Deduce that its fixed point iteration converges whenever $A\cap B\cap C\neq\emptyset$.
\end{exercise}

The classical way to apply the original two-set DR algorithm to problems having $r\geq 3$ sets is to use the following \emph{product space reformulation}, due to \cite{Pierra}. To this end, consider the Euclidean product space
\begin{equation}\label{eq:productHilbertspace}
\mathbf{\Eu}:=\Eu^r=\Eu\times \stackrel{(r)}{\cdots} \times \Eu,
\end{equation}
endowed with the inner product
$$\langle \bs{x},\bs{y}\rangle := \sum_{i=1}^r\langle x_i,y_i\rangle, \quad \text{for all } \bs{x}=(x_i)_{i=1}^r, \bs{y}=(y_i)_{i=1}^r\in\mathbf{\Eu}.$$
Consider the two-set feasibility problem
\begin{equation}\label{eq:prod_fp}
 \text{Find~}\mathbf{x}=(x_1,x_2,\dots,x_r)\in\mathbf{C}\cap\mathbf{D},
\end{equation}
where the constraint sets $\mathbf{C}$ and $\mathbf{D}$ are given by
\begin{equation}\label{eq:CD}
\bs{C}:=C_1\times C_2\times \cdots\times C_r \quad\text{and}\quad \bs{D}:=\{(x,x,\ldots,x)\in\mathbf{\Eu} :x\in\Eu\}.
\end{equation}
Although the set $\bs{D}$, sometimes called the \emph{diagonal} subspace, is always a closed subspace, the properties of $\bs{C}$ are largely inherited from the individual constraint sets. For instance, $\bs{C}$ is nonempty, closed and convex precisely when $C_1,\ldots,C_r$ are. Note that any solution $\mathbf{x}$ to \eqref{eq:prod_fp} satisfies $x_1=x_2=\dots=x_r$.

It is easily seen that the original $r$-set feasibility problem~\eqref{eq:FeasibilityProblem} is equivalent to the two-set problem~\eqref{eq:prod_fp} in the following sense
\begin{equation}\label{eq:PS_equiv_feasibility}
x\in\bigcap_{i=1}^r C_i\subseteq \Eu \quad \iff \quad \bs{x}=(x,x,\ldots,x)\in \bs{C}\cap\bs{D}\subseteq\mathbf{\Eu}.
\end{equation}
In other words, an $r$-set feasibility problem in $\Eu$ can always be formulated as a two-set feasibility problem in $\mathbf{\Eu}$. It is worth mentioning that when the number of constraints, $r$, is large, this formulation can become numerically inefficient because it requires computations to be performed in a space with much higher dimension. In some special cases, this can be avoided (see \Cref{exercise:averaged_projections}).

In order to apply projection methods to the reformulated problem, it is necessary to have access to the projectors onto the sets $\bs{C}$ and $\bs{D}$. As we summarize in the following proposition, this is indeed the case whenever the projectors onto the underlying constraint sets in the original problem, $C_1,\ldots,C_r$, are available.
\begin{proposition}[Product-space projectors]\label{prop:Product_Space}
Let $\bs{x}=(x_1,\ldots,x_r)\in\bs{\Eu}$. The projectors onto the sets $\bs{C}$ and $\bs{D}$ in \eqref{eq:CD} at $\bs{x}$ are given by
  $$ P_{\bs{C}}(\bs{x})=P_{C_1}(x_1)\times P_{C_2}(x_2)\times\dots\times P_{C_r}(x_r)\quad\text{and}\quad P_{\bs{D}}(\bs{x}) = \left(\frac{1}{r}\sum_{i=1}^rx_i, \frac{1}{r}\sum_{i=1}^rx_i, \ldots, \frac{1}{r}\sum_{i=1}^rx_i\right). $$
\end{proposition}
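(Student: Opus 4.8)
The plan is to verify each projector formula directly from the characterization of the projection onto a convex set given in \Cref{fact:projection}, namely that $\bs{p} = P_{\bs{C}}(\bs{x})$ if and only if $\bs{p}\in\bs{C}$ and $\langle \bs{c}-\bs{p},\bs{x}-\bs{p}\rangle\leq 0$ for all $\bs{c}\in\bs{C}$ (and similarly for $\bs{D}$). Since both $\bs{C}$ and $\bs{D}$ are nonempty, closed and convex, they are Chebyshev, so in each case it suffices to exhibit a candidate point and check that it satisfies this variational inequality.

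For the set $\bs{C}=C_1\times\cdots\times C_r$, the key observation is that the product inner product on $\bs{\Eu}$ decouples coordinatewise: for any $\bs{c}=(c_1,\ldots,c_r)\in\bs{C}$ and candidate $\bs{p}=(P_{C_1}(x_1),\ldots,P_{C_r}(x_r))$, we have
$$ \langle \bs{c}-\bs{p},\bs{x}-\bs{p}\rangle = \sum_{i=1}^r \langle c_i - P_{C_i}(x_i), x_i - P_{C_i}(x_i)\rangle. $$
Each summand is nonpositive by applying \Cref{fact:projection} to $P_{C_i}(x_i)$ in $\Eu$, since $c_i\in C_i$ ranges freely and independently over each factor. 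As $\bs{p}\in\bs{C}$ by construction, this establishes the formula for $P_{\bs{C}}$.

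For the diagonal set $\bs{D}$, I would note that $\bs{D}$ is a subspace, so the characterization sharpens to the orthogonality condition $\langle \bs{d},\bs{x}-\bs{p}\rangle = 0$ for all $\bs{d}\in\bs{D}$ (testing against both $\bs{d}$ and $-\bs{d}$). Writing the candidate as $\bs{p}=(\bar{x},\ldots,\bar{x})$ with $\bar{x}=\frac{1}{r}\sum_{i=1}^r x_i$ and a generic $\bs{d}=(d,\ldots,d)$, the condition becomes
$$ \left\langle d, \sum_{i=1}^r (x_i-\bar{x})\right\rangle = \left\langle d, \sum_{i=1}^r x_i - r\bar{x}\right\rangle = 0, $$
which holds for every $d\in\Eu$ precisely by the choice of $\bar{x}$ as the average. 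Since $\bs{p}\in\bs{D}$, this completes the verification.

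The computation here is entirely routine; the only point requiring slight care is the passage from the inequality in \Cref{fact:projection} to the equality (orthogonality) characterization valid for subspaces, which is exactly the content of the earlier affine-projector exercise and can be invoked or reproved in one line. There is no genuine obstacle, as both formulae reduce to a coordinatewise or averaging identity once the decoupled structure of the product inner product is exploited.
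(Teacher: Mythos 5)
Your proof is correct under the standing assumption of this section that $C_1,\ldots,C_r$ are nonempty, closed and convex, but for the first formula it takes a different route from the one the paper intends (the paper leaves the proof as an exercise, with a hint). For $P_{\bs{C}}$ the paper's suggested argument works directly from \Cref{def:proj}: since $\|\bs{x}-\bs{c}\|^2=\sum_{i=1}^r\|x_i-c_i\|^2$, minimization over $\bs{c}\in\bs{C}$ decouples into $r$ independent minimizations, so the set of minimizers is exactly the Cartesian product of the coordinatewise minimizers. That argument needs no convexity and handles set-valued projectors--which is precisely why the formula is stated with Cartesian products, and why it can be reused verbatim in \Cref{sec:Nonconvex}, where the product-space reformulation is applied to the nonconvex constraint sets of the queens problem. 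Your argument instead verifies the variational inequality of \Cref{fact:projection}; this is perfectly valid in the convex setting but requires each $C_i$ to be convex and tacitly treats each $P_{C_i}$ as single-valued, so it proves a slightly narrower statement. What it buys is uniformity: both halves of the proposition follow from the same characterization. Your treatment of $P_{\bs{D}}$ coincides with the paper's intended one: $\bs{D}$ is a subspace, the inequality of \Cref{fact:projection} upgrades to orthogonality of the residual (your $\pm\bs{d}$ argument, i.e.\ the content of the earlier affine-projector exercise), and the identity $\sum_{i=1}^r\left(x_i-\bar{x}\right)=0$ with $\bar{x}=\frac{1}{r}\sum_{i=1}^r x_i$ finishes the verification.
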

\begin{proof}
The proof is left as an exercise.  
\end{proof}

\begin{exercise}
Prove \Cref{prop:Product_Space}.
\emph{Hint:}~To prove the formula for $P_C$, use the definition directly. To prove the formula for $P_D$, note that $D$ is a subspace and use \Cref{fact:projection}.	
\end{exercise}	

\begin{exercise}[The method of averaged projections]\label{exercise:averaged_projections}
Suppose $C_1, C_2, \dots, C_r\subseteq\Eu$ are closed and convex sets with nonempty intersection. Given a point $x_0\in\Eu$, the \emph{method of averaged projections} is the fixed point iteration governed by
  $$ x_{k+1}=T(x_k),\text{~~where~~}T=\frac{P_{C_r}+\cdots + P_{C_1}}{r}. $$
By applying the method of cyclic projections to \eqref{eq:prod_fp}, show that $(x_k)_{k=0}^\infty$ converges to a point in $\cap_{i=1}^rC_i$.
\end{exercise}

The product space reformulation explained above leads to the following DR scheme that can be applied to $r$ closed and convex sets $C_1,C_2,\ldots,C_r\subseteq\Eu$.

\paragraph{Douglas--Rachford in the product space.}
Given $r$ arbitrary starting points $x_{1,0},x_{2,0},\ldots,x_{r,0}\in\Eu$, the iterative scheme can be expressed as
\begin{equation}\label{eq:DR_productspace}
\begin{aligned}
&\text{for } k=0, 1, 2, \ldots:\\
&\left\lfloor\begin{array}{l}
p_k=\frac{1}{r}\sum_{i=1}^r x_{i,k},\\
\text{for } i=1, 2,\ldots, r:\\
\left\lfloor\begin{array}{l}
x_{i,k+1}=\frac{1}{2}x_{i,k}+\frac{1}{2}R_{C_i}(2p_k -x_{i,k}).
\end{array}\right.
\end{array}\right.
\end{aligned}
\end{equation}
Observe that \eqref{eq:DR_productspace} is just the iteration
\begin{equation*}
\bs{x}_{k+1}=\frac{1}{2}\bs{x}_k+\frac{1}{2}R_{\bs{C}}R_{\bs{D}}(\bs{x}_k),\quad\text{ for } k=0,1,2,\ldots,
\end{equation*}
where $\bs{C}$ and $\bs{D}$ are the product and the diagonal sets defined in~\eqref{eq:CD}, and $\bs{x}_k:=(x_{1,k},x_{2,k},\ldots,x_{r,k})$.
Consequently, \Cref{th:DR} guarantees convergence of the sequences generated by \eqref{eq:DR_productspace}.

We remark that the choice to compute the reflection with respect to the diagonal $\bs{D}$ first is deliberate, so that the shadow sequence, $(P_{\bs{D}}(x_k))_{k=0}^{\infty}\subseteq\mathbf{\Eu}$, can be unambiguously identified with the sequence $(p_k)_{k=0}^{\infty}$ in $\Eu$ (rather than $\mathbf{\Eu}$). The latter will converge to a common point of the sets, whenever such a point exists (see~\Cref{fig:DRproductspace}).

\begin{figure}[ht!]
\centering
\includegraphics[width=0.45\textwidth]{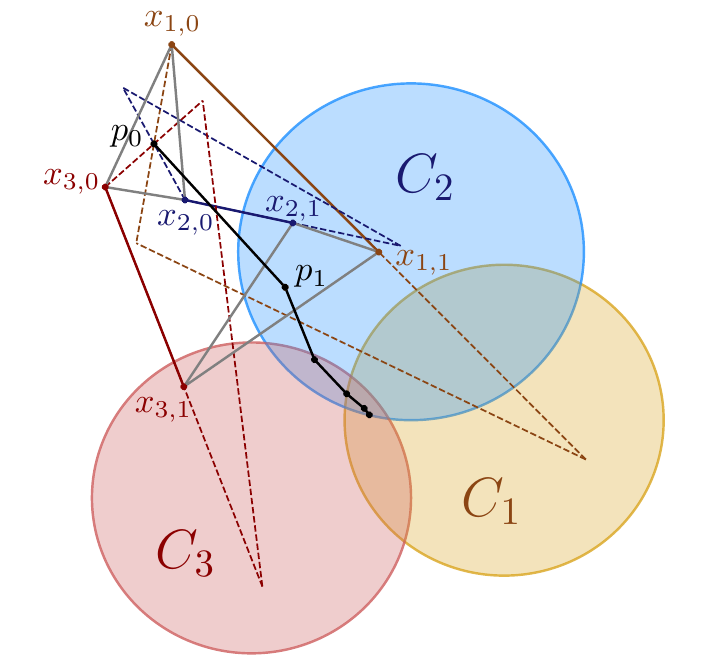}
\caption{Illustration of the Douglas--Rachford iteration in the product space.}\label{fig:DRproductspace}
\end{figure}

Note that the iteration in \eqref{eq:DR_productspace} has the advantage that each of the individual projectors, $P_{C_i}$, can be computed in parallel. On the other hand,  as many points as  constraint sets need to be stored at each step to compute the next iteration. This makes the algorithm computationally inefficient when the number of constraints is large.

One possible modification of the DR algorithm which avoids using a product reformulation is based on cyclic application of DR operators.

\subsection{Cyclic Douglas--Rachford methods}
Instead of the operator in~\eqref{eq:DRoperator3sets},  \cite{BT14} introduced the \emph{cyclic DR operator} defined as
\begin{equation}\label{eq:DRoperator_cyclic}
T_{[C_1,C_2,\ldots,C_r]}:=T_{C_r,C_1}T_{C_{r-1},C_r}\cdots T_{C_2,C_3}T_{C_1,C_2}.
\end{equation}
The fixed point iteration generated by this operator, known as the \emph{cyclic DR method}, can be applied to convex feasibility problems defined by an arbitrary number of sets, without recourse to the product space reformulation \cite[see][Theorem~3.2]{BT14}. Note the operator \eqref{eq:DRoperator_cyclic} can be viewed as
cyclically applying the classical DR method to pairs of sets (see~\Cref{fig:DRcyclic}). The analysis of the method in the inconsistent case was developed in~\cite{BT15}, and a further extension of the algorithm has recently been proposed by~\cite{CG18}, which incorporates $k$-sets-DR operators
(of type~\eqref{eq:DRoperator3sets} when $k=3$).
Observe that
$$T_{[A,B]}=T_{B,A}T_{A,B} \neq T_{A,B},$$
that is, \eqref{eq:DRoperator_cyclic} does not coincide with the classical DR operator for $r=2$. Alternatively, \cite{anchored} proposed the \emph{cyclically anchored DR method}, which is defined by the operator
\begin{equation}\label{eq:DRoperator_anchored}
T_{C_1,[C_2,\ldots,C_r]}:=T_{C_1,C_r}T_{C_1,C_{r-1}}\cdots T_{C_1,C_2}.
\end{equation}
Here the set $C_1$ is called the \emph{anchor set}.
In contrast to the cyclic DR method, the latter scheme does reduce to the original DR  scheme when dealing with only two~sets. Also, when $C_1=\Eu$, observe that the cyclically anchored DR method coincides with the method of cyclic projections, since $T_{\Eu,C_i}=P_{C_i}$.

\begin{figure}[ht!]
\centering
\subfigure[Cyclic Douglas--Rachford method\label{fig:DRcyclic}]{\includegraphics[width=0.45\textwidth]{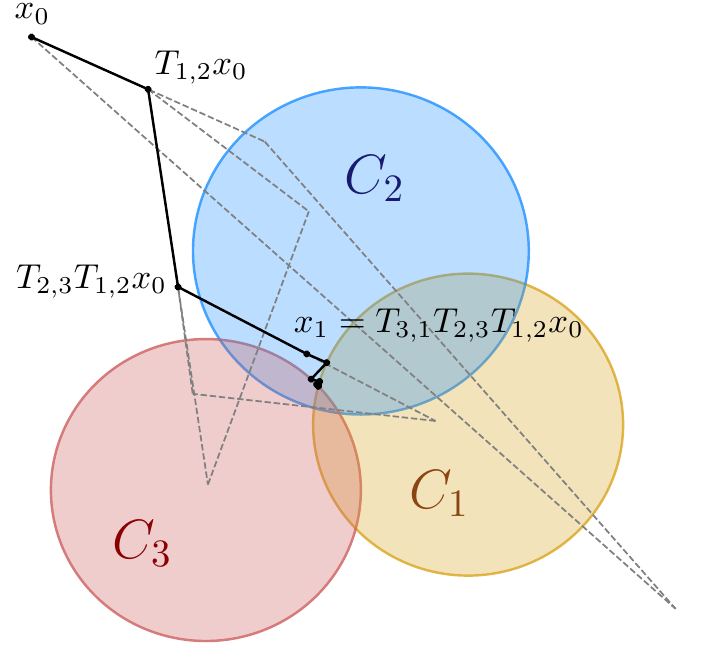}}\hspace{0.05\textwidth}\subfigure[Cyclically anchored Douglas--Rachford method\label{fig:DRanchored}]{\includegraphics[width=0.45\textwidth]{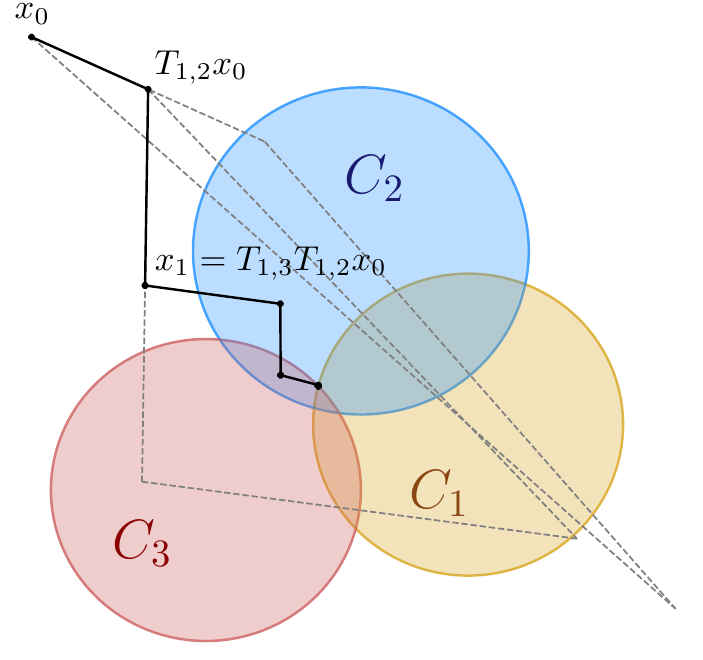}}
\caption{Illustration of two cyclic versions of Douglas--Rachford for finitely many sets.}\label{fig:DRmanysets}
\end{figure}

\subsection{Some modified and relaxed versions}
To conclude this section, we briefly mention some other modifications and relaxations of the classical DR algorithm in the literature. Such variants are typically constructed by relaxing or modifying some of the involved operators, so that parameters can be incorporated into the scheme. The introduction of these parameters, which can be tuned, allows the possibility of accelerating the method or even completely changing the dynamics of the algorithm.

In what follows, we present four such variants. Each of their iterations are illustrated in~\Cref{fig:DRrelaxed}.

\paragraph{Generalized Douglas--Rachford.} The \emph{generalized Douglas--Rachford (GDR) method} is the most evident generalization, which was in fact the method studied by \cite{EB92}. The algorithm relies on iteratively applying a general $\alpha$-averaged version of the DR operator, that is,
\begin{equation}\label{eq:GDR}
x_{k+1}=T_{A,B,\alpha}(x_k):=\left(1-{\alpha}\right)x_k+{\alpha}R_BR_A(x_k), \quad \text{with } \alpha\in{]}0,1{[}.
\end{equation}
Note that for $\alpha=\frac{1}{2}$ it becomes the classical DR algorithm.

\paragraph{Relaxed Averaged Alternating Reflections.} \cite{L08} proposed the \emph{relaxed averaged alternating reflections (RAAR) method}, whose iteration is defined as an average between the classical DR and the projection onto the first set, that is,
\begin{equation*}
x_{k+1}=\left(1-\beta\right)P_A(x_k)+\beta T_{A,B}(x_k), \quad \text{with } \beta\in{]}0,1{[}.
\end{equation*}
Unlike the classical DR method, this scheme converges even for inconsistent feasibility problems (without turning to the shadow sequence) as long as the distance between the sets is attained. In this case, the method provides a best approximation pair. It is worth mentioning that a more general relaxed version of DR has been recently proposed by \cite{Thao2018}.

\paragraph{Circumcentered Douglas--Rachford.} Motivated by the ``{spiraling dynamics}'' exhibited by DR when applied to subspaces (see~\Cref{fig:DRsubs1}), \cite{BBS18} proposed a modification of the algorithm termed the \emph{circumcentered Douglas--Rachford (CDR) method}. In this method, the next iterate is computed as the circumcenter of the triangle implicitly generated by the classical DR iteration; that~is,
\begin{equation}\label{eq:CDR}
x_{k+1}=C_T[x_k,R_A(x_k),R_BR_A(x_k)],
\end{equation}
where $C_T[a,b,c]$ denotes the circumcenter of the triangle of vertices $a$, $b$ and $c$.
This method requires the sets to be closed subspaces since, otherwise the circumcenter operator is not
 necessarily well-defined. In fact, even when it is, the method may still fail to converge. An example
with two intersecting balls in $\mathbb{R}^2$ is shown in \Cref{fig:circ_balls}. For one-dimensional subspaces, however, the algorithm always converges in one iteration.

\paragraph{Averaged Alternating Modified Reflections.} \label{sec:AAMR} In the particular case when the DR algorithm is applied to closed (affine) subspaces, it does not yield just any point in the intersection but rather the intersection nearest to the initial one, i.e., $P_{A\cap B}(x_0)$ \cite[see, e.g.,][Corollaries~4.4 and 4.5]{BCNPW14}. In this context, the method can be used to solve best approximation problems. This is not the case for arbitrary convex sets, as shown in \Cref{fig:DRsubspaces}.
\cite{AAMR} proposed the \emph{averaged alternating modified reflections (AAMR) method} to solve best approximation problems described by arbitrary closed and convex sets. This algorithm can be seen as a variant of DR where the reflection steps are slightly modified. For a given point $q\in\Eu$, the iteration is given by
\begin{equation}\label{eq:AAMR}
x_{k+1}=\left(1-\alpha\right)x_k+\alpha(2\beta P_{B-q}-\Id)(2\beta P_{A-q}-\Id)(x_k), \quad \text{with } \alpha\in{]}0,1{]},\beta\in{]}0,1{[}.
\end{equation}
Under a constraint qualification on the sets at the point $q$, the scheme converges to a point $x^\star$ such that $P_A(x^\star+q)=P_{A\cap B}(q)$. The convergence of the sequence $(x_k)_{k=0}^\infty$ for the case $\alpha=1$ was proved by \cite{AAMR_asymptotic}. Observe that the DR algorithm can be obtained as the limit case $\beta=1$ in~\eqref{eq:AAMR} with $\alpha=\frac{1}{2}$ and $q=0$.

\begin{figure}[ht!]
\centering
\vspace{0.8cm}
\subfigure[Both sets $A$ and $B$ are closed subspaces. The DR algorithm converges to $x^\star=P_{A\cap B}(x_0)$\label{fig:DRsubs1}]{\includegraphics[width=0.45\textwidth]{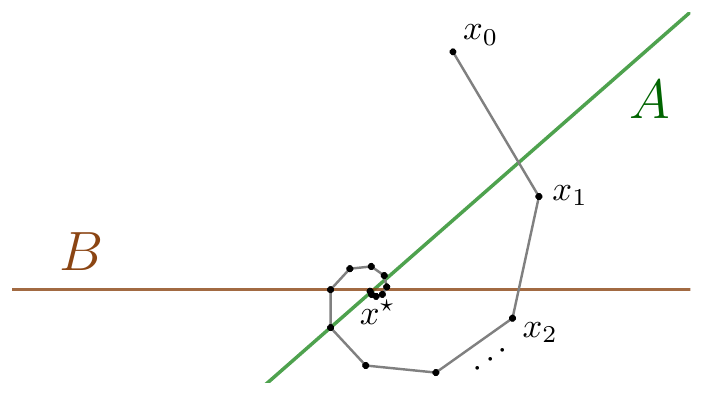}}\hspace{0.05\textwidth}
\subfigure[$B$ is a closed subspace and $A$ is a halfspace. The DR algorithm converges to some point $x^\star\in A\cap B$]{{\includegraphics[width=0.45\textwidth]{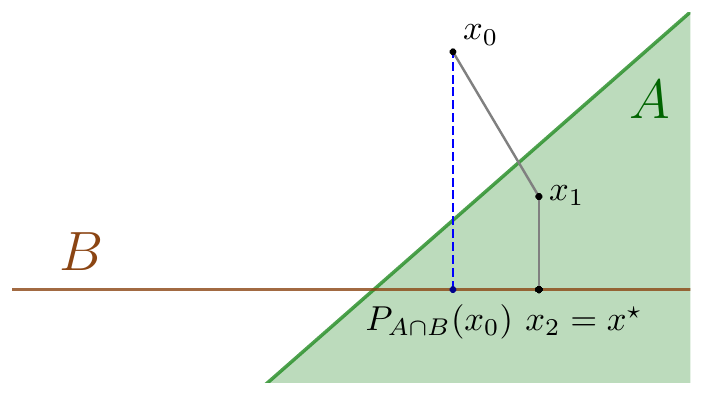}}}
\caption{Failure of the DR method for solving the best approximation problem for arbitrary convex sets.}\label{fig:DRsubspaces}
\end{figure}

\begin{figure}[ht!]
\centering
\subfigure[GDR with $\alpha=0.8$\label{fig:GDR}]{\includegraphics[width=0.4\textwidth]{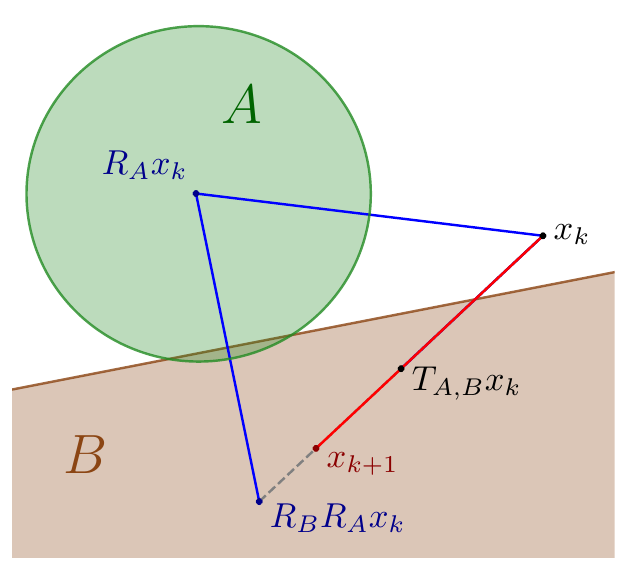}}\hspace{0.1\textwidth}%
\subfigure[RAAR with $\beta=0.4$\label{fig:RAAR}]{\includegraphics[width=0.4\textwidth]{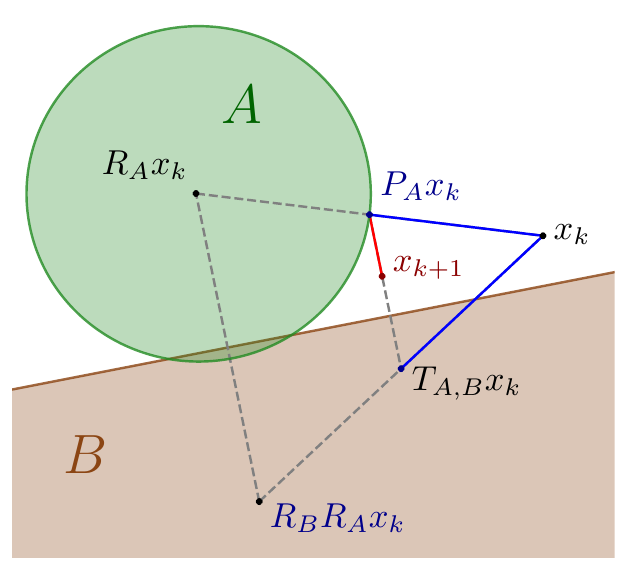}}\\[0.5cm]

\subfigure[CDR\label{fig:CDR}]{\includegraphics[width=0.4\textwidth]{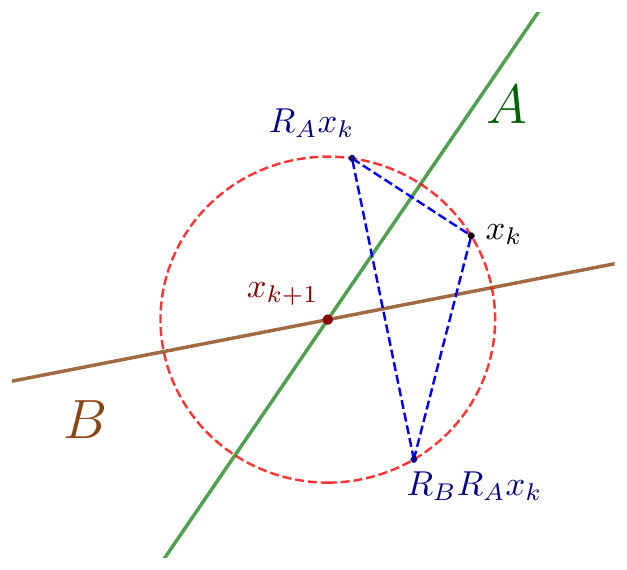}}\hspace{0.1\textwidth}%
\subfigure[AAMR with $\alpha=0.5$, $\beta=0.8$ and $q=(0,0)$\label{fig:AAMR}]{\includegraphics[width=0.4\textwidth]{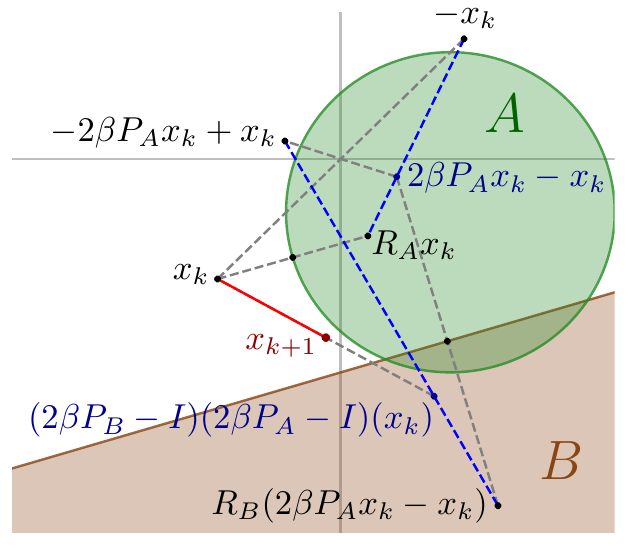}}
\caption{One iteration of GDR, RAAR, CDR and AAMR methods.}\label{fig:DRrelaxed}
\end{figure}

\begin{figure}[ht!]
\centering
\vspace{1.4cm}
\includegraphics[width=0.3\textwidth]{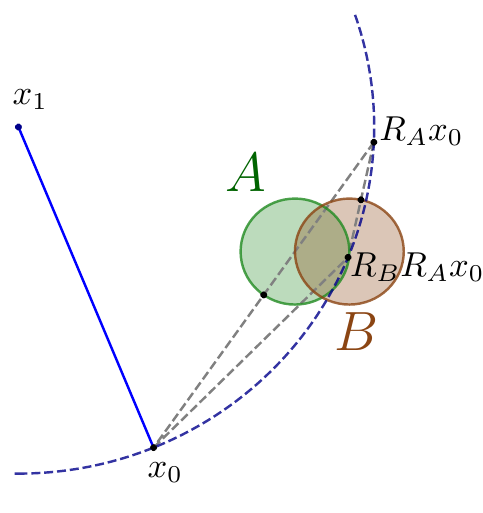}
\includegraphics[width=0.45\textwidth]{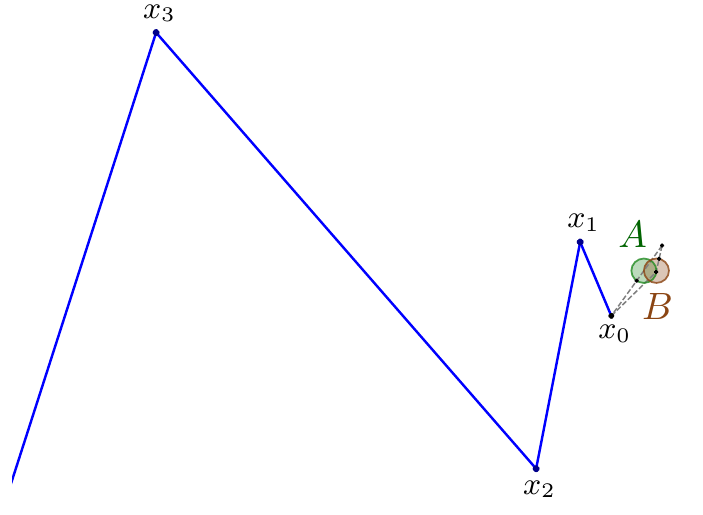}
\caption{Failure of the circumcentered Douglas--Rachford method when applied to two balls of the same radius $A,B\in\R^2$: the sequence $(x_k)_{k=0}^\infty$ diverges. The left figure shows construction of the first iteration from the right figure in more detail.}\label{fig:circ_balls}
\end{figure}

\section{A successful heuristic for nonconvex problems}\label{sec:Nonconvex}

The DR algorithm has recently gained significant popularity, in part, thanks to its good behavior when applied in nonconvex settings. In this context, we note that the DR operator may be multivalued due to the fact that the projection onto a nonconvex set is not necessarily unique. Consequently, the equality in~\eqref{eq:DRiterationth} must be replaced by an inclusion, and the iteration thus takes the form
\begin{equation}\label{eq:NonconvexDR}
x_{k+1}\in T_{A,B}(x_k):=\left\{x_k+b_k-a_k\in \Eu: a_k\in P_A(x_k), b_k\in P_B(2a_k-x_k)\right\}.
\end{equation}
The fixed point set of this operator remains useful for solving the feasibility problem without assuming convexity of the sets $A$ and $B$. This is explained in the following straightforward exercise which  generalizes \Cref{prop:DRfixedpoints}.
\begin{exercise}[Fixed points of the DR operator]\label{exercise:DRfixedpoints-set}
	Let $A,B\subseteq\Eu$ be nonempty and closed, and let $T_{A,B}$ be the DR operator given by \eqref{eq:NonconvexDR}. Prove that
	\begin{equation*}
		\Fix T_{A,B}\neq\emptyset \quad\iff\quad A\cap B\neq\emptyset,
	\end{equation*}
	and that, for any $x\in\Fix T_{A,B}$, there exists a point $p\in P_A(x)$ such that $p\in A\cap B$.
\end{exercise}

Despite the fact that convergence of the algorithm is generally only ensured when applied to convex sets, the method has nevertheless been successfully employed for solving a variety of nonconvex optimization problems, especially those of combinatorial nature. Examples of such applications include matrix completion~\citep{ABTmatrix}, protein conformation determination~\citep{BT17}, phase and bit retrieval~\citep{BCL02,ElserPhase,ElserBit}, differential equations~\citep{LLS17}, graph coloring~\citep{Graphcol,Graphcol2}, combinatorial designs~\citep{Designs}, and a wide variety of NP-hard problems such as Sudoku or 3-Satisfiability~\citep{ABTcomb,Elser,schaad2010modeling}.

The nonconvex theory for the DR algorithm is much less developed; there are very few results explaining why the algorithm works, and even less justifying its good global performance. The first nonconvex scenario was considered by \cite{BS11}, who established local convergence of the method near each of the intersection points of a line and a sphere in a Euclidean space. An explicit description of the regions of convergence was later provided by \cite{ABglobal}. It was finally \cite{benoist} who, via the construction of a Lyapunov function, established the convergence of the algorithm for every starting point not lying on the hyperplane of symmetry. Lyapunov functions are a powerful tool from difference inclusions whose existence guarantees the convergence of the iteration. By using this approach, \cite{DT18} proved global convergence of the DR algorithm for finding a zero of a function, with applications to several nonconvex feasibility problems.

From a different perspective, \cite{ABT16} proved global convergence for the case of a halfspace and a finite set. We make use of this scenario to illustrate in~\Cref{fig:DR_AP_nonconvex} the difference between the behavior of DR and cyclic projections when addressing combinatorial problems. While the method of cyclic projections usually gets stuck in those points which are close to be solutions (as it finds a local best approximation pair), DR is satisfactorily capable to escape from them (thanks to~\Cref{th:DR}\Cref{th:DR_II}). Even so, the DR algorithm does not break free from getting caught by cycles in other nonconvex settings. Although this is something that does not seem to happen very often, it may be hard to detect. It is worth mentioning that the cycling of the algorithm for a simple inconsistent nonconvex feasibility problem, specifically, a hyperplane and a doubleton, was recently analyzed by \cite{BDLdoubleton}.

\begin{figure}[ht!]
\centering
\subfigure[Cyclic projections\label{fig:APnonconvex}]{\includegraphics[width=0.35\textwidth]{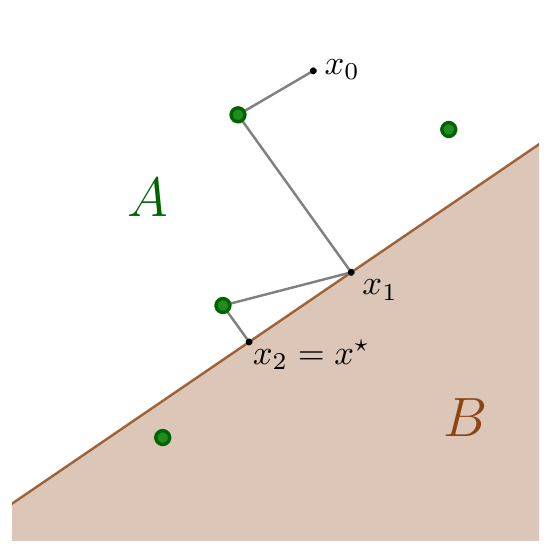}}
\hspace{0.1\textwidth}
\subfigure[Douglas--Rachford\label{fig:DRnonconvex}]{\includegraphics[width=0.35\textwidth]{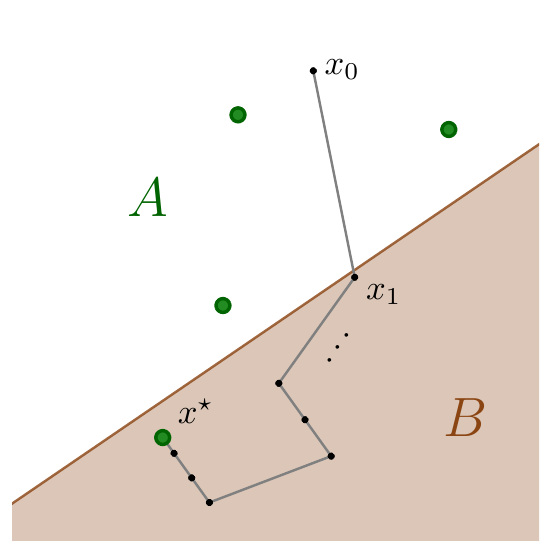}}
\caption{The method of cyclic projections and DR applied to a finite set (the set of green points) and a halfspace.}\label{fig:DR_AP_nonconvex}
\end{figure}

Local convergence of the algorithm (i.e., convergence for starting points sufficiently close to a solution) in nonconvex settings has been established, for instance, for the case of a line and an ellipse or a $p$-sphere~\citep{BLSSS18}, and for union of convex sets~\citep{BNlocal,tam2018algorithms,dao2019union}. Other results regarding local convergence are usually obtained by requiring regularity properties of the sets and/or of their intersection, see e.g.~\cite{BD17,HLnonconvex,HLNsparse,Plinear}.%

\subsection{Application to a generalized ``8-queens problem''}\label{sec:queens}
In this section, we use the DR algorithm to solve a generalization of the so-called \emph{$8$-queens problem}, which asks for the placement of $8$ mutually non-attacking queens on an $8\times 8$ chessboard. Recall that, in chess, queens can move any number of squares along a vertical, horizontal or diagonal of the board. For this reason, two or more queens are said to be \emph{attacking} if they lie in the same vertical, horizontal or diagonal.

The $8$-queens puzzle therefore consists in placing $8$ queens so that each row and column contain precisely one queen, and each diagonal contains at most one. The problem we consider here generalizes this puzzle in two ways. Firstly, we consider $n\times n$ chessboards for an arbitrary positive integer $n$, and secondly, we consider placing $mn$ queens ($1\leq m\leq n$) so that there are precisely $m$ queens in each row and column, and each diagonal contains at most $m$. We refer to the general problem as the \emph{$(m,n)$-queens problem}. For the sake of clarity, we shall restrict our exposition to the $m=2$ case, however the results easily generalize to the $m>2$ case. An example of a solution to the $(2,8)$-queens problem is shown in~\Cref{fig:8Q}.

For the case when $m=1$, the $(1,n)$-queens problem is known as the \emph{$n$-queens problem}.
In his master's thesis, \cite{schaad2010modeling} successfully demonstrated that the DR method can be used to solve the $n$-queens problem.

\begin{figure}[htb]
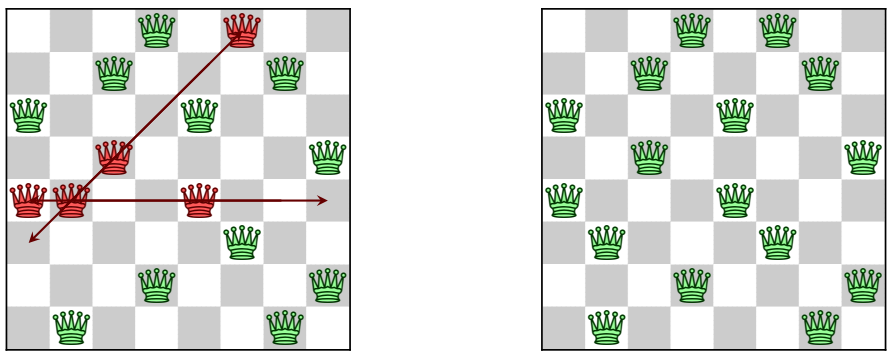

	\centering
	{\setchessboard{showmover=false}
		\newgame
		\chessboard[%
		maxfield=h8,%
		boardfontencoding=LSBC3,
		boardfontsize=16pt,%
		labelfontsize=6pt,%
		whitefieldcolor=white,%
		whitefieldmaskcolor=white,%
		blackfieldmaskcolor=black!20!white,%
		blackfieldcolor=black!20!white,%
		whitepiececolor=green!20!black,%
		whitepiecemaskcolor=green!40!white,%
		blackpiececolor=red!70!white,%
		blackpiecemaskcolor=red!70!white,%
		setfontcolors,%
		label=false,%
		margin=false,%
	    setwhite={Qb4, Qe4, Qc5, Qf8, Qa4, Qa6, Qb1, Qc7, Qd8, Qd2, Qe6, Qf3, Qg7, Qg1, Qh5, Qh2},%
        coloremph,
        whitepiecemaskcolor=red!65!white,
        whitepiececolor=black!65!red,
        empharea=a4-e5,
        empharea=f8-f8,
	    pgfstyle=straightmove,%
	    arrow=stealth,%
	    linewidth=.1ex,%
	    padding=0ex,%
	    color=red!40!black,%
	    pgfstyle=straightmove,%
	    shortenstart=2ex,%
	    markmoves={a3-f8,f8-a3,h4-a4,a4-h4}
	    ]}\hspace{0.15\textwidth}
	    {\setchessboard{showmover=false}
		\newgame
		\chessboard[%
		maxfield=h8,%
		boardfontencoding=LSBC3,
		boardfontsize=16pt,%
		labelfontsize=6pt,%
		whitepiececolor=green!20!black,%
		whitepiecemaskcolor=green!40!white,%
		whitefieldmaskcolor=white,%
		blackfieldmaskcolor=black!20!white,%
		blackfieldcolor=black!20!white,%
		setfontcolors,%
		label=false,%
		margin=false,%
	    setwhite={Qa6, Qa4, Qb3, Qb1, Qc7, Qc5, Qd8, Qd2, Qe6, Qe4, Qf8, Qf3, Qg7, Qg1, Qh5, Qh2},%
	    pgfstyle=straightmove,%
	    arrow=stealth,%
	    linewidth=.1ex,%
	    padding=0ex,%
	    color=green!40!black,%
	    pgfstyle=straightmove,%
	    shortenstart=2ex
	    ]}

\caption{Two  configurations of 16 queens on an $8\times 8$ chessboard. The left board is not a solution to the $(2,8)$-queens problem as one row and one backward diagonal both have three queens (red). In contrast, the right board is a solution.}\label{fig:8Q}
\end{figure}

To model queens on a chessboard as a feasibility problem, consider a matrix $X\in \Bnn$. We interpret the entries of this matrix as squares on a chessboard, with non-zero entries corresponding to queens. The $(2,n)$-queens problem can thus be described in terms of the following constraints:

\begin{itemize}
\item[1.] Exactly two entries in each row of $X$ are non-zero.
\item[2.] Exactly two entries in each column of $X$ are non-zero.
\item[3.] At most two entries in each forward diagonal of $X$ are non-zero.
\item[4.] At most two entries in each backward diagonal of $X$ are non-zero.
\end{itemize}

We now express the problem mathematically using these constraints. To this end, observe that a matrix $X\in\Rnn$ is a solution to the $(2,n)$-queens problem if and only if its entries are contained in $\bin$ and it belongs to the following constraint sets

\begingroup
\allowdisplaybreaks
\begin{subequations}\label{eq:n3l_const}
 	\begin{align}
 	C_1  &:= \left\{ X\in \Rnn :  \sum_{i=1}^nx_{ij}= 2,\,\text{for all } j=1,\ldots,n.\right\}, \label{eq:C1} \\
 	C_2  &:= \left\{ X\in \Rnn  : \sum_{j=1}^nx_{ij}= 2,\,\text{for all } i=1,\ldots,n. \right\}, \label{eq:C2} \\
 	C_3  &:= \left\{X\in \Rnn  : \sum_{i=1}^{n-k}x_{i,i+k}\leq 2 \text{ and } \sum_{j=1}^{n-k}x_{j+k,j}\leq 2,\,\text{for all } k=0,1,\ldots,n-3.\right\}, \label{eq:C3} \\
 	C_4  &:= \left\{ X\in \Rnn  :  \sum_{j=1}^{n-k}x_{n-j-k,j}\leq 2 \text{ and }  \sum_{i=1}^{n-k}x_{n-i,i+k}\leq 2,\,\text{for all } k=0,1,\ldots,n-3.\right\}. \label{eq:C4}	
 	\end{align}
\end{subequations}
\endgroup
Now, even with these constraints sets fixed, there are still many possible formulations which model the desired feasible set.  Let us denote $\widehat{C}_i:=C_i\cap \Bnn$ for each $i=1,2,3,4$. We shall concern ourselves with the following four formulations:

\begin{subequations}
\begin{align}
   \text{Find } X & \in {C}_1\cap {C}_2\cap {C}_3\cap {C}_4\cap \Bnn \tag{Formulation 1}\label{For1}\\
    & = {C}_1\cap {C}_2\cap \widehat{C}_3\cap \widehat{C}_4 \tag{Formulation 2}\label{For2}\\
    & = \widehat{C}_1\cap \widehat{C}_2\cap {C}_3\cap {C}_4 \tag{Formulation 3}\label{For3}\\
   & = \widehat{C}_1\cap \widehat{C}_2\cap \widehat{C}_3\cap \widehat{C}_4. \tag{Formulation 4}\label{For4}
\end{align}
\end{subequations}

In order to apply any of the algorithms previously presented, we need to be able to compute the projections onto each of these sets. Note that a projection of any $X\in\Rnn$ onto $\Bnn$, $\pi_{\Bnn}(X)\in P_{\Bnn}(X)$, can be easily computed component-wise as
\begin{equation*}
\left(\pi_{\Bnn}(X)\right)_{ij}=\left\{\begin{array}{ll}
1, & \text{if } x_{ij}>0.5 ,\\
0, & \text{otherwise}, \end{array}\right.\quad\text{for } i,j=1,2,\ldots,n.
\end{equation*}
In order to derive explicit formulae to compute projections onto the remaining sets, consider, for each $p=1,2,\ldots,n$, the sets

\begin{gather*}
S_{p}:=\left\{x\in \R^p : \sum_{i=1}^px_i= 2\right\} \quad \text{ and } \quad H_{p}:=\left\{x\in \R^p : \sum_{i=1}^px_i\leq 2\right\},
\end{gather*}
where we use the notation $x=(x_1,x_2,\ldots,x_p)$. Similarly as before, let us denote $\widehat{S}_p:=S_p\cap \bin^p$ and $\widehat{H}_p:=H_p\cap \bin^p$. Although $S_p$ and $H_p$ are, respectively, a hyperplane and a halfspace and thus, convex, the sets $\widehat{S}_p$ and $\widehat{B}_p$ are discrete. The constraint sets in~\eqref{eq:n3l_const}, as well as their discrete counterparts, can be represented in terms of these four sets. The following propositions provide specific formulae for computing a projection onto each of these sets.

\begin{proposition}[Projectors onto $S_p$ and $H_p$]\label{prop:projH}
Let $x=(x_1,x_2,\ldots,x_p)\in\R^p$. Then the projectors onto the sets $S_{p}$ and $H_p$ are given component-wise by
\begin{align*}
\left(P_{S_{p}}(x)\right)_i&=x_i+\frac{1}{p}\left(2-\sum_{j=1}^p x_j\right) , \quad \text{for } i=1,2,\ldots,p;\\
\left(P_{H_{p}}(x)\right)_i&=x_i+\frac{1}{p}\min\left\{0,2-\sum_{j=1}^p x_j\right\}, \quad \text{for } i=1,2,\ldots,p.
\end{align*}
\end{proposition}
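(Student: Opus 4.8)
The plan is to observe that $S_p$ and $H_p$ are, respectively, the hyperplane and the halfspace determined by the all-ones normal vector $a := (1,1,\ldots,1)\in\R^p$ and the right-hand side $b := 2$. Since $\|a\|^2 = p$ and $a^Tx = \sum_{j=1}^p x_j$, both asserted formulae are then immediate specializations of the hyperplane and halfspace projection formulae \eqref{eq:proj_hyperplane} and \eqref{eq:proj_halfspace} recorded in \Cref{example:linear_systems}. To keep the argument self-contained I would instead verify the two candidate expressions directly against the variational characterization of the projector onto a closed convex set provided by \Cref{fact:projection}, namely that $\hat p = P_C(x)$ if and only if $\hat p\in C$ and $\langle c-\hat p,\, x-\hat p\rangle\le 0$ for all $c\in C$.

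For $S_p$, denote by $\hat p$ the claimed point, with $\hat p_i = x_i + \frac1p\bigl(2-\sum_j x_j\bigr)$. First I would check $\hat p\in S_p$ by summing its components, which gives $\sum_i \hat p_i = \sum_i x_i + (2-\sum_j x_j) = 2$. Next, observing that $x-\hat p = \frac1p\bigl(\sum_j x_j - 2\bigr)\mathbf 1$ is a scalar multiple of $\mathbf 1$, while every $c\in S_p$ satisfies $c-\hat p\in\{\mathbf 1\}^\perp$ (both $c$ and $\hat p$ have component-sum $2$), the inner product $\langle c-\hat p,\, x-\hat p\rangle$ vanishes identically. By \Cref{fact:projection} this forces $\hat p = P_{S_p}(x)$. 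Equivalently, one may invoke the affine-subspace refinement of \eqref{eq:projection} with equality.

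For $H_p$ I would split into two cases according to the sign of $2-\sum_j x_j$. If $\sum_j x_j\le 2$ then $x\in H_p$, so $P_{H_p}(x) = x$, which matches the formula since the $\min$ term is then $0$. If $\sum_j x_j > 2$, the candidate $\hat p$ coincides with $P_{S_p}(x)$ computed above and lies on the boundary hyperplane $S_p\subseteq H_p$; to confirm it is the projection I would again use \Cref{fact:projection}. Here $x-\hat p = \frac1p\bigl(\sum_j x_j - 2\bigr)\mathbf 1$ with a strictly positive scalar, and for any $c\in H_p$ one has $\langle c-\hat p,\, \mathbf 1\rangle = \sum_i c_i - 2\le 0$; hence $\langle c-\hat p,\, x-\hat p\rangle = \frac1p\bigl(\sum_j x_j - 2\bigr)\bigl(\sum_i c_i - 2\bigr)\le 0$, as required.

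The computations are routine, so there is no serious obstacle; the only point deserving care is the case split for $H_p$ --- specifically, the geometric fact that when $x\notin H_p$ its nearest point lies on the bounding hyperplane --- which is handled cleanly by the sign bookkeeping inside the characterization rather than by any separate a priori argument.
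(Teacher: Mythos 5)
Your proposal is correct, and its opening observation is, in essence, the paper's entire proof: the paper simply notes that $S_p=\{x\in\R^p:e^Tx=2\}$ and $H_p=\{x\in\R^p:e^Tx\leq 2\}$ with $e=(1,1,\ldots,1)$, $\|e\|^2=p$, and then cites the hyperplane and halfspace formulae \eqref{eq:proj_hyperplane} and \eqref{eq:proj_halfspace} from \Cref{example:linear_systems}. Where you diverge is in declining to rest on those cited formulae and instead verifying the two candidate points directly against the variational characterization in \Cref{fact:projection}: for $S_p$, the orthogonality argument (the residual $x-\hat p$ is parallel to $e$ while $c-\hat p$ is orthogonal to $e$ for every $c\in S_p$, so the inner product vanishes) is correct, and for $H_p$ the sign split — the trivial case $x\in H_p$, and otherwise the factorization $\langle c-\hat p, x-\hat p\rangle=\tfrac{1}{p}\bigl(\sum_j x_j-2\bigr)\bigl(\sum_i c_i-2\bigr)\leq 0$ — is also correct. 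What your longer route buys is self-containedness: the formulae \eqref{eq:proj_hyperplane} and \eqref{eq:proj_halfspace} are only asserted in \Cref{example:linear_systems}, with their verification relegated to the exercise on affine projectors, so your argument discharges a dependency that the paper's one-line proof leaves open; the paper's version, in exchange, is shorter and reuses results already stated. Either proof is acceptable.
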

\begin{proof}
Observe that  $S_{p}=\left\{x\in\R^p : e^T x =  2\right\}$ and $H_{p}=\left\{x\in\R^p : e^T x\leq 2\right\}$ with $e:=(1,1,\ldots,1)\in\R^p$. Thus, the result follows directly from~\eqref{eq:proj_hyperplane} and \eqref{eq:proj_halfspace}.
\end{proof}

\begin{proposition}[Projections onto $\widehat{S}_p$ and $\widehat{H}_p$]\label{prop:projB}
Let $x=(x_1,x_2,\ldots,x_p)\in\R^p$. Then a projection onto the sets $\widehat{S}_p$ and $\widehat{H}_p$, $\pi_{\widehat{S}_p}(x)\in P_{\widehat{S}_p}(x)$ and $\pi_{\widehat{H}_p}(x)\in P_{\widehat{H}_p}(x)$, can be computed component-wise as
\begin{align}
\left(\pi_{\widehat{S}_p}(x)\right)_i &=\left\{\begin{array}{ll}
1, & \text{if } i\in Q_2(x),\\
0, & \text{otherwise}, \end{array}\right.\quad\text{for } i=1,2,\ldots,p, \label{eq:piBS}\\
\left(\pi_{\widehat{H}_p}(x)\right)_i &=\left\{\begin{array}{ll}
1, & \text{if } i\in Q_2(x) \text{ and } x_i>0.5 ,\\
0, & \text{otherwise}, \end{array}\right.\quad\text{for } i=1,2,\ldots,p;\label{eq:piBH}
\end{align}
where $Q_2(x)$ is the set of indices in $\{1,2,\ldots,p\}$ corresponding to the two largest values in $\{x_1,x_2,\ldots,x_p\}$ (largest index is chosen in case of tie).
\end{proposition}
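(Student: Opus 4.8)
The plan is to reduce the projection---which is the minimization of the quadratic $\|x-c\|^2$ over a finite set of binary vectors---to a purely combinatorial problem of selecting an optimal index set, and then to solve that problem by a greedy exchange argument.

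First I would exploit the fact that the candidate points $c$ lie in $\{0,1\}^p$, so that $c_i^2=c_i$ for every $i$. Expanding the squared norm then gives, for all $c\in\{0,1\}^p$,
\[
\|x-c\|^2=\sum_{i=1}^p(x_i-c_i)^2=\sum_{i=1}^p x_i^2-\sum_{i=1}^p c_i(2x_i-1).
\]
Since $\sum_i x_i^2$ is independent of $c$, minimizing $\|x-c\|^2$ over any subset $C\subseteq\{0,1\}^p$ is equivalent to \emph{maximizing} the linear functional $g(c):=\sum_{i=1}^p c_i(2x_i-1)$ over $C$. As $\widehat{S}_p$ and $\widehat{H}_p$ are finite and nonempty (the former provided $p\geq 2$, the latter always, since the zero vector lies in it), a maximizer exists and the projection is well defined, albeit possibly set-valued.

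Next I would solve the two combinatorial problems by identifying $c$ with the index set $I:=\{i:c_i=1\}$, so that maximizing $g$ amounts to maximizing $\sum_{i\in I}(2x_i-1)$. For $\widehat{S}_p$ the constraint $\sum_i c_i=2$ forces $|I|=2$; since $2x_i-1$ is increasing in $x_i$, the sum is maximized by taking $I$ to be the two indices carrying the largest entries of $x$, namely $I=Q_2(x)$, which yields \eqref{eq:piBS}. For $\widehat{H}_p$ the constraint is merely $|I|\leq 2$, so one is free to discard any index; the sum is maximized by including an index precisely when it contributes positively, i.e.\ $2x_i-1>0$ (equivalently $x_i>0.5$), subject to a budget of two slots that one fills with the largest available entries. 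Hence the optimal $I$ consists of the indices that are simultaneously among the two largest and satisfy $x_i>0.5$, giving \eqref{eq:piBH}. In both cases optimality is confirmed by a short exchange argument: if an optimal $I$ omitted a top-two index whose value strictly exceeds (and, in the $\widehat{H}_p$ case, is positive compared with) that of some included index, swapping the two would strictly increase $g$.

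I expect the delicate points to be bookkeeping rather than conceptual. The main one is the treatment of ties and of the strict versus non-strict threshold: when two entries coincide, several index sets attain the maximum, so the projector is genuinely set-valued and the stated formulae merely select one valid representative (consistent with the stated ``largest index in case of tie'' convention); similarly an index with $x_i=0.5$ contributes exactly zero to $g$, so including or excluding it in the $\widehat{H}_p$ case is immaterial, which is why the strict inequality $x_i>0.5$ still produces a correct projection. Verifying that these boundary choices indeed land inside the optimal set, and that the greedy selection is globally optimal via the exchange argument, is the step requiring the most care.
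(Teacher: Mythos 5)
Your proposal is correct, and it reaches the same formulae by a cleaner route than the paper. The paper's proof first enumerates the elements of the two sets explicitly ($\widehat{S}_p=\{e_i+e_j: i\neq j\}$ and $\widehat{H}_p=\{0_p\}\cup\{e_i\}\cup\{e_i+e_j : i\neq j\}$), then compares squared distances candidate-by-candidate: for $\widehat{S}_p$ this yields the characterization $e_{i_0}+e_{j_0}\in P_{\widehat{S}_p}(x) \iff x_i\leq x_{i_0}$ and $x_i\leq x_{j_0}$ for all $i$, while for $\widehat{H}_p$ it requires a three-way case distinction (zero vector, one unit vector, sum of two unit vectors), each case producing its own inequality characterization of membership in the projection. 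Your linearization $\|x-c\|^2=\sum_i x_i^2-\sum_i c_i(2x_i-1)$, valid because $c_i^2=c_i$ on $\bin^p$, collapses all of this into a single cardinality-constrained linear maximization solved greedily, which in particular absorbs the paper's case distinction for $\widehat{H}_p$ into one uniform argument; the threshold $x_i>0.5$ emerges naturally as the sign of the coefficient $2x_i-1$ rather than being discovered case by case. What you give up is mild: the paper's pairwise comparisons deliver a full characterization of the (generally set-valued) projectors, i.e.\ \emph{all} projections, whereas your greedy argument exhibits one valid representative --- but that is all the proposition asserts, and your discussion of ties and of the indifference at $x_i=0.5$ correctly accounts for the set-valuedness. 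Your observation that $\widehat{S}_p$ requires $p\geq 2$ to be nonempty is a detail the paper passes over silently.
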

\begin{proof}
Let $e_1,\ldots,e_p$ denote the unit vectors of the standard basis of $\R^p$ and note that
\begin{align*}
\widehat{S}_p&=\{e_i+e_j : i,j=1,2,\ldots,p,\, i\neq j\},\\
\widehat{H}_p&=\{0_p\}\cup \{e_i : i=1,2,\ldots,p\} \cup \{e_i+e_j : i,j=1,2,\ldots,p,\, i\neq j\}.
\end{align*}
Since the sets $\widehat{S}_p$ and $\widehat{H}_p$ contain a finite number of elements, they are clearly closed nonempty sets. To characterize the projector onto $\widehat{S}_p$ observe that
\begin{align}
e_{i_0}+e_{j_0} \in P_{\widehat{S}_p}(x) \quad  \iff \quad & \|x-e_{i_0}-e_{j_0}\|\leq \|x-e_i-e_j\|, \quad \text{for all } i,j\in\{1,2,\ldots,p\}\text{~with~}i\neq j,\nonumber\\[1ex]
\iff \quad &  \|x-e_{i_0}-e_{j_0}\|^2\leq \|x-e_i-e_j\|^2, \quad \text{for all } i,j\in\{1,2,\ldots,p\}\text{~with~} i\neq j,\nonumber\\[1ex]
\iff \quad & \|x\|^2+2-2x^Te_{i_0}-2x^Te_{j_0}\leq \|x\|^2+2-2x^Te_{i}-2x^Te_{j},\nonumber\\
& \text{for all } i,j\in\{1,2,\ldots,n\}, i\neq j,\nonumber\\[1ex]
\iff \quad & x_i+x_j\leq x_{i_0}+x_{j_0}, \quad \text{for all } i,j\in\{1,2,\ldots,n\}, i\neq j,\nonumber\\[1ex]
\iff \quad & x_i\leq x_{i_0} \text{ and } x_i\leq x_{j_0}, \quad \text{for all } i\in\{1,2,\ldots,n\}.\label{eq:PBS}
\end{align}
By using a case distinction and following analogous reasoning, we can easily obtain that
\begin{subequations}\label{eq:PBH}
\begin{align}
0_p \in P_{\widehat{H}_p}(x) \quad & \iff \quad x_i\leq \frac{1}{2}, \text{ for all } i\in\{1,2,\ldots,p\},\label{eq:PBn2_0}\\[1ex]
e_{i_0} \in P_{\widehat{H}_p}(x) \quad & \iff \quad x_j\leq\frac{1}{2}\leq x_{i_0}, \text{ for all } j\in\{1,2,\ldots,p\}\setminus\{i_0\};\label{eq:PBn2_1}\\[1ex]
e_{i_0}+e_{j_0} \in P_{\widehat{H}_p}(x) \quad & \iff \quad \begin{array}{c}
\text{with~}\frac{1}{2}\leq x_{i_0}, \quad \frac{1}{2}\leq x_{j_0},\\[1ex]
x_i\leq x_{i_0},\,\, x_i\leq x_{j_0}, \text{ for all } i\in\{1,2,\ldots,p\}\setminus\{i_0,j_0\}.
\end{array}\label{eq:PBn2_2}
\end{align}
\end{subequations}
Finally, observe that \eqref{eq:piBS} and \eqref{eq:piBH} are directly derived from \eqref{eq:PBS} and \eqref{eq:PBH}, respectively.
\end{proof}

\begin{remark}[On the multivaluedness of the projectors]
In contrast to~\Cref{prop:projH}, where the projectors onto $S_p$ and $H_p$ were given, we only use the qualifier ``a projection'' in~\Cref{prop:projB}. Since the sets $\widehat{S}_p$ and $\widehat{H}_p$ are nonconvex, the projections may not be unique. Multivaluedness occurs when either there is a tie in the largest components of the vector to be projected, or when some of them are exactly $0.5$. Nonetheless, none of these situations will happen very often in practice, due to the finite precision of the computations, specially if the algorithm is run from a randomly generated starting point.
\end{remark}

Observe that the constraint sets $C_1$ and $C_2$ (resp. $\widehat{C}_1$ and $\widehat{C}_2$) determine that each row and each column of $X$, respectively, lays in the set $S_n$ (resp. $\widehat{S}_n$). On the other hand,  $C_3$ and $C_4$ (resp. $\widehat{C}_3$ and $\widehat{C}_4$) determine that each forward and each backward diagonal of $X$, respectively, belongs to $H_p$ (resp. $\widehat{H}_p$) for some $p\in\{1,2,\ldots,n\}$. Thus, we can compute projections onto all these sets by using~\Cref{prop:projH} (resp.~\Cref{prop:projB}).

\begin{remark}[Fixed points and solutions]\label{rem:fixedP_solutions}
As demonstrated in~\Cref{fig:DR_AP_nonconvex}, the method of cyclic projections often fails to be useful when applied to combinatorial problems as there exist fixed points of the operator which do not provide a solution. Consider, for instance, one iteration of this method, for the $(2,3)$-queens problem generated with Formulation~\hyperref[For3]{3} as

{\arraycolsep=3pt
\renewcommand{\arraystretch}{1}
$$
X_0:=\left( \begin{array}{ccc}
0 & 1 & 0\\
1 & 1 & 1\\
1 & 0 & 1\\
\end{array} \right)\xrightarrow[]{P_{\widehat{C}_1}} \left( \begin{array}{ccc}
0 & 1 & 1\\
0 & 1 & 1\\
1 & 0 & 1\\
\end{array} \right)\xrightarrow[]{P_{\widehat{C}_2}} \left( \begin{array}{ccc}
0 & 1 & 0\\
1 & 1 & 1\\
1 & 0 & 1\\
\end{array} \right)\xrightarrow[]{P_{{C}_3}} \left( \begin{array}{ccc}
0 & 1 & 0\\
1 & 1 & 1\\
1 & 0 & 1\\
\end{array} \right)\xrightarrow[]{P_{{C}_4}} \left( \begin{array}{ccc}
0 & 1 & 0\\
1 & 1 & 1\\
1 & 0 & 1\\
\end{array} \right)=X_0.
$$}%
Here we remark that projections onto $\widehat{C}_1$ and $\widehat{C}_2$ are computed using~\Cref{prop:projB}, so in case of tie between the two largest components of a row or a column, we choose those ones of largest indexes. The point $X_0$ is therefore a fixed point in the sense that
$$X_0\in P_{{C}_4}P_{{C}_3}P_{\widehat{C}_2}P_{\widehat{C}_1}(X_0).$$
However, $X_0$ is not a solution to the problem since the second row contains three queens.

The same problem occurs when we use a cyclic variant of the DR algorithm. For instance, we have
$$ Y_0\in T_{[\widehat{C}_1,\widehat{C}_2,C_3,C_4]}(Y_0) \quad \text{and} \quad Y_0\in T_{\widehat{C}_1,[\widehat{C}_2,C_3,C_4]}(Y_0),\quad \text{for } Y_0:=\left( \begin{array}{ccc}
0 & 1 & 1\\
1 & 1 & 0\\
0 & 1 & 1\\
\end{array} \right),$$
where $T_{[\widehat{C}_1,\widehat{C}_2,C_3,C_4]}$ and $T_{\widehat{C}_1,[\widehat{C}_2,C_3,C_4]}$ are the cyclic and the cyclically anchored DR operators defined in~\eqref{eq:DRoperator_cyclic} and~\eqref{eq:DRoperator_anchored}, respectively. When we use a cyclic DR method, the point that solves the problem is not the fixed point itself, but its projections onto any of the constraint sets. Observe that in this case
$$P_{\widehat{C}_1}(Y_0)=P_{{C}_3}(Y_0)=P_{C_4}(Y_0)=\{Y_0\} \quad \text{and} \quad P_{\widehat{C}_2}(Y_0)\cap \widehat{C}_1=\emptyset,$$
so a solution cannot be obtained from $Y_0$. The existence of this undesired type of fixed points cannot happen to the DR algorithm in the product space, thanks to \Cref{exercise:DRfixedpoints-set}.
\end{remark}

\paragraph{Numerical experiment.} We performed a numerical experiment to compare the behavior of the DR algorithm on the four formulations presented for the $(2,n)$-queens problem. Note that all the feasibility problems considered above are described by more than two sets, so we have to turn to the product space. We shall not present the results of running the method of cyclic projections and the other versions of the DR algorithm presented in \Cref{subsec:ProductSpace}, because all of them are highly unsuccessful in finding a solution, mainly due to~\Cref{rem:fixedP_solutions}. Therefore, for each of the four formulations defined by $r$ sets, we use the following implementation:

\begin{itemize}
\item \emph{Initialization}:~Generate a random vector $y\in\Bnn$ and set $x_{1,0}:=y,x_{2,0}:=y,\ldots,x_{r,0}:=y$.
\item \emph{Iteration}:~For $k=0, 1, 2,\ldots,$ compute $p_k$ and update $x_{1,k+1}, x_{2,k+1},\ldots, x_{r,k+1}$ according to~\eqref{eq:DR_productspace}.
\item \emph{Termination}:~The algorithm is terminated if either a solution is found or a maximum time of 300 seconds has been reached. In order to test if the current iterate provides a solution, we pointwise round the current shadow point, $p_k$, to the nearest integer and check if this integer point satisfies all constraints.
\end{itemize}
All implementations were coded in Python~2.7 and run on an Intel Core i7-4770 CPU \@3.40 GHz with 16 GB RAM running Windows 10 (64-bit).

Our experiments were performed with chessboards of size $n$ for $n\in \{10, 20, \ldots, 100\}$. For each size and each formulation, the DR algorithm was run from $20$ random starting points. The results of the experiment are summarized in~\Cref{fig:8Qresults}. We observe that Formulations \hyperref[For1]{1} and \hyperref[For2]{2} both performed poorly, only being able to solve small sized boards. Formulation~\hyperref[For4]{4} performed well, however Formulation~\hyperref[For3]{3} was the best with at most one failed instance for all sizes. Again, we wish to emphasize that it is quite remarkable that the DR method works here at all, let alone so well, given the absence of any sound theoretical justification for this behavior.

\begin{figure}[hbt]
	\centering
	\includegraphics[width=0.8\textwidth]{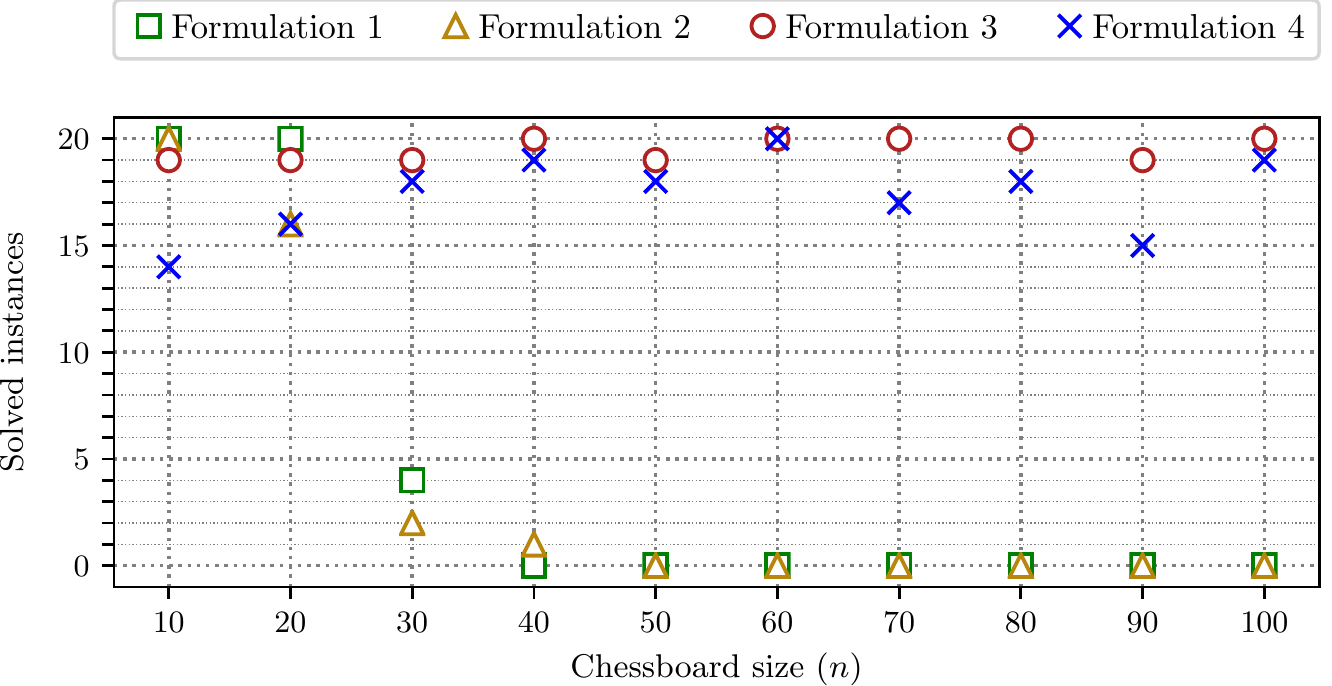}\\[3ex]
	\includegraphics[width=0.8\textwidth]{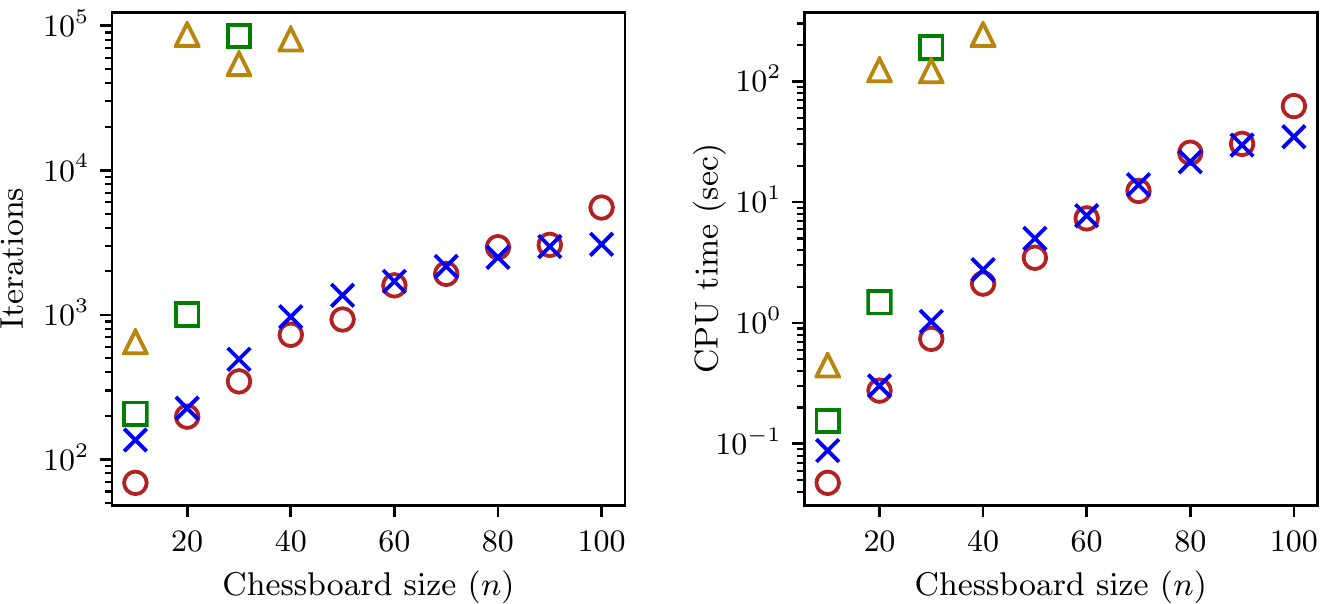}
	\caption{Results from the $(2,n)$-queens experiment. The top figure shows the number of instances (out of 20) solved by DR in less than $300$ seconds. The bottom figures plots the average number of iterations (left) and average CPU time (right) over the solved instances for each formulation and each board size.}\label{fig:8Qresults}
\end{figure}

\section{The infinite-dimensional case}\label{sec:infinite}
In order to avoid the topological technicalities associated with working in an infinite dimensional space, we have so far restricted our setting to a Euclidean space (i.e., a finite dimensional Hilbert space). Nevertheless, most of the presented theory on fixed-point iterations and projection algorithms remain valid in a potentially infinite-dimensional Hilbert space, $\Hi$, with \emph{strong convergence} of sequences replaced by \emph{weak convergence}.

\begin{definition}[Weak/strong convergence]
Consider a sequence $(x_k)_{k=0}^\infty\subset\Hi$ and a point $x^\star\in\Hi$. Then $(x_k)_{k=0}^\infty\subset\Hi$ is said to:
\begin{enumerate}
\item \emph{converge strongly} to $x^\star$, denoted $x_k\to x^\star$, if $\|x_k-x^*\|\to 0$ as $k\to\infty$.
\item \emph{converge weakly} to $x^\star$, denoted $x_k\wto x^\star$, if
\begin{equation}\label{eq:weak convergence}
\langle x_k,z\rangle \to \langle x^\star,z\rangle\text{~~for all~} z\in\Hi.
\end{equation}
\end{enumerate}
\end{definition}
In general, strong convergence implies weak convergence, whereas the opposite is false. In fact, the \emph{Kadec--Klee property} (\Cref{exercise:kadecklee}) gives a precise contain between the two notions. Moreover, when $\Hi$ is finite dimensional, both notions coincide, as can be seen by applying \eqref{eq:weak convergence} with $z$ equal to each element of an orthonormal basis for the space.

\begin{exercise}[Kadec--Klee property]\label{exercise:kadecklee}
 Consider a sequence $(x_k)\subseteq\Hi$ and a point $x^\star\in\Hi$. Show that $x_k\to x^\star$ if and only if $x_k\wto x^\star$ and $\|x_k\|\to\|x^\star\|$.
\end{exercise}	

The main technical difficulty arising from weak convergence is that projectors onto closed and convex sets are generally not weakly (sequentially) continuous, as the following example shows. Note there is no ambiguity here in using the term ``closed'' without qualification for convex sets, since a convex set is weakly (sequentially) closed if and only if it is strongly (sequentially) closed \cite[Theorem~3.34]{BC17}.

\begin{example}[Projectors are not weakly continuous {\cite[Example~4.20]{BC17}}]\label{example:weak cont}
Consider the infinite dimensional Hilbert space  of square summable real sequences
 $$\mathcal{H}:=\ell_2(\mathbb{N})=\left\{x=(x_i)_{i=1}^\infty\subset \R:\|x\|^2=\sum_{i=1}^\infty x_i^2<+\infty\right\},$$
endowed with the inner product $\langle x,y\rangle=\sum_{i=1}^\infty x_iy_i$.  Denote its closed unit ball by $\B:=\{x\in\mathcal{H}:\|x\|\leq 1\}$. Consider the bounded sequence $(e^k)_{k=2}^\infty\subset\mathcal{H}$ where, for all $k\geq 1$, the sequence $e^k=(e^k_i)_{i=1}^\infty\subset \R$ is given by
  $$e^k_i := \begin{cases} 1 & \text{if~}i\in\{1,k\}, \\
                          0 & \text{otherwise.} \\ \end{cases}$$
Then $e^k\wto e^1$ as $k\to\infty$ and $P_\B(e^1)=e^1$. However, $ P_\B(e^k) = \frac{1}{\sqrt{2}}e^k\wto \frac{1}{\sqrt{2}}e^1 \neq P_\B(e^1)$.\qede
\end{example}
\begin{exercise}
Verify the details of \Cref{example:weak cont}. That is, show that $e^k\wto e^1$  but $P_\B(e^k)\not\wto P_\B(e^1)$.
\end{exercise}

As pointed out in \Cref{rem:DR_shadow}, when applying the DR algorithm, the sequence of interest is the shadow sequence $(P_A(x_k))_{k=0}^{\infty}$. For the reasons given in
\Cref{example:weak cont}, weak convergence of this sequence cannot be directly derived from weak converge of $(x_k)_{k=0}^{\infty}$, since the projector $P_A$
may not be weakly sequentially continuous. Nevertheless, \cite{Svaiter} proved that the shadow sequence is indeed weakly convergent to $P_A(x^\star)$, where $x^\star$ denotes the weak limit of $(x_k)_{k=0}^{\infty}$. A simpler proof of this result was later provided by~\cite{Ba13} by means of a \emph{demiclosedness principle}.

\subsection{A non-negative moment problem}\label{example:moments}
Consider the infinite dimensional Hilbert space of square-integrable real-valued functions on the interval $[a,b]$ given by
$$\Hi:=L^2([a,b])=\left\{x:[a,b]\to\R: \int_{a}^bx(t)^2dt<\infty\right\},$$
endowed with the inner product $\langle x,y\rangle=\int_{a}^bx(t)y(t)dt$.
Suppose that we are interested in finding a probability density function $x\in\Hi$ with mean $\mu$ and variance $\sigma^2$. Hence, we aim to find a non-negative function $x\in\Hi$ that satisfies
the underdetermined linear system
$$\langle x,1\rangle=1,\quad \langle x,t\rangle=\mu\quad \text{and}\quad \langle x,t^2\rangle=\sigma^2+\mu^2.$$
Defining $C_i:=\left\{ x\in \Hi: \langle x,t^{i-1}\rangle=c_i\right\}$ for $i=1,2,3$ with $c_1 :=1$, $c_2:=\mu$ and $c_3:=\sigma^2+\mu^2$, and $C_4:=\{x\in\Hi: x\geq 0\}$, we observe that the problem is an infinite dimensional modification of \cref{example:linear_systems} (where we have added the nonnegativity constraint). The projectors onto $C_i$ with $i=1,2,3$ are given by
$$P_{C_i}(x)(t)=x(t)+\left(c_i-\langle t^{i-1},x\rangle\right)\frac{t^{i-1}}{\|t^{i-1}\|^2}=x(t)+\left(c_i-\int_{a}^bt^{i-1}x(t)dt\right)\frac{(2i-1)t^{i-1}}{b^{2i-1}-a^{2i-1}},$$
whereas
$$P_{C_4}(x)(t)=\max \{0, x(t)\}.$$

In \Cref{fig:moments_a,fig:moments_b,fig:moments_c,fig:moments_d}, we show the result of applying four of the projection methods presented in \cref{subsec:ProductSpace} to the problem of finding a probability density function defined on $[0,1]$, with mean $\mu=\frac{1}{2}$ and variance $\sigma^2=\frac{1}{20}$, using $x_0=1$ as starting point for all the algorithms. These computations were performed symbolically, without discretizing the problem.

\begin{figure}[ht!]
\centering
\subfigure[Method of cyclic projections]{\label{fig:moments_a}
\includegraphics[width=0.43\textwidth]{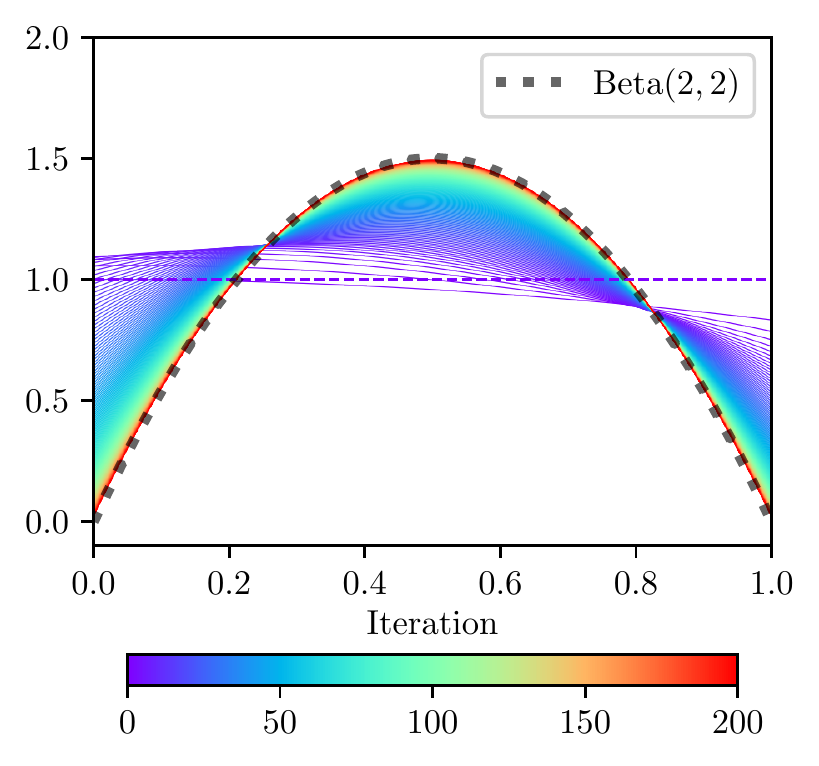}}
\hspace{0.001\textwidth}
\subfigure[Cyclic Douglas--Rachford] {\label{fig:moments_b}
\includegraphics[width=0.43\textwidth]{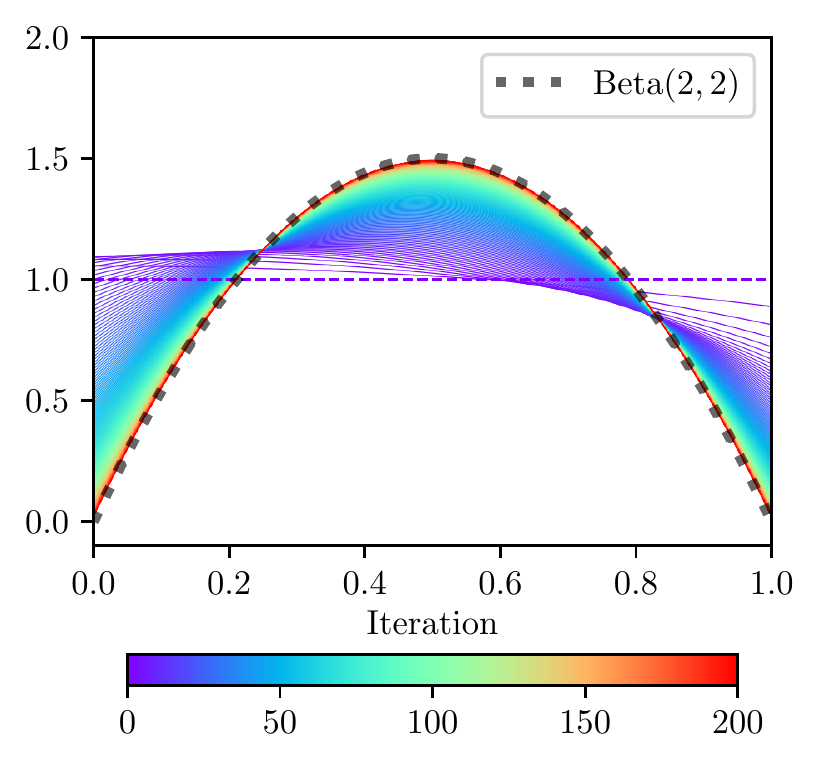}}
\subfigure[Cyclically anchored Douglas--Rachford] {\label{fig:moments_c}
\includegraphics[width=0.43\textwidth]{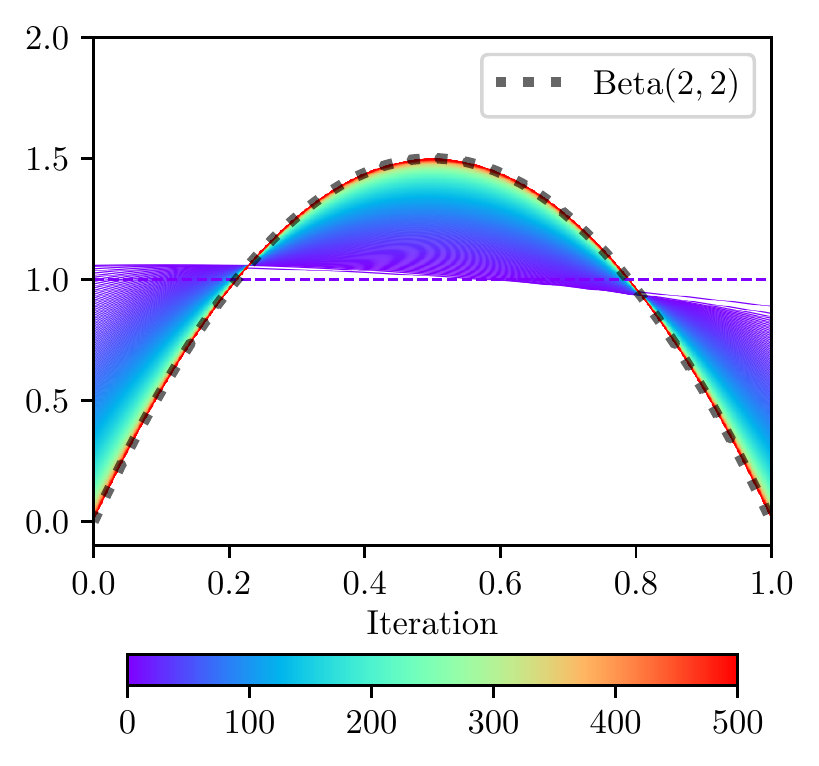}}
\hspace{0.001\textwidth}
\subfigure[Douglas--Rachford in the product space] {\label{fig:moments_d}
\includegraphics[width=0.43\textwidth]{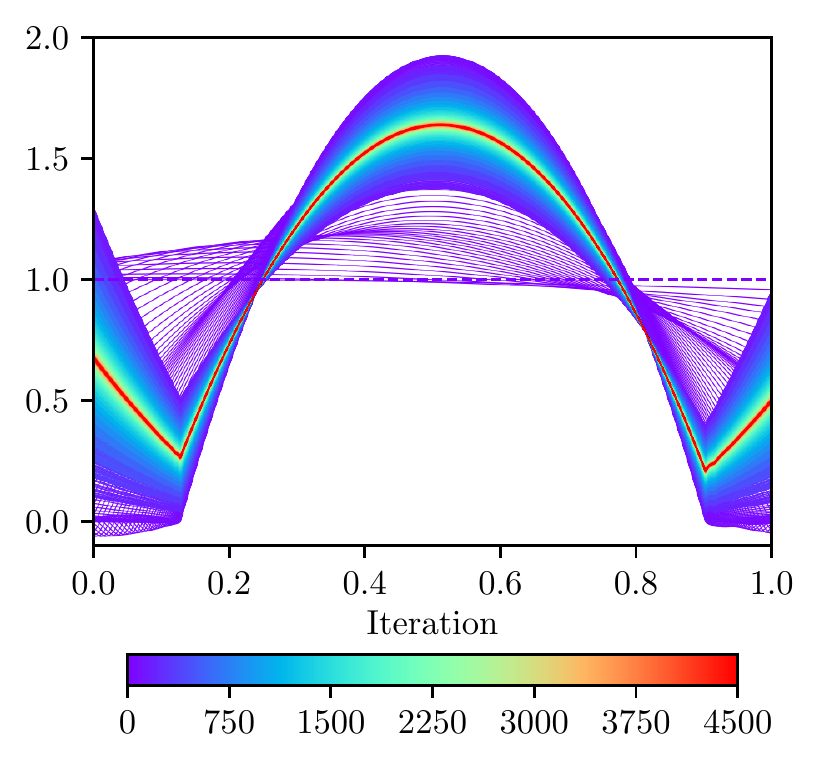}}
\subfigure[Method of cyclic projections] {\label{fig:moments_e}
\includegraphics[width=0.43\textwidth]{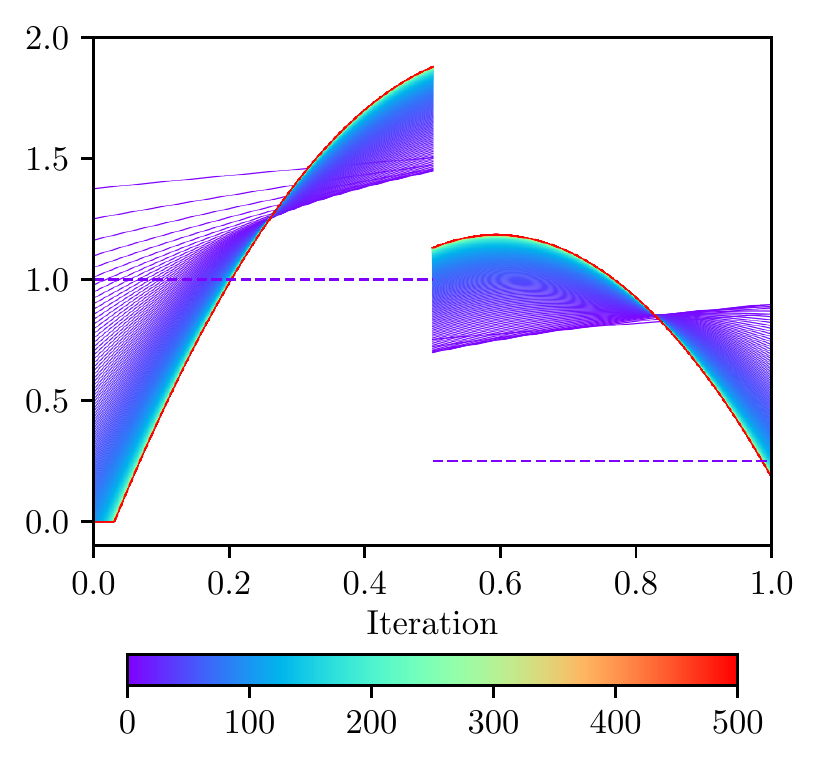}}
\hspace{0.001\textwidth}
\subfigure[Averaged Alternating Modified Reflections] {\label{fig:moments_f}
\includegraphics[width=0.43\textwidth]{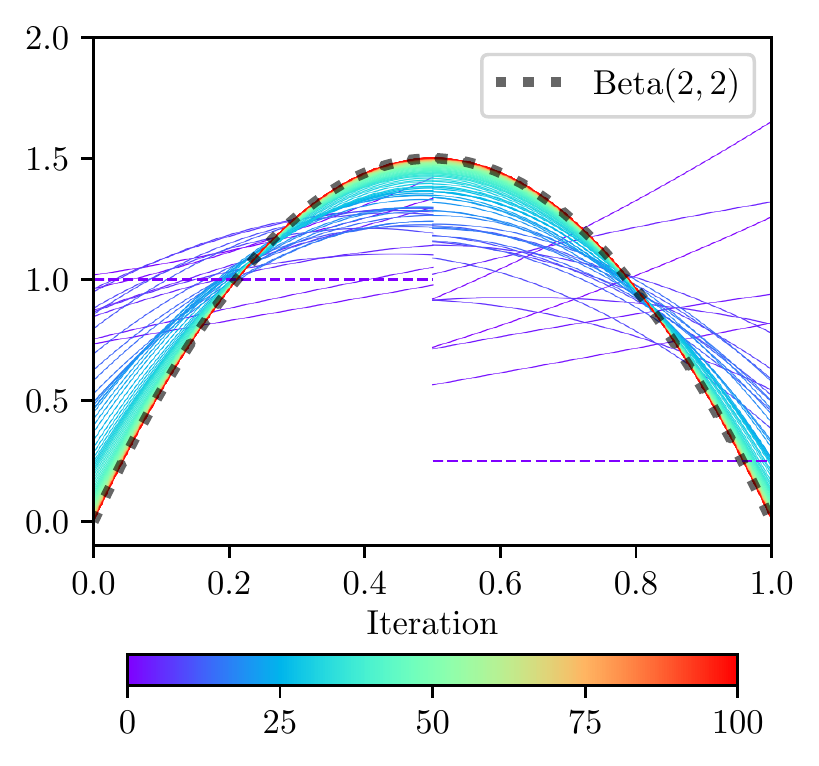}}
\caption{Sequences generated by various projection algorithms for the non-negative moment problem with $a=0$, $b=1$, $\mu=1/2$ and $\sigma^2=1/20$. The starting point (function) is represented with a dashed line.}\label{fig:Moments}
\end{figure}

On one hand, we clearly observe that the method of cyclic projections, the cyclic DR algorithm, and the cyclically anchored DR algorithm all converge to a Beta distribution with parameters $(2,2)$. Of these three methods, the cyclically anchored DR algorithm was the slowest. On the other hand, we realize that the behavior of the sequence generated by the DR algorithm in the product space is completely different to that of the other methods: the algorithm converges to a different (nonsmooth) solution to the problem. As it was also (by far) the slowest algorithm among the four, the DR algorithm in the product space seems to be the worst choice for this problem. Note none of these four algorithms guarantee strong convergence in this infinite dimensional setting.

In \cref{fig:moments_e}, we show the result of running the method of cyclic projections from a different starting point, given by
$$\widetilde{x}_0(t):=\begin{cases} 1 & \text{if~}0\leq t\leq \frac{1}{2}, \\
                          \frac{1}{4} & \text{if~}\frac{1}{2}< t\leq 1. \\ \end{cases}$$
This time, the method converges to a different (discontinuous) solution. In \cref{fig:moments_f}, we present the result of running the averaged alternating modified reflections method in the product space, with $q=0$ and parameters $\alpha=\beta=0.95$, using $\widetilde{x}_0$ as a starting point. The sequence converges in much less iterations than the ones generated by the other methods, even when the continuous starting point $x_0=1$ was used. Moreover, in contrast with the other methods, the shadow sequence is now guaranteed to be strongly convergent to a fixed point of the operator whenever the set of fixed points is nonempty \cite[according to][Theorem~5.1]{AAMR}. If a fixed point exists, its projection onto the diagonal $\bs{D}$ would solve the best approximation problem consisting in finding the probability density function on $[0,1]$ of minimum norm with $(\mu,\sigma^2)=\left(\frac{1}{2},\frac{1}{20}\right)$.

We observe in \cref{fig:moments_f} that the shadow sequence converges to the probability density function of a ${\rm Beta}(2,2)$. In \cref{fig:Moments_AAMR} we present the sequences in the product space resulting from running again the averaged alternating modified reflections method from the continuous starting point $x_0=1$.

\begin{figure}[ht!]
\centering
\subfigure[Sequences in the product space] {\includegraphics[height=0.45\textwidth]{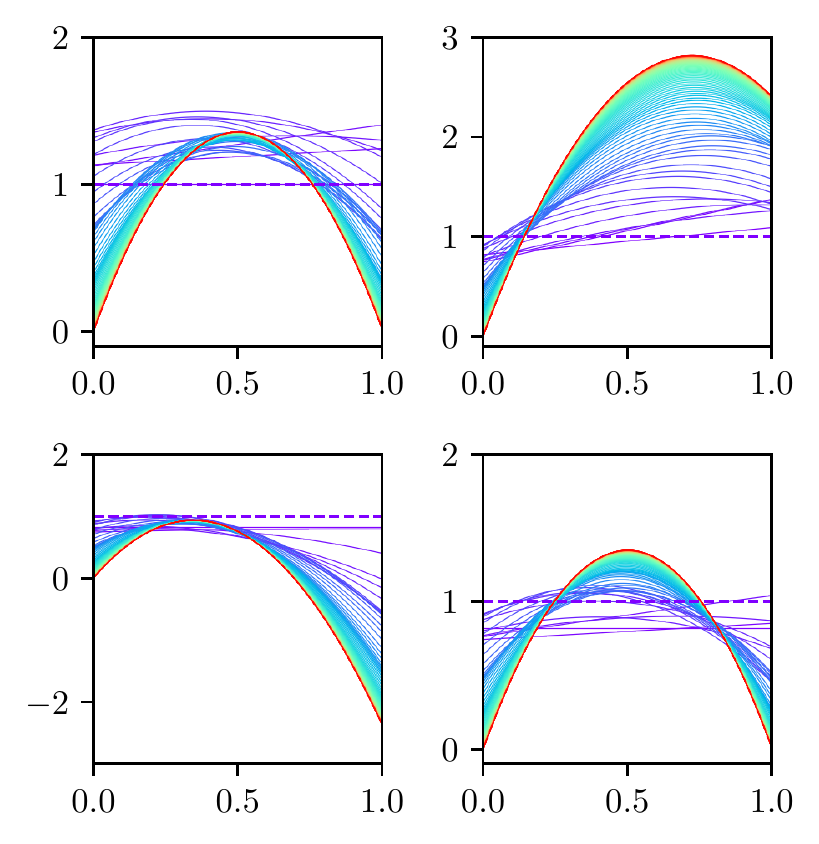}}
\subfigure[Shadow sequence in the diagonal space]{\includegraphics[height=0.45\textwidth]{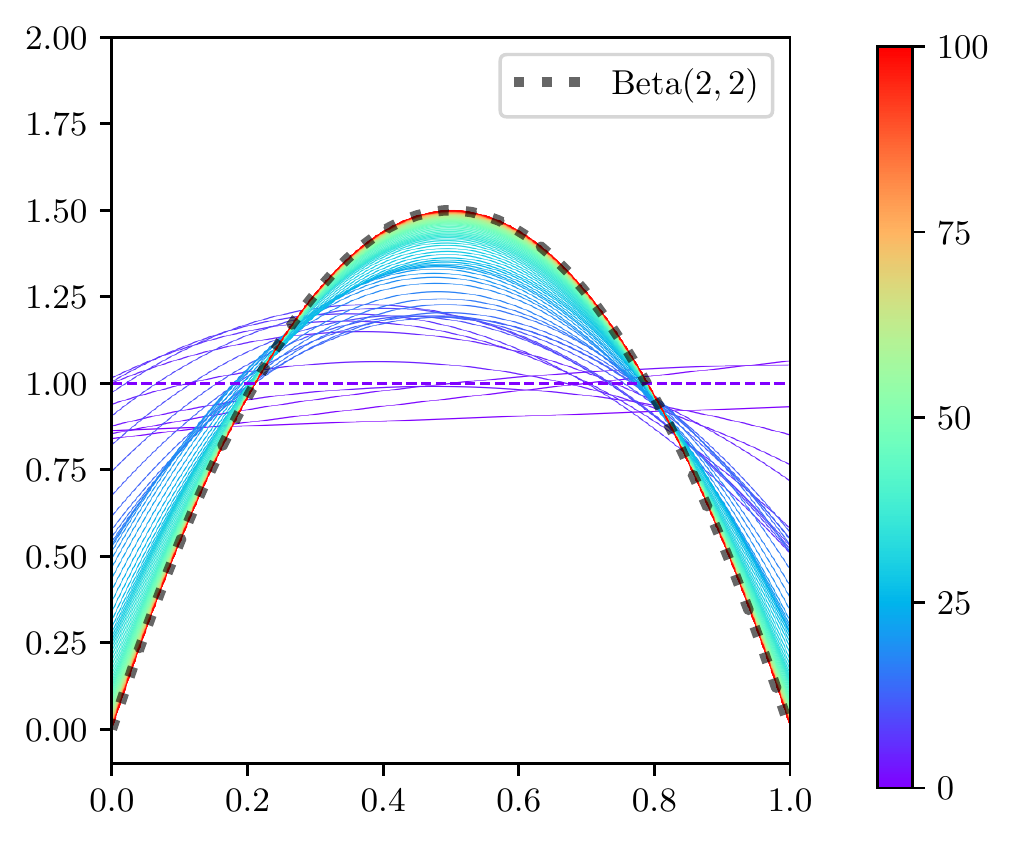}}
\caption{Sequences generated by the averaged alternating modified reflections method in the product space for the non-negative moment problem with $a=0$, $b=1$, $\mu=1/2$ and $\sigma^2=1/20$. The starting point (function) is represented with a dashed line.}\label{fig:Moments_AAMR}
\end{figure}

It is apparent that each of the four sequences generated is convergent to some function in $\Hi$. Looking at the expression of the sequences, it can be observed that these limiting functions are
\begin{align}\label{eq:barx}
 \overline{x}_1(t)&:=\frac{27}{5}t(1-t),\quad \overline{x}_2(t):=\frac{3}{5}t(13-9t),\quad \overline{x}_3(t):=\frac{3}{5}t(9-13t),\quad
 \overline{x}_4(t):=\frac{27}{5}t(1-t).
\end{align}
It can be checked that $\bs{\overline{x}}:=(\overline{x}_1,\overline{x}_2,\overline{x}_3,\overline{x}_4)\in
\bs{\Hi}=\Hi^4$ is a fixed point of the averaged alternating modified reflections operator $T_{\bs{D},\bs{C},0.95,0.95}$ given by~\eqref{eq:AAMR}. Therefore, since
$$P_{\bs{D}}(\bs{\overline{x}})(t)=\frac{\overline{x}_1+\overline{x}_2+\overline{x}_3
+\overline{x}_4}{4}=6t(1-t),$$
which coincides with the probability density function of a ${\rm Beta}(2,2)$, we can conclude that ${\rm Beta}(2,2)$ is the probability density function  of minimum norm in $L^2([0,1])$ with $(\mu,\sigma^2)=\left(\frac{1}{2},\frac{1}{20}\right)$.

\begin{exercise}
Verify that $\bs{\overline{x}}$ given by~\eqref{eq:barx} is a fixed point of $T_{\bs{D},\bs{C},0.95,0.95}$.
\end{exercise}

\section{Conclusions and open questions}\label{sec:conslusions}
In this self-contained tutorial, we introduced the feasibility problem paradigm and developed the convergence theory for projection algorithms within the framework of nonexpansive fixed point theory. The level of abstraction provided by this convergence framework allowed us to easily analyze several well-known projection algorithms in the literature. Particular attention was given to one member from the projection algorithm family, the Douglas--Rachford method, due to its known ability to solve certain combinatorial problems. By way of computational experiments on the $(2,n)$-queens problem, we demonstrated that choosing a good feasibility formulation is absolutely essential for this observed success. Although the product space formulation is a crucial ingredient of the DR method for feasibility problems with more than two constraint sets, we observed that it can actually make the method very slow, even in the convex setting with only four constraints as was considered in \Cref{sec:infinite}. We remark that the Douglas--Rachford projection algorithm, as was presented here, can be viewed as special case of an algorithm (which bears the same name) for finding a zero in the sum of two \emph{maximal monotone operators}. This was, in fact, the algorithm developed by~\cite{LM79}. For a modern treatment, see \cite[\S25.2]{BC17}.

To conclude this tutorial, we propose three open problems suitable as starting points for open-ended research projects, organized in order of decreasing difficulty.
\begin{itemize}
\item \emph{An explicit counterexample for weak convergence} --
As discussed in \Cref{sec:infinite}, in the infinite dimensional setting, the DR method is only known to converge weakly when applied to feasibility problems with convex sets. Despite this fact, to best of the authors' knowledge, there is no explicitly known example of two sets and an initial point for which the DR method converges weakly but not strongly. For the method of cyclic projections, a construction due to \cite{H2004} provides such a counterexample. The goal of this research project is to formulate an explicit counterexample for strong converge of the DR method.

\item \emph{Global convergence of DR for combinatorial problems} --
As stated in \Cref{sec:queens}, it is quite remarkable that the DR method applied to the $(m,n)$-queens problem works so well, especially given the absence of any applicable theory for the method in such settings. The goal of this research project is to find sufficient criteria for global convergence of the DR method involving more realistic nonconvex sets than the ones existing in the literature (e.g., involving the product space reformulation). Knowledge of such conditions would provide insights into general properties of useful feasibility formulations for combinatorial problems.

\item \emph{AAMR for combinatorial problems} -- As demonstrated in \Cref{fig:DR_AP_nonconvex} and \Cref{rem:fixedP_solutions}, most projection algorithms possess fixed points which are not necessary related to the feasibility problem at hand. The DR method however does not suffer this potential shortcoming (\Cref{exercise:DRfixedpoints-set}) and nor does the AAMR discussed in \Cref{sec:AAMR}. The latter suggests the AAMR method as a potentially useful heuristic. The goal of this project is to numerically compare the performance of the AAMR method to the DR method and GDR method on non-convex and combinatorial problems, such as the $(m,n)$-queens problem. A number of additional feasibility formulations for numerical testing can be found in \cite{ABTmatrix,ABTcomb}. Our initial numerical results for the $(2,n)$-queens problem suggest that the AAMR performs quite well (see \Cref{fig:AAMRcomb}).
\end{itemize}

\begin{figure}[bth]
	\centering
	\includegraphics[width=0.8\textwidth]{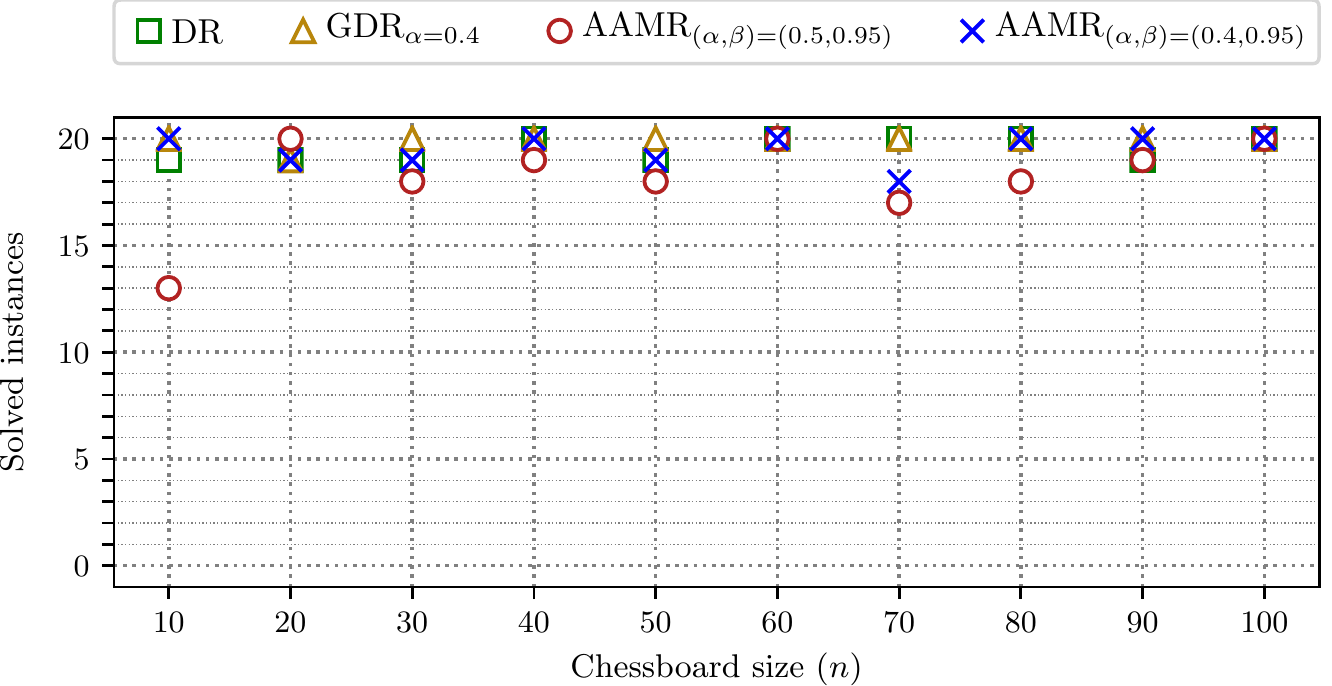}\\[3ex]
	\includegraphics[width=0.8\textwidth]{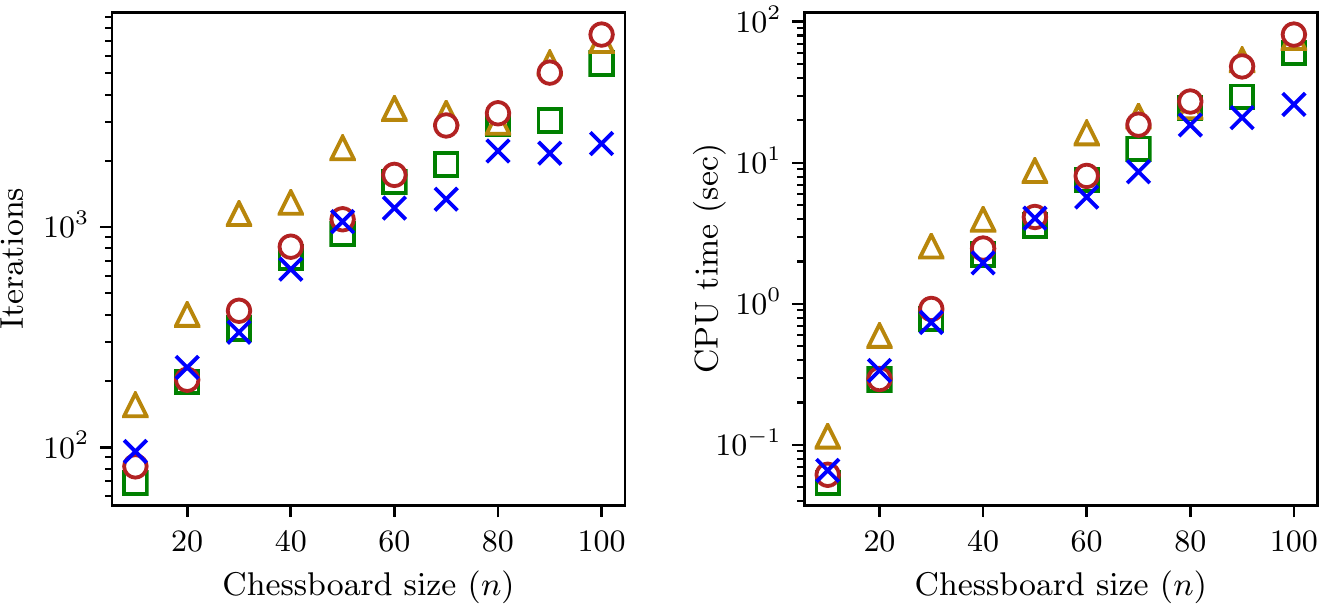}
	\caption{Results comparing DR, GDR and AAMR for the $(2, n)$-queens problem with Formulation~\hyperref[For3]{3}. The top figure shows the number of instances (out of 20) solved in less than $300$ seconds. The bottom figures plots the average number of iterations (left) and average CPU time (right) over the solved instances for each formulation and each board size.}\label{fig:AAMRcomb}
\end{figure}

{\small
\paragraph{Acknowledgments.}
FJAA and RC were partially supported by Ministerio de Econom\'ia, Industria y Competitividad (MINECO) and European Regional Development Fund (ERDF), grant MTM2014-59179-C2-1-P. FJAA was supported by the Ram\'on y Cajal program by MINECO and ERDF (RYC-2013-13327) and RC was supported by MINECO and European Social Fund (BES-2015-073360) under the program ``Ayudas para contratos predoctorales para la formaci\'on de doctores 2015''.}

\bibliographystyle{spbasic}      

\end{document}